\newcommand{\bba}{\mathbb{A}}
\newcommand{\bbas}{\mathbb{A}^\delta}
\newcommand{\nomj}{\mathbf{j}}
\newcommand{\nomk}{\mathbf{k}}
\newcommand{\nomi}{\mathbf{i}}
\newcommand{\cnomm}{\mathbf{m}}
\newcommand{\cnomn}{\mathbf{n}}
\newcommand{\cnomo}{\mathbf{o}}
\newcommand{\jty}{J^{\infty}}
\newcommand{\mty}{M^{\infty}}
\newcommand{\adnote}[1]{\textcolor{blue}{ADD:#1}}
\newcommand{\mpnote}[1]{\textcolor{red}{MP:#1}}
\newcommand{\marginkmnote}[1]{\marginnote{\textcolor{green}{KM:#1}}}
\newcommand{\afnote}[1]{\textcolor{orange}{AF:#1}}
\newcommand{\xwnote}[1]{\textcolor{blue}{XW:#1}}
\newcommand{\pcon}{\mathcal{C}}
\newcommand{\ppcon}{\mathcal{D}}
\newcommand{\npcon}{\cancel{\mathcal{C}}}
\newcommand{\nppcon}{\cancel{\mathcal{D}}}
\newcommand{\wt}{{\rhd}}
\newcommand{\bt}{{\blacktriangleright}}
\newcommand{\tw}{{\lhd}}
\newcommand{\tb}{{\blacktriangleleft}}
\def\aol{\rule[0.5865ex]{1.38ex}{0.1ex}}
\def\pdla{\mbox{\rotatebox[origin=c]{180}{$\,>\mkern-8mu\raisebox{-0.065ex}{\aol}\,$}}}
\newtheorem{theorem}{Theorem}[section]
\newtheorem{lemma}[theorem]{Lemma}
\newtheorem{example}[theorem]{Example}
\newtheorem{definition}[theorem]{Definition}
\newtheorem{corollary}[theorem]{Corollary}
\newtheorem{proposition}[theorem]{Proposition}
\newtheorem{remark}[theorem]{Remark}
\begin{document}
\title{Obligations and permissions on selfextensional logics}

\author[1]{Andrea De Domenico}
\author[2]{Ali Farjami}
\author[1]{Krishna Manoorkar}
\author[1,3]{Alessandra Palmigiano}
\author[1]{Mattia Panettiere}
\author[1,4]{Xiaolong Wang}

\affil[1]{\small School of Business and Economics, Vrije Universiteit Amsterdam, De Boelelaan 1105, Amsterdam, 1081 HV, The Netherlands}
\affil[2]{\small School of Computer Engineering, Iran University of Science and Technology, Tehran, Iran}
\affil[3]{\small Department of Mathematics and Applied Mathematics, University of Johannesburg, Corner Kingsway and University Road, Rossmore, Johannesburg, South Africa}
\affil[4]{\small School of Philosophy and Social Development, Shandong University, South Shanda Road No.27, Jinan, 250100, China}

\date{}

\maketitle
\begin{abstract}
We further develop the abstract algebraic logic approach to input/output logic initiated  in \cite{wollic22}, where the family of selfextensional logics was proposed as a general  background environment for  input/output logics.
In this paper, we introduce and discuss the generalizations of several types of permission (negative, dual negative, static, dynamic), as well as their interactions with normative systems, to various families of selfextensional logics, thereby proposing a systematic approach to the definition of normative and permission systems  on nonclassical propositional bases.\\
{\em Keywords}: input/output logic, selfextensional logics, abstract algebraic logic.
  %
  \end{abstract}

\section*{Declarations}

\subsection*{Competing interests}
    The authors of this study declare that there is no conflict of interest with any commercial or financial entities related to this research.
\subsection*{Authors' contributions}
    Xiaolong Wang drafted the initial version of this article. Other authors have all made equivalent contributions to it.
\subsection*{Funding}
    The authors who affiliated by Vrije Universiteit has received funding from the European Union’s Horizon 2020 research and innovation programme under the Marie Skłodowska-Curie grant agreement No. 101007627.
\\
Xiaolong Wang is supported by the China Scholarship Council No.202006220087.
\\
Krishna Manoorkar is supported by the Nederlandse Organisatie voor Wetenschappelijk Onderzoek grant KIVI.2019.001 awarded to Alessandra Palmigiano.

\newpage

\section{Introduction}

The present paper continues a line of investigation, recently initiated in \cite{wollic22}, which studies  {\em input/output logics} from an  algebraic logic perspective \cite{font2003survey}.   

The framework of {\em input/output logic} \cite{Makinson00} has been introduced for modelling the interaction between logical inferences and other agency-related notions  such as  conditional obligations, goals, ideals, preferences, actions, and beliefs, in the context of the formalization of normative systems in philosophical logic and AI.
Recently, the original framework of input/output logic, based on classical propositional logic, has been generalized  to incorporate various forms of  {\em nonclassical} reasoning \cite{parent2014intuitionistic,stolpe2015concept}, and these generalizations have   contextually motivated the introduction of algebraic and proof-theoretic methods in the study of input/output logic   \cite{sun2018proof,ciabattoni2023streamlining}.
In the present paper, 
the various notions of {\em permission systems} introduced in \cite{Makinson03}, namely {\em negative permission}, {\em positive static permission} and {\em dynamic permission}, are generalized and studied uniformly in the context of {\em selfextensional logics} \cite{wojcicki2003logic}, both in themselves, and in connection with normative systems. In the same context, the notion of {\em dual negative permission} system (cf.~Section \ref{ssec:dual negative}) is introduced and studied.

Selfextensional logics (cf.~Section \ref{ssec:selfextensional}) form a wide class of logical systems which have been intensely studied in  abstract algebraic logic \cite{Jansana2006conjunction,jansana2007implication}, but have also been studied from a duality-theoretic \cite{jansana2006referential} and proof-theoretic \cite{avron2020} perspective. Selfextensional logics  are the logics for which the {\em weak replacement property} holds: substituting any two interderivable formulas for a variable in any formula gives rise to interderivable formulas. In algebraic terms, selfextensionality is equivalently defined as the property that the interderivability  relation be a congruence of the algebra of formulas. Besides classical propositional logic, well known examples of nonclassical logics which are selfextensional are  intuitionistic \cite{intuitionistic}, bi-intuitionistic \cite{Rauszer1974}, (classical and positive) modal \cite{dunn95,chellas80}, substructural \cite{GJKO}, quantum \cite{Chiara02}, linear \cite{Girard}, intermediate \cite{super}, De Morgan \cite{demorgan}, semi De Morgan \cite{sankappanavar} logics. More in general, all logics whose canonically associated classes of algebras are varieties of normal (resp.~regular, monotone) (distributive) lattice  expansions \cite{conradie2019algorithmic,conradie2020constructive}, and in which the entailment relation is captured by the order of the algebras,\footnote{That is, 
letting $\mathsf{Alg}(\mathcal{L})$ denote the  class of algebras canonically associated with a given logic  $\mathcal{L}$, 
if $\varphi$ and $\psi$ are formulas, then $\varphi\vdash \psi$ iff $h(\varphi)\leq h(\psi)$ for every $A\in \mathsf{Alg}(\mathcal{L})$ and every homomorphism $h: \mathrm{Fm}\to A$.} are selfextensional.\footnote{That is, the weak replacement property holds for the consequence relation associated with the standard (e.g.~Hilbert-style) presentation of each of these logics.} We refer to these logics as normal (resp.~regular, monotone) (D)LE-logics \cite{conradie2019algorithmic}. Choosing selfextensional logics as the background environment  allows for a systematic  and principled generalization of the theory of input/output logic to a large family of nonclassical logics, capturing a wide variety of reasoning forms directly relevant in their interaction with norms. For instance, as is well known, intuitionistic and intermediate logics capture forms of reasoning for which truth is {\em constructive} and is identified with {\em provability} \cite{plato08}, while De Morgan and semi De Morgan logics capture {\em paraconsistent} forms of reasoning, which allow e.g.~to reason about inconsistent information without lapsing into absurdity; linear logic captures reasoning about (different types of) resources, while  it has been argued (cf.~\cite{conradie2020non}) that  non-distributive LE-logics  capture forms of {\em hyperconstructive} reasoning, in which truth is {\em evidential}, and also forms of categorical reasoning. Moreover, the specific abstract algebraic logic approach to selfextentional logics allows to abstract away from certain idiosyncratic features relative e.g.~to the way in which a given logic is presented.

\paragraph{Structure of the paper.} In Section \ref{sec:prelim}, we collect basic definitions and facts about  selfextensional logics and their metalogical properties, as well as normative and permission systems.
In Section \ref{sec:normative on selfextensional}, we build on \cite{wollic22} and generalize normative systems and their associated output operators to the context of selfextensional logics; 
in Section \ref{sec:permission systems}, we introduce, discuss, and study the properties of negative, dual negative, positive static, and positive dynamic permission systems in the context of selfextensional logics. 
We conclude in Section \ref{sec:conclusions}.

 \section{Preliminaries}

\label{sec:prelim}
The present section 
collects preliminaries on  
selfextensional logics (cf.~Section \ref{ssec:selfextensional}),  and on input/output logics (cf.~Section \ref{ssec: normative systems CPL}) based on these. 
\subsection{Selfextensional logics and metalogical properties}\label{ssec:selfextensional}
\paragraph{Logics as consequence relations.} Abstract algebraic logic \cite{font2003survey} takes the notion of logical entailment rather than theoremhood as primary. Consequently, a {\em logic} is defined as a tuple 
$\mathcal{L}= (\mathrm{Fm}, \vdash)$ such that $\mathrm{Fm}$ is the term algebra (in a given algebraic or logical signature) over a set $\mathsf{Prop}$ of atomic propositions, and $\vdash$ is a {\em consequence relation} on $\mathrm{Fm}$, i.e.~$\vdash$ is a relation between sets of formulas and formulas  such that\footnote{In the literature, (cf.~\cite{font2017general})  consequence relations are typically  required to be also closed under substitution of atomic propositions, that is, for any $\Gamma \cup \{\varphi\} \subseteq \mathrm{Fm}$, if $\Gamma \vdash \varphi$ then $\{\sigma(\gamma)\ |\ \gamma \in \Gamma\}\vdash \sigma(\varphi)$, where $\sigma: \mathrm{Fm} \mapsto \mathrm{Fm}$ is an endomorphism. Another common requirement is compactness, that is, whenever  $\Gamma\vdash \varphi$, then $\Gamma'\vdash \varphi$ for some finite subset $\Gamma'\subseteq \Gamma$. }, for all $\Gamma, \Delta\subseteq \mathrm{Fm}$ and all $\varphi\in \mathrm{Fm}$,

\begin{itemize}
\item [(a)] if $\varphi\in \Gamma$ then $\Gamma\vdash \varphi$; 
\item [(b)] if  $\Gamma\vdash \varphi$ and $\Gamma \subseteq \Delta$, then $\Delta\vdash \varphi$; 
\item [(c)] if $\Delta\vdash \varphi$ and $\Gamma\vdash \psi$ for every $\psi\in \Delta$, then $\Gamma\vdash \varphi$.

\end{itemize} 

Clearly,  any such relation $\vdash$ induces a preorder on $\mathrm{Fm}$, which we still denote $\vdash$, 
by restricting to singletons. 
Let ${\equiv} \subseteq\, \mathrm{Fm}\times \mathrm{Fm}$  denote the equivalence relation induced by the preorder $\vdash$;   that is,  the interderivability relation $\equiv$ is defined by $\varphi\equiv \psi$ iff $\varphi\vdash \psi$ and $\psi\vdash \varphi$.
A logic $\mathcal{L}$ is {\em selfextensional} (cf.~\cite{wojcicki2003logic}) if for any $\varphi, \psi\in \mathrm{Fm}$, if $\varphi\equiv \psi$ then $\delta(\varphi/p)\equiv \delta(\psi/p)$ for every $\delta\in \mathrm{Fm}$. 

\paragraph{Consequence relations and closure operators.} Consequence relations (and hence logics defined as indicated above) can be equivalently presented by means of closure operators:\footnote{For any poset $P$, a map $C: P\to P$ is a {\em closure operator} if, for all $x, y\in P$: (a) $x\leq C(x)$; (b) $x\leq y$ implies $C(x)\leq C(y)$; (c) $C(C(x))\leq C(x)$.}
 let $Cn(\Gamma): = \{ \psi \mid \Gamma \vdash \psi \}$ denote the {\em theory} of $\Gamma$ for any $\Gamma\subseteq \mathrm{Fm}$.\footnote{In what follows, we write e.g.~$Cn(\varphi)$ for $Cn(\{\varphi\})$, and $Cn(\Gamma, \varphi)$ for $Cn(\Gamma\cup\{\varphi\})$.} The set $Cn(\varnothing)$ collects the {\em theorems} of $\mathcal{L}$.\footnote{Different consequence relations might have the same set of theorems, one example being the local and the global consequence relations induced by the class of Kripke frames on the language of classical modal logic.} If $Cn(\varnothing) = \mathrm{Fm}$, then $\mathcal{L}$ is {\em inconsistent}.
The assignment $\Gamma\mapsto Cn(\Gamma)$ defines a closure operator $Cn(-): (\mathcal{P}(\mathrm{Fm}), \subseteq )\to (\mathcal{P}(\mathrm{Fm}), \subseteq )$, and conversely, for any such closure operator $C$, the relation $\vdash_C\ \subseteq \mathcal{P}(\mathrm{Fm})\times \mathrm{Fm}$ defined as $\Gamma\vdash_C \varphi$ iff $\varphi\in C(\Gamma)$ is a consequence relation on $\mathrm{Fm}$. Finally, if $\vdash$ is a consequence relation, $\vdash \ = \ \vdash_{C_\vdash}$, where $C_\vdash$ denotes the closure operator associated with $\vdash$, and if $C$ is a closure operator on $(\mathcal{P}(\mathrm{Fm}), \subseteq )$, then $C \ =\  C_{\vdash_C}$.

\paragraph{Metalogical properties.}
\label{ssec:metalogical properties}
 Taking the notion of consequence relation as primary in defining a logical system allows one to abstract away from specific features of the presentation of a logic, and specifically, from any concrete logical signature. However, as is customary in  abstract algebraic logic literature, the familiar logical connectives such as conjunction, disjunction,  and implication can be reintroduced in terms of their behaviour w.r.t.~the consequence relation of the given logic. This gives rise to {\em metalogical properties} of the closure operator associated with the consequence relation of given logics.  In what follows, we collect the best-known  metalogical properties, capturing the abstract behaviour of conjunction, disjunction, and implication (cf.~\cite{font2003survey,font2017general}), but also other less well-known properties e.g.~those which capture the behaviour of co-implication, negation, and co-negation. In particular, we model  the metalogical properties of negation along the lines of the axiomatic hierarchy presented in \cite{almeida09}.

\begin{enumerate}
    \item The {\em conjunction property} ($\wedge_P$) holds for $\mathcal{L}$ if a term $t(x, y)$ (which we denote $x\wedge y$) exists in the language of $\mathcal{L}$   such that $Cn(\varphi\wedge \psi) = Cn(\{\varphi, \psi\})$ for all $\varphi, \psi\in \mathrm{Fm}$. 
    \item The {\em disjunction property} ($\vee_P$) holds for $\mathcal{L}$ if a term $t(x, y)$ (which we denote $x\vee y$) exists in the language of $\mathcal{L}$   such that $Cn(\varphi\vee \psi) = Cn(\varphi)\cap Cn( \psi)$ for all $\varphi, \psi\in \mathrm{Fm}$. 
    \item The {\em strong disjunction property} ($\vee_S$) holds for $\mathcal{L}$ if a term $t(x, y)$ (which we denote $x\vee y$) exists in the language of $\mathcal{L}$   such that $Cn(\Gamma, \varphi\vee \psi) = Cn(\Gamma,\varphi)\cap Cn(\Gamma, \psi)$ for all $\varphi, \psi\in \mathrm{Fm}$ and every $\Gamma\subseteq \mathrm{Fm}$. 
    \item The {\em bottom property} ($\bot_P$) holds for $\mathcal{L}$ if a term $t$ (which we denote $\bot$) exists in the language of $\mathcal{L}$ such that $Cn(\bot) = \mathrm{Fm}$.
    \item The {\em weak top property} ($\top_W$) holds for $\mathcal{L}$ if a term $t$ (which we denote $\top$) exists in the language of $\mathcal{L}$ such that $\top\in Cn(\varphi)$ for every $\varphi\in \mathrm{Fm}$.
    \item The {\em top property} ($\top_P$) holds for $\mathcal{L}$ if a term $t$ (which we denote $\top$) exists in the language of $\mathcal{L}$ such that $Cn(\top) = Cn(\varnothing)$.
    \item The {\em weak negation property} ($\neg_W$) holds for $\mathcal{L}$ if a term $t(x)$ (which we denote $\neg x$) exists in the language of $\mathcal{L}$   such that $\psi \in Cn(\varphi)$ implies $\neg \varphi \in Cn(\neg \psi)$ for any $\varphi, \psi \in \mathrm{Fm}$. For any logic $\mathcal{L}$ with $\neg_W$,

    \begin{enumerate}
    \item The {\em right-involutive negation property} ($\neg_{Ir}$) holds for $\mathcal{L}$ if   $Cn(\neg\neg \varphi) \subseteq Cn( \varphi)$ for any $\varphi\in \mathrm{Fm}$.
    \item The {\em left-involutive negation property} ($\neg_{Il}$) holds for $\mathcal{L}$ if   $Cn( \varphi) \subseteq Cn(\neg\neg \varphi)$ for any $\varphi\in \mathrm{Fm}$.
    \item The {\em involutive negation property} ($\neg_I$) holds for $\mathcal{L}$ if both $\neg_{Il}$ and $\neg_{Ir}$ hold for $\mathcal{L}$.
    \item The {\em absurd negation property} ($\neg_A$) holds for $\mathcal{L}$ if    $Cn( \varphi, \neg \varphi) = \mathrm{Fm}$ for any $\varphi\in \mathrm{Fm}$.
    \item The {\em pseudo negation property} ($\neg_P$) holds for $\mathcal{L}$ if  $\wedge_P$ holds for $\mathcal{L}$, and moreover,   $\neg \psi \in Cn(\varphi, \neg(\varphi \wedge \psi))$ for any $\varphi, \psi \in \mathrm{Fm}$.
    \item The {\em excluded middle property} ($\sim_A$) holds for $\mathcal{L}$ if    $Cn( \varphi) \cap Cn(\neg \varphi) = Cn(\varnothing)$ for any $\varphi\in \mathrm{Fm}$.
    \item The {\em pseudo co-negation property} ($\sim_P$) holds for $\mathcal{L}$ if  $\vee_P$ holds for $\mathcal{L}$, and moreover,   $\varphi \vee \neg (\varphi \vee \psi) \in Cn(\neg \psi)$ for all $\varphi, \psi \in \mathrm{Fm}$. 
    \item The {\em strong negation property} ($\neg_S$) holds for $\mathcal{L}$ if $Cn(\varphi, \psi) = \mathrm{Fm}$ implies $\neg \psi \in Cn(\varphi)$.
    \end{enumerate}
    \item The (weak)\footnote{We refer to this property as weak, because in the literature the property referred to as {\em deduction-detachment property} is $\psi \in Cn(\Gamma, \varphi)$ iff $\varphi \rightarrow \psi \in Cn(\Gamma)$ for all $\varphi, \psi \in \mathrm{Fm}$ and $\Gamma \subseteq \mathrm{Fm}$.} {\em deduction-detachment property} ($\rightarrow_P$) holds for $\mathcal{L}$ if a term $t(x,y)$ (which we denote $x\rightarrow y$) exists in the language of $\mathcal{L}$ such that $\psi \in Cn(\chi,\varphi)$ iff $\varphi \rightarrow \psi \in Cn(\chi)$ for all $\varphi, \psi \in \mathrm{Fm}$.

    \item The {\em co-implication property} ($\pdla_P$) holds for $\mathcal{L}$ if a term $t(x,y)$ (which we denote $x \pdla y$, to be read as ``$x$ excludes $y$'') exists in the language of $\mathcal{L}$ such that $\chi \in Cn(\varphi \pdla \psi)$ iff $Cn(\chi) \cap Cn(\psi) \subseteq Cn(\varphi)$ for all $\varphi,\psi, \chi \in \mathrm{Fm}$.
\end{enumerate}

\begin{lemma}
\label{lemma:antitonicity of neg}
For any logic $\mathcal{L} = (\mathrm{Fm}, \vdash)$,
\begin{enumerate}
    \item If  properties $\wedge_P$, $\vee_P$, and $\neg_W$ hold for $\mathcal{L}$, then $\neg\varphi\vee \neg \psi\vdash \neg (\varphi\wedge \psi)$ for all $\varphi, \psi\in \mathrm{Fm}$.
    \item If in addition  property $\neg_{Il}$ holds for $\mathcal{L}$, then $ \neg (\varphi\wedge \psi)\vdash \neg\varphi\vee \neg \psi$ for all $\varphi, \psi\in \mathrm{Fm}$.
    \end{enumerate}
\end{lemma}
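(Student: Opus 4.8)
The plan is to read $\vdash$ as a derivability preorder, first extract the order-theoretic content of the hypotheses, and then assemble both inequalities from antitonicity of $\neg$ together with these facts. From $\wedge_P$ we have $Cn(\varphi\wedge\psi)=Cn(\{\varphi,\psi\})$, and since $\varphi,\psi\in Cn(\{\varphi,\psi\})$ by reflexivity, the conjunction is a lower bound of its conjuncts: $\varphi\wedge\psi\vdash\varphi$ and $\varphi\wedge\psi\vdash\psi$. Dually, from $\vee_P$ we have $Cn(\alpha\vee\beta)=Cn(\alpha)\cap Cn(\beta)$, which contains $\alpha\vee\beta$ by reflexivity, so the disjunction is an upper bound of its disjuncts: $\alpha\vdash\alpha\vee\beta$ and $\beta\vdash\alpha\vee\beta$. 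I would also record that $\neg_W$ says precisely that $\neg$ is antitone (that is, $\varphi\vdash\psi$ implies $\neg\psi\vdash\neg\varphi$), and that $\neg_{Il}$ is equivalent to $\neg\neg\chi\vdash\chi$ for all $\chi$ (take $\psi=\chi$ in $Cn(\chi)\subseteq Cn(\neg\neg\chi)$, and use transitivity for the converse).

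For item~1 I would use the join characterization coming from $\vee_P$: $\neg\varphi\vee\neg\psi\vdash\chi$ holds iff both $\neg\varphi\vdash\chi$ and $\neg\psi\vdash\chi$. Hence it suffices to show $\neg\varphi\vdash\neg(\varphi\wedge\psi)$ and $\neg\psi\vdash\neg(\varphi\wedge\psi)$, and both are immediate by applying antitonicity ($\neg_W$) to the lower-bound facts $\varphi\wedge\psi\vdash\varphi$ and $\varphi\wedge\psi\vdash\psi$.

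The real work is item~2, for which the natural route is a double-negation sandwich. Setting $\theta:=\neg(\neg\varphi\vee\neg\psi)$, I would start from the upper-bound facts $\neg\varphi\vdash\neg\varphi\vee\neg\psi$ and $\neg\psi\vdash\neg\varphi\vee\neg\psi$, apply antitonicity to get $\theta\vdash\neg\neg\varphi$ and $\theta\vdash\neg\neg\psi$, and then use $\neg_{Il}$ (i.e.\ $\neg\neg\varphi\vdash\varphi$ and $\neg\neg\psi\vdash\psi$) with transitivity to obtain $\theta\vdash\varphi$ and $\theta\vdash\psi$. The step I expect to need the most care is passing from these to $\theta\vdash\varphi\wedge\psi$: here I would invoke $\wedge_P$ in the form $\{\varphi,\psi\}\vdash\varphi\wedge\psi$ and combine it with $\theta\vdash\varphi$ and $\theta\vdash\psi$ via the cut rule~(c) of the consequence relation, taking $\Delta=\{\varphi,\psi\}$. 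Once $\theta\vdash\varphi\wedge\psi$ is in hand, a second application of antitonicity gives $\neg(\varphi\wedge\psi)\vdash\neg\theta=\neg\neg(\neg\varphi\vee\neg\psi)$, and a final application of $\neg_{Il}$ followed by transitivity yields $\neg(\varphi\wedge\psi)\vdash\neg\varphi\vee\neg\psi$, as required.
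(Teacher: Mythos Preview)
Your proof is correct and follows essentially the same approach as the paper's: both use antitonicity of $\neg$ on the lower-bound facts for item~1, and for item~2 both set $\theta=\neg(\neg\varphi\vee\neg\psi)$, derive $\theta\vdash\varphi$ and $\theta\vdash\psi$ via antitonicity and $\neg_{Il}$, combine via $\wedge_P$, and finish with a second round of antitonicity and $\neg_{Il}$. The only difference is presentational---you phrase everything in terms of $\vdash$, while the paper works directly with $Cn$.
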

\begin{proof}
   1.  Let $\varphi, \psi\in \mathrm{Fm}$. By $\neg_W$ and $\wedge_P$, from $\varphi \in Cn(\varphi,\psi) = Cn(\varphi\wedge\psi)$ it follows $\neg(\varphi\wedge\psi) \in Cn(\neg \varphi)$.  Similarly, one shows that $\neg(\varphi\wedge\psi) \in Cn(\neg \psi)$. Hence, by $\vee_P$, $\neg(\varphi \wedge \psi) \in Cn(\neg\varphi) \cap Cn(\neg\psi) = Cn(\neg\varphi\vee\neg\psi)$ holds, from which $\neg\varphi\vee \neg \psi\vdash \neg (\varphi\wedge \psi)$ immediately follows.\\
    2. Since $\neg \varphi \vee \neg \psi \in Cn(\neg \varphi \vee \neg \psi) = Cn(\neg\varphi) \cap Cn(\neg \psi) \subseteq Cn(\neg \varphi)$, by applying $\neg_W$ we get $\neg\neg\varphi \in Cn(\neg(\neg\varphi\vee\neg\psi))$, i.e.~$Cn(\neg\neg\varphi) \subseteq Cn(\neg(\neg\varphi\vee\neg\psi))$, and by  $\neg_{Il}$, $\varphi \in Cn(\neg\neg\varphi) \subseteq Cn(\neg(\neg\varphi\vee\neg\psi))$. Similarly, one shows $\psi \in Cn(\neg(\neg\varphi\vee\neg\psi))$. The two statements  imply, by $\wedge_P$, that $\varphi \wedge \psi \in Cn(\varphi \wedge \psi) = Cn(\varphi, \psi) \subseteq Cn(\neg(\neg\varphi\vee\neg\psi))$. Applying again $\neg_W$ we obtain $\neg\neg(\neg\varphi\vee\neg\psi) \in Cn(\neg(\varphi\wedge\psi))$ and using $\neg_{Il}$ again we get $\neg\varphi\vee\neg\psi \in Cn(\neg(\varphi\wedge\psi))$, as required.
\end{proof}

\begin{lemma}
\label{lemma:antitonicity of neg second}
For any logic $\mathcal{L} = (\mathrm{Fm}, \vdash)$,
\begin{enumerate}
    \item If  properties $\wedge_P$, $\vee_P$, and $\neg_W$ hold for $\mathcal{L}$, then $\neg(\varphi \vee \psi) \vdash \neg \varphi \wedge \neg \psi$ for all $\varphi, \psi\in \mathrm{Fm}$.
    \item If in addition  property $\neg_S$, $\neg_A$ and $\vee_S$ hold for $\mathcal{L}$, then $\neg \varphi \wedge \neg \psi \vdash \neg(\varphi \vee \psi)$ for all $\varphi, \psi\in \mathrm{Fm}$\footnote{Notice that this can also be proven using $\neg_I$ and the previous Lemma in place of $\vee_S$, $\neg_S$, and $\neg_A$.}.
    \end{enumerate}
\end{lemma}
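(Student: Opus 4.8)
I would prove each of the two derivabilities by passing to the closure operator $Cn$ and reducing them to membership and inclusion statements among theories, exactly in the style of Lemma \ref{lemma:antitonicity of neg}. The two clauses are asymmetric: clause 1 follows from the antitonicity of $\neg$ packaged in $\neg_W$ together with the lattice behaviour of $\vee$ and $\wedge$, whereas clause 2 is the genuinely harder direction and is where the classical-flavoured properties $\neg_S$, $\neg_A$, and $\vee_S$ enter.

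For clause 1, I would start from $\vee_P$, which gives $Cn(\varphi \vee \psi) = Cn(\varphi) \cap Cn(\psi) \subseteq Cn(\varphi)$, so that $\varphi \vee \psi \in Cn(\varphi)$, and symmetrically $\varphi \vee \psi \in Cn(\psi)$. Applying $\neg_W$ to each of these yields $\neg\varphi \in Cn(\neg(\varphi \vee \psi))$ and $\neg\psi \in Cn(\neg(\varphi \vee \psi))$. Bundling the two via $\wedge_P$ --- since then $Cn(\neg\varphi \wedge \neg\psi) = Cn(\{\neg\varphi, \neg\psi\}) \subseteq Cn(\neg(\varphi \vee \psi))$ --- gives $\neg\varphi \wedge \neg\psi \in Cn(\neg(\varphi \vee \psi))$, i.e.~$\neg(\varphi \vee \psi) \vdash \neg\varphi \wedge \neg\psi$.

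For clause 2, the plan is to use $\neg_S$ to convert the goal $\neg(\varphi \vee \psi) \in Cn(\neg\varphi \wedge \neg\psi)$ into an inconsistency claim. Reading $\neg_S$ with antecedent formula $\neg\varphi \wedge \neg\psi$ (permissible since $\wedge_P$ identifies $Cn(\neg\varphi \wedge \neg\psi)$ with $Cn(\{\neg\varphi, \neg\psi\})$) and negated formula $\varphi \vee \psi$, it suffices to prove $Cn(\neg\varphi \wedge \neg\psi, \varphi \vee \psi) = \mathrm{Fm}$. Now $\vee_S$ with $\Gamma = \{\neg\varphi \wedge \neg\psi\}$ factors this theory as $Cn(\neg\varphi \wedge \neg\psi, \varphi) \cap Cn(\neg\varphi \wedge \neg\psi, \psi)$, and each factor equals $\mathrm{Fm}$: rewriting $Cn(\neg\varphi \wedge \neg\psi, \varphi) = Cn(\neg\varphi, \neg\psi, \varphi)$ by $\wedge_P$, monotonicity gives $Cn(\varphi, \neg\varphi) \subseteq Cn(\neg\varphi, \neg\psi, \varphi)$, and $\neg_A$ forces $Cn(\varphi, \neg\varphi) = \mathrm{Fm}$; the $\psi$-factor is handled symmetrically using $\neg_A$ on $\psi$. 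The intersection is thus $\mathrm{Fm}$, and $\neg_S$ delivers $\neg(\varphi \vee \psi) \in Cn(\neg\varphi \wedge \neg\psi)$.

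The main difficulty is concentrated in clause 2, in spotting that $\neg_S$ is the right device to trade the target derivability for an inconsistency that $\vee_S$ and $\neg_A$ can then discharge; once this is seen, the only fiddly point is the routine bookkeeping between the single conjunctive premise $\neg\varphi \wedge \neg\psi$ and the two-element set $\{\neg\varphi, \neg\psi\}$, which $\wedge_P$ handles uniformly (and which is also what licenses applying the formula-indexed property $\neg_S$ to a conjunction rather than to a set of premises). As the footnote signals, clause 2 can alternatively be obtained without $\vee_S$, $\neg_S$, $\neg_A$: under $\neg_I$ one has $\varphi \equiv \neg\neg\varphi$ and $\psi \equiv \neg\neg\psi$, whence $\varphi \vee \psi \equiv \neg\neg\varphi \vee \neg\neg\psi$ (using that $\vee_P$ makes $\vee$ respect $\equiv$); Lemma \ref{lemma:antitonicity of neg} applied to $\neg\varphi, \neg\psi$ gives $\neg\neg\varphi \vee \neg\neg\psi \equiv \neg(\neg\varphi \wedge \neg\psi)$, and a final appeal to $\neg_I$, together with the fact that $\neg$ preserves $\equiv$ (a consequence of $\neg_W$), yields $\neg(\varphi \vee \psi) \equiv \neg\varphi \wedge \neg\psi$.
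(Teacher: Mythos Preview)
Your proof is correct and follows essentially the same approach as the paper's: for clause~1 you use $\vee_P$ and $\neg_W$ to get each conjunct into $Cn(\neg(\varphi\vee\psi))$ and then bundle via $\wedge_P$, and for clause~2 you reduce via $\neg_S$ to showing $Cn(\neg\varphi\wedge\neg\psi,\varphi\vee\psi)=\mathrm{Fm}$, which you discharge using $\vee_S$ and $\neg_A$ exactly as the paper does. Your additional sketch of the $\neg_I$-based alternative for clause~2 is also correct and fleshes out the footnote.
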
 
\begin{proof}
   1.  Let $\varphi, \psi\in \mathrm{Fm}$. From $\vee_P$ it follows that $\varphi \vee \psi \in Cn(\varphi)$, which implies, by $\neg_W$, that $\neg \varphi \in Cn(\neg(\varphi \vee \psi))$. Similarly, $\neg\psi \in Cn(\neg(\varphi\vee\psi))$. Hence, $Cn(\neg\varphi\wedge\neg\psi) = Cn(\neg\varphi,\neg\psi)   \subseteq Cn(\neg(\varphi\vee\psi))$, as required.\\
    2. 
    Let $\varphi,\psi\in \mathrm{Fm}$. By $\vee_S$ and $\neg_A$ it follows that $Cn(\neg\varphi\wedge\neg\psi,\varphi\vee\psi) = Cn(\neg\varphi\wedge\neg\psi,\varphi) \cap Cn(\neg\varphi\wedge\neg\psi,\psi) \supseteq Cn(\neg\varphi,\varphi) \cap Cn(\neg\psi,\psi) = \mathrm{Fm} \cap \mathrm{Fm} = \mathrm{Fm}$. By  $\neg_S$, this implies $\neg \varphi \wedge \neg \psi \vdash \neg(\varphi \vee \psi)$, as required.
\end{proof}

\begin{proposition} \label{prop:preliminary_properties} For any logic $\mathcal{L} = (\mathrm{Fm}, \vdash)$,
    \begin{enumerate}
    \item if $\wedge_P$ and $\vee_S$ hold for $\mathcal{L}$, then  $\alpha \wedge (\beta \vee \gamma) \vdash (\alpha \wedge \beta) \vee (\alpha \wedge \gamma)$ for all $\alpha, \beta, \gamma\in \mathrm{Fm}$.
    \item For all $\alpha, \beta \in \mathrm{Fm}$, if $\alpha \in Cn(\beta)$, then $Cn(\Gamma, \alpha) \subseteq Cn(\Gamma, \beta)$ for  every $\Gamma\subseteq\mathrm{Fm}$. \label{prop: se_p_two}
\item The following are equivalent:\label{prop: galois}
\begin{enumerate}
\item Property $\neg_{Ir}$ holds of $\mathcal{L}$;
\item for all $\varphi, \psi\in \mathrm{Fm}$, $\varphi \vdash \neg \psi$ iff $\psi \vdash \neg \varphi$. 
\end{enumerate}
\item Property $\top_P$ implies $\top_W$, and if $Cn(\varnothing)\neq \varnothing$, then $\top_W$ implies $\top_P$.
\item \label{item: bot} Properties $\bot_P$ and $\neg_{Ir}$  imply $\top_W$, and in the presence of $Cn(\varnothing)\neq \varnothing$, also $\top_P$.  
\item If $Cn(\varnothing)\neq \varnothing$, and $\wedge_P$, and $\bot_P$ hold, then the following are equivalent:
\begin{enumerate}
\item properties $\neg_{Ir}$, $\neg_A$, and $\neg_P$ hold;
\item $\varphi \wedge \psi \vdash \bot$ iff $\varphi \vdash \neg \psi$ for all $\varphi, \psi\in \mathrm{Fm}$.
\end{enumerate}
\label{prop:intuitionistic}
\item In the presence of $\wedge_P$ and $\vee_S$,  properties $\neg_{Il}$ and $\neg_A$ imply $\neg_P$.
\item In the presence of $\vee_P$, the following are equivalent:
\begin{enumerate}\item property $\pdla_P$ holds;
\item $ \varphi \pdla \psi \vdash \chi$ iff $\varphi \vdash \chi \vee \psi$.
\end{enumerate}
\item If $Cn(\varnothing)\neq \varnothing$, then  $\bot_P$,  $\neg_{Ir}$, and $\neg_P$ imply $\neg_S$. \label{prop:strong_negation}
\item In the presence of $\wedge_P$, the following are equivalent:
\begin{enumerate}\item property $\rightarrow_P$ holds;
\item $\chi\vdash  \varphi \rightarrow \psi$ iff $\varphi\wedge \chi \vdash  \psi$.
\end{enumerate}
\item If $\pdla_P$ holds, then, for all $\alpha, \varphi, \psi \in \mathrm{Fm}$, if $\varphi \vdash \psi$  then $\alpha \pdla \psi \vdash \alpha \pdla \varphi$ and $\varphi\pdla \alpha\vdash \psi\pdla\alpha$. \label{lemma:pdla_antitone}
\item If $\rightarrow_P$ holds, then, for all $\alpha, \varphi, \psi \in \mathrm{Fm}$, if $\varphi \vdash \psi$  then $\alpha \rightarrow \varphi \vdash \alpha \rightarrow \psi $ and $\psi\rightarrow \alpha\vdash \varphi\rightarrow\alpha$. \label{lemma:ra_antitone}
\end{enumerate}
\end{proposition}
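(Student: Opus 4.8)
The plan is to prove both conjuncts by the same three-move schema: first discharge the relevant implication using the deduction-detachment property $\rightarrow_P$, so as to reduce the goal to a membership statement of the form $\chi \in Cn(\cdots)$; then obtain the needed antecedent by \emph{detachment} (i.e.~modus ponens), which is itself an immediate instance of $\rightarrow_P$; then feed in the hypothesis $\varphi \vdash \psi$ by means of the cut rule (property (c) of consequence relations); and finally re-introduce the implication via the other direction of $\rightarrow_P$. Throughout, I would keep the context discharged by $\rightarrow_P$ a \emph{single} formula, so that only the weak form of the property is ever invoked.

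For the first conjunct I would argue as follows. To prove $\alpha \rightarrow \varphi \vdash \alpha \rightarrow \psi$, that is, $\alpha \rightarrow \psi \in Cn(\alpha \rightarrow \varphi)$, I apply $\rightarrow_P$ with $\chi := \alpha \rightarrow \varphi$, reducing the goal to $\psi \in Cn(\alpha \rightarrow \varphi, \alpha)$. Instantiating $\rightarrow_P$ again with $\chi := \alpha \rightarrow \varphi$ and using reflexivity (namely $\alpha \rightarrow \varphi \in Cn(\alpha \rightarrow \varphi)$) yields detachment, i.e.~$\varphi \in Cn(\alpha \rightarrow \varphi, \alpha)$. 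Since $\varphi \vdash \psi$ by hypothesis, a cut with $\Delta = \{\varphi\}$ and $\Gamma = \{\alpha \rightarrow \varphi, \alpha\}$ gives $\psi \in Cn(\alpha \rightarrow \varphi, \alpha)$, which is exactly the reduced goal.

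For the second conjunct the same template is run in the dual order. To prove $\psi \rightarrow \alpha \vdash \varphi \rightarrow \alpha$, that is, $\varphi \rightarrow \alpha \in Cn(\psi \rightarrow \alpha)$, I apply $\rightarrow_P$ with $\chi := \psi \rightarrow \alpha$ to reduce the goal to $\alpha \in Cn(\psi \rightarrow \alpha, \varphi)$. Detachment (again an instance of $\rightarrow_P$ at $\chi := \psi \rightarrow \alpha$) gives $\alpha \in Cn(\psi \rightarrow \alpha, \psi)$, while the hypothesis $\varphi \vdash \psi$ together with monotonicity (property (b)) yields $\psi \in Cn(\psi \rightarrow \alpha, \varphi)$. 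A cut with $\Delta = \{\psi \rightarrow \alpha, \psi\}$ and $\Gamma = \{\psi \rightarrow \alpha, \varphi\}$, noting $\psi \rightarrow \alpha \in Cn(\Gamma)$ by reflexivity, then delivers $\alpha \in Cn(\psi \rightarrow \alpha, \varphi)$, as required.

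Since every ingredient is either an instance of $\rightarrow_P$, a use of reflexivity or monotonicity, or a single cut, I do not expect a serious obstacle. The only point requiring care is the bookkeeping of which formula plays the role of the discharged antecedent and which plays the role of the single side-context $\chi$ in each application of $\rightarrow_P$, so that the weak (single-premise) form suffices and no appeal to $\wedge_P$ (and hence to the equivalent reformulation of $\rightarrow_P$ available under $\wedge_P$) is needed.
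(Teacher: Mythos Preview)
Your proof of item~\ref{lemma:ra_antitone} is correct and follows essentially the same approach as the paper: reduce each goal via $\rightarrow_P$ to a membership statement, obtain the needed antecedent by detachment (itself an instance of $\rightarrow_P$ applied to a reflexive premise), and then feed in the hypothesis $\varphi\vdash\psi$. The only cosmetic difference is that for the second conjunct the paper packages the monotonicity-plus-cut step as an appeal to item~\ref{prop: se_p_two}, whereas you spell out the cut explicitly; the underlying argument is the same.
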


\begin{proof}
\begin{enumerate}
    \item  $Cn(\alpha \wedge (\beta \vee \gamma)) = Cn(\alpha, \beta \vee \gamma) = Cn(\alpha, \beta) \cap Cn(\alpha, \gamma) = Cn(\alpha \wedge \beta) \cap Cn(\alpha \wedge \gamma) = Cn((\alpha \wedge \beta) \vee (\alpha \wedge \gamma))$.
    \item By assumption and the monotonicity of $Cn(-)$, $\alpha\in Cn(\beta)\subseteq Cn(\Gamma, \beta)$; moreover, $\Gamma\subseteq Cn(\Gamma)\subseteq Cn(\Gamma, \beta)$. Hence, $Cn(\Gamma, \alpha )\subseteq Cn(\Gamma, \beta)$, as required.
    \item $((a) \Rightarrow (b))$ Without loss of generality we only prove that $\varphi \vdash \neg \psi$ implies $\psi \vdash \neg \varphi$. From $\neg \psi \in Cn(\varphi)$ we get $\neg \varphi \in Cn(\neg \neg \psi) \subseteq Cn(\psi)$, via $\neg_W$ and then $\neg_{Ir}$. This proves the assertion.

    $((b) \Rightarrow (a))$ We need to prove that $Cn(\neg \neg \varphi) \subseteq Cn(\varphi)$, and that $\varphi \in Cn(\psi)$ implies $\neg \psi \in Cn(\neg\varphi)$. For the first part, $\neg \varphi \vdash \neg \varphi$ implies $\varphi\vdash \neg \neg \varphi $, which yields the required inclusion. For the second part, let $\varphi \in Cn(\psi)$. By assumption, to prove $\neg \psi \in Cn(\neg\varphi)$ it is enough to show that $\neg \neg \varphi \in Cn(\psi)$. The last statement holds because $Cn(\neg \neg \varphi) \subseteq Cn(\varphi) \subseteq Cn(\psi)$.
    \item By assumption, $\top\in Cn(\varnothing)\subseteq Cn(\varphi)$ for every $\varphi\in \mathrm{Fm}$, which proves the first part of the statement. For the second part, $Cn(\top)\subseteq Cn(\varphi)\subseteq Cn(\varnothing)$ for any $\varphi\in Cn(\varnothing)$, and by assumption such a $\varphi$ exists.
    \item Let  $\top: = \neg\bot$ be the required term. For any $\varphi\in \mathrm{Fm}$, $\bot\vdash \varphi$ by $\bot_P$, which implies $\neg\varphi\vdash \neg \bot$ by $\neg_W$ for any $\varphi\in \mathrm{Fm}$. Hence in particular, instantiating $\varphi: =\neg\varphi$, we get $\neg\neg \varphi\vdash \neg \bot$ for any $\varphi\in \mathrm{Fm}$, which implies, by $\neg_{Ir}$, that  $\varphi\vdash \neg\neg \varphi\vdash \neg \bot$ for any $\varphi\in \mathrm{Fm}$, which proves the first part of the statement. Specializing the last statement to any $\varphi\in Cn(\varnothing)$ yields $Cn(\neg\bot)\subseteq Cn(\varphi)\subseteq  Cn(\varnothing)$, which completes the proof. 
    \item $((a) \Rightarrow (b))$ If $\varphi \vdash \neg \psi$, then by $\neg_A$ and item \ref{prop: se_p_two} above, $\mathrm{Fm} = Cn(\psi, \neg\psi)\subseteq Cn(\psi, \varphi) = Cn(\varphi\wedge\psi)$, hence $Cn(\varphi \wedge \psi) = Cn(\bot)$, as required.
    Conversely, assume that  $Cn(\bot) \subseteq Cn(\varphi \wedge \psi)$, which implies, by $\neg_W$, that $Cn(\neg(\varphi\wedge\psi)) \subseteq Cn(\neg\bot)$.   
      From this and item \ref{prop: se_p_two} we get $Cn(\varphi, \neg(\varphi\wedge\psi)) \subseteq Cn(\varphi, \neg\bot) = Cn(\varphi)$, the last identity holding since, by  item \ref{item: bot}, $Cn(\neg \bot) = Cn(\varnothing)$. The required statement follows from this inclusion and $\neg_P$.

    $((b) \Rightarrow (a))$ By  item \ref{prop: galois}, to show  $\neg_{Ir}$, it is enough to show that equivalence \ref{prop: galois}(b) holds. If $\varphi\vdash \neg\psi$, then by assumption $\varphi\wedge \psi\vdash \bot$, i.e.~(thanks to $\wedge_P$) $\psi\wedge \varphi\vdash \bot$ iff $\psi\vdash \neg \varphi$, as required. As to $\neg_A$, from $\neg\psi\vdash\neg\psi$ we get $\neg\psi\wedge \psi\vdash \bot$, which, in the presence of $\bot_P$ and $\wedge_P$, is equivalent to $Cn(\neg\psi, \psi) = \mathrm{Fm}$, as required. As to   $\neg_P$, by assumption, the required entailment $\varphi \wedge \neg(\varphi \wedge \psi) \vdash \neg \psi$ is equivalent to $(\varphi \wedge \neg(\varphi \wedge \psi)) \wedge \psi \vdash \bot$, which by $\wedge_P$ is equivalent to $((\varphi \wedge \psi)\wedge \neg(\varphi \wedge \psi))   \vdash \bot$, which is equivalent to $\neg(\varphi \wedge \psi)   \vdash \neg(\varphi \wedge \psi)$, which is trivially true.
    \item  Lemma \ref{lemma:antitonicity of neg} and the assumptions ($\neg_{Il}$ in particular) imply that $Cn(\neg(\varphi \wedge \psi)) = Cn(\neg \varphi \vee \neg \psi)$. Hence, from this, $\vee_S$, $\neg_A$, and item \ref{prop: se_p_two},  $Cn(\varphi, \neg(\varphi \wedge \psi)) = Cn(\varphi, \neg \varphi \vee \neg \psi) = Cn(\varphi, \neg \varphi) \cap Cn(\varphi, \neg \psi) = Cn(\varphi, \neg \psi)$. The required statement follows from this and $\neg \psi \in Cn(\varphi, \neg \psi)$.
    \item ($(a)\Rightarrow(b)$) It is enough to show that $\chi \in Cn(\varphi \pdla \psi)$ iff $ \chi \vee \psi  \in Cn(\varphi)$. By $\pdla_P$ and $\vee_P$,  $\chi \in Cn(\varphi \pdla \psi) $ iff $\chi\vee \psi\in Cn(\chi \vee \psi) = Cn(\chi) \cap Cn(\psi) \subseteq Cn(\varphi)$, as required.

    ($(b)\Rightarrow(a)$) $\chi\in Cn(\varphi\pdla \psi)$ iff $\varphi\pdla \psi\vdash \chi$ iff $\varphi\vdash \chi\vee \psi$ iff $ Cn(\chi) \cap Cn(\psi)  = Cn(\chi \vee \psi) \subseteq Cn(\varphi)$.
    \item By assumption, $Cn (\varphi\wedge \psi) = Cn(\varphi, \psi) = \mathrm{Fm} = Cn(\bot)$, i.e.~$\varphi\wedge \psi\vdash \bot$, which implies $\neg\bot\vdash \neg(\varphi\wedge \psi)$. 
    From this and item \ref{prop: se_p_two} we get $Cn(\varphi, \neg(\varphi\wedge\psi)) \subseteq Cn(\varphi, \neg\bot) = Cn(\varphi)$, the last identity holding since, by  item \ref{item: bot}, $Cn(\neg \bot) = Cn(\varnothing)$. The required statement follows from this inclusion and $\neg_P$.
    \item ($(a)\Rightarrow(b)$) It is enough to show that $\varphi \rightarrow \psi \in Cn(\chi)$ iff $ \psi  \in Cn(\varphi\wedge\chi)$. By $\rightarrow_P$ and $\wedge_P$,  $\varphi \rightarrow \psi \in Cn(\chi)$  iff $ \psi\in  Cn(\varphi, \chi) = Cn(\varphi\wedge \chi)$, as required.

    ($(b)\Rightarrow(a)$) $\varphi\rightarrow \psi\in Cn(\chi)$ iff $\chi \vdash \varphi\rightarrow \psi$ iff $\varphi\wedge \chi\vdash \psi$ iff $  Cn(\psi)  \subseteq Cn(\varphi \wedge \chi)  =  Cn(\varphi, \chi)$.

    \item By $\alpha \pdla \varphi \in Cn(\alpha \pdla \varphi)$  and  $\pdla_P$ we deduce $Cn(\alpha \pdla \varphi) \cap Cn(\varphi) \subseteq Cn(\alpha)$. Hence, from $Cn(\psi) \subseteq Cn(\varphi)$ it follows $Cn(\alpha \pdla \varphi) \cap Cn(\psi) \subseteq Cn(\alpha \pdla \varphi) \cap Cn(\varphi)  \subseteq Cn(\alpha)$, and using $\pdla_P$ again, we conclude  $\alpha \pdla \varphi \in Cn(\alpha \pdla \psi)$, as required. For the second part of the statement, by $\psi \pdla \alpha \in Cn(\psi \pdla \alpha)$  and  $\pdla_P$ we deduce $Cn(\psi \pdla \alpha) \cap Cn(\alpha) \subseteq Cn(\psi)$. This and $Cn(\psi) \subseteq Cn(\varphi)$ imply $Cn(\psi \pdla \alpha) \cap Cn(\alpha)   \subseteq Cn(\varphi)$, and using $\pdla_P$ again, we get  $ \psi\pdla \alpha \in Cn(\varphi\pdla \alpha)$, as required.
    \item The first part of the statement is equivalent to $\alpha\rightarrow \psi\in Cn(\alpha\rightarrow \varphi)$, which, by $\rightarrow_P$, is equivalent to $\psi\in Cn(\alpha\rightarrow \varphi, \alpha)$. Since by assumption $\psi\in Cn(\varphi)$, it is enough to show that $\varphi\in Cn(\alpha\rightarrow \varphi, \alpha)$, which, by $\rightarrow_P$, is equivalent to $\alpha\rightarrow \varphi\in Cn(\alpha\rightarrow \varphi)$, which is true. The second part is equivalent to $\varphi\rightarrow \alpha\in Cn(\psi\rightarrow \alpha)$, which, by $\rightarrow_P$, is equivalent to $\alpha\in Cn(\psi\rightarrow \alpha,\varphi)$. Since by assumption $\psi\in Cn(\varphi)$, by item \ref{prop: se_p_two}, it is enough to show that $\alpha\in Cn(\psi\rightarrow \alpha, \psi)$, which again by $\rightarrow_P$ is equivalent to $\psi\rightarrow \alpha\in Cn(\psi\rightarrow \alpha)$, which is true.
\end{enumerate}
\end{proof}

\begin{example}
\label{ex:main example}
Here we list some well known logics whose standard consequence relations are selfextensional; for each of them, we  highlight the metalogical properties it enjoys, and we also specify a  nontrivial\footnote{For instance, 
a trivial choice of $t(x)$ for which $\neg_A$, $\neg_{Il}$, and $\neg_P$ hold is $t(x)\coloneqq \bot$, and dually, a trivial choice of $t(x)$ for which $\sim_A$, $\neg_{Ir}$, and $\sim_P$ hold is $t(x)\coloneqq \top$.} `term-connective'  witnessing the property whenever it does not belong to the primitive signature with which the given logic is most commonly presented.
\begin{enumerate}  
\item For the selfextensional logic $\mathcal{L}$ canonically associated with the class of lattices with bottom and without top, properties $\wedge_P$, $\vee_P$ and $\bot_P$ hold but  $\top_W$ does not. For this logic, $Cn(\varnothing) = \varnothing$ (i.e.~$\mathcal{L}$ is a logic without theorems).
\item  For  positive modal logic \cite{dunn95}, properties $\wedge_P$ and $\vee_S$, $\bot_P$, $\top_P$ hold and no other property listed above.
\item For orthologic \cite{goldblatt74}, only properties $\wedge_P$, $\vee_P$, $\bot_P$, $\top_P$, $\neg_I$, and $\neg_A$ hold.
\item \label{item:pseudocomplemented} For the logic of pseudocomplemented lattices \cite[Chapter 7]{blyth}, properties $\top_P$, $\bot_P$, $\wedge_P$ $\vee_P$, $\neg_A$ and $\neg_S$ hold. 
\item For the basic Lambek calculus \cite{GJKO},  
property $\neg_W$ holds for $t_1(x): = x\backslash \bot$ and for $t_2(x): = \bot\slash x$.
\begin{enumerate}
   
\item If the `contraction'\footnote{We refer to this axiom as contraction since it corresponds to the well known contraction rule \infer{\varphi\vdash \delta}{\varphi,\varphi\vdash \delta}. Likewise, the weakening axiom corresponds to the weakening rule \infer{\varphi, \psi\vdash \delta}{\varphi\vdash \delta}.} axiom $\varphi \vdash \varphi \otimes \varphi$ is added, then  $\neg_A$ also holds for $t_1(x)$ and $t_2(x)$ as above.
\item If the `weakening' axiom $\varphi\otimes \psi \vdash \varphi$ is added, then $\top_W$ also holds for $t_1(x): =x\backslash x$ and $t_2(x): = x\slash x$. Notice, however, that since $Cn(\varnothing) = \varnothing$,  $\top_P$ does not hold.
\end{enumerate}

\item For semi-De Morgan logic \cite{sankappanavar}, only properties $\wedge_P$, $\vee_S$, $\bot_P$, $\top_P$ and $\neg_W$ hold. 
\begin{enumerate}
\item For lower quasi-De Morgan logic, property $\neg_{Ir}$ also holds.
\item For upper quasi-De Morgan logic, property $\neg_{Il}$ also holds.
\item For almost pseudocomplemented logic, property $\neg_A$ also holds.
\end{enumerate}
Properties $\neg_I$ and $\sim_A$ do not hold for any of these logics. 
\item For De Morgan logic \cite{demorgan}, all the properties of semi-De Morgan logic hold, with the addition of $\neg_I$. However, neither $\neg_A$ nor $\sim_A$ hold.
\item Properties $\wedge_P$, $\vee_S$, $\bot_P$, $\top_P$, and $\rightarrow_P$ hold for intuitionistic logic. Furthermore, properties $\neg_{Ir}$, $\neg_A$, $\neg_P$, and $\neg_S$ hold for $t(x) \coloneqq x \rightarrow \bot$.
\item For bi-intuitionistic logic \cite{Rauszer1974}, all the properties of intuitionistic logic hold plus $\pdla_P$. Furthermore, $\neg_{Il}$, $\sim_A$ and $\sim_P$ hold for $t(x) \coloneqq \top \pdla x$.
\item For any logic based on classical logic (e.g.~classical modal logic $K$ and other modal expansions of CPL), properties $\neg_I$, $\neg_A$, $\neg_P$, $\neg_S$, $\sim_A$, and $\sim_P$ hold, while  $\rightarrow_P$ holds for $t_1(x,y) \coloneqq \neg x \vee y$ and  $\pdla_P$ holds for $t_2(x,y) \coloneqq x \wedge \neg y$.

\item For the implicative fragment of intuitionistic logic, only $\rightarrow_P$ holds (cf.~\cite{Jansana2006conjunction, jansana2007implication}).
  
\end{enumerate}
\end{example}
\subsection{Normative  systems}
\label{ssec: normative systems CPL}

Input/output logic  \cite{Makinson00} is a framework modelling the interaction between the relation of logical entailment between states of affair (states of affair being represented by formulas) and other binary relations on states of affair, representing e.g.~systems of norms, strategies, preferences, and so on.  

Let $\mathcal{L}_{\mathrm{CPL}}= (\mathrm{Fm}, \vdash_{\mathrm{CPL}})$, s.t.~$\mathrm{Fm}$ is the  language of classical propositional logic (CPL) over a given (denumerable) set $\mathsf{Prop}$ of proposition variables, and $\vdash_{\mathrm{CPL}}\, \subseteq\, \mathcal{P}(\mathrm{Fm})\times \mathrm{Fm}$ is the entailment relation of classical propositional logic. Throughout this paper,  {\em formulas}, i.e.~elements in $\mathrm{Fm}$ will be denoted by lowercase Greek letters, and sets of formulas by uppercase Greek letters. For any $\Gamma\subseteq \mathrm{Fm}$, let $Cn_{\mathrm{CPL}}(\Gamma): = \{\varphi\in \mathrm{Fm}\mid \Gamma\vdash_{\mathrm{CPL}} \varphi\}$. 
A  {\em normative system} on $\mathcal{L}_{\mathrm{CPL}}$ is a relation $N \subseteq \mathrm{Fm}\times \mathrm{Fm}$, the elements  $(\alpha,\varphi)$ of which are called {\em conditional norms} (or obligations). 

  A normative system  $N \subseteq \mathrm{Fm}\times\mathrm{Fm}$ is {\em internally incoherent} if  $(\alpha, \varphi)$ and $ (\alpha, \neg \varphi) \in N$ for some $\alpha, \varphi\in \mathrm{Fm}$; a normative system $N$ is {\em internally coherent} if it is not internally incoherent.
  If  $N, N' \subseteq \mathrm{Fm}\times\mathrm{Fm}$ are  normative systems,  $N$ is {\em almost included} in $N'$ (in symbols: $N\subseteq_c N'$) if $(\alpha,\varphi) \in N$ and $\alpha\not\vdash_{\mathrm{CPL}}\bot$ imply $(\alpha,\varphi) \in N'$.

Each  norm $(\alpha,\varphi)\in N$ can be intuitively read as ``given $\alpha$, it {\em should} be the case that $\varphi$''. 
This interpretation can be further specified according to the context: for instance, if $N$ formally represents a system of (real-life) rules/norms, then  we can read $(\alpha,\varphi)\in N$ as ``$\varphi$ is obligatory whenever $\alpha$ is the case''; if $N$ formally represents a scientific theory, then  we can read $(\alpha,\varphi)\in N$ as ``under conditions $\alpha$, one should observe $\varphi$'', in the sense that the scientific theory predicts $\varphi$ whenever $\alpha$; finally, if $N$ formally represents (the execution of) a program, then we can read $(\alpha,\varphi)\in N$ as ``in every state of computation in which $\alpha$ holds, the program will move to a state in which $\varphi$ holds''.  
For any  $\Gamma\subseteq \mathrm{Fm}$, let $N(\Gamma) := \{ \psi \mid \exists \alpha(\alpha \in \Gamma \ \&\ (\alpha,\psi)\in N )\}$. 

    An {\em input/output logic} is a tuple $\mathbb{L} = (\mathcal{L}_{\mathrm{CPL}}, N)$ s.t.~$\mathcal{L}_{\mathrm{CPL}}$ is a classical propositional logic, and $N$ is a normative system on $\mathcal{L}_{\mathrm{CPL}}$.


For any input/output logic $\mathbb{L} = (\mathcal{L}, N)$, and each $1\leq i\leq 4$, the output operation $out_i^N$ is defined as follows: for any $\Gamma\subseteq \mathrm{Fm}$,

%
\[out_i^N(\Gamma): = N_i(\Gamma) = \{ \psi\in \mathrm{Fm} \mid \exists \alpha(\alpha \in \Gamma \ \&\ (\alpha,\psi)\in N_i )\} \] 
 where $N_i\subseteq \mathrm{Fm}\times \mathrm{Fm}$ is the  {\em closure}  of $N$ under (i.e.~the smallest extension of $N$ satisfying)  the  inference rules below, as  specified in the table.
\vspace{1mm}
\begin{center}
\begin{tabular}{lll}
\infer[(\top)]{(\top,\top)}{} \infer[(\bot)]{(\bot,\bot)}{} &
\infer[\mathrm{(SI)}]{(\beta,\varphi)}{(\alpha,\varphi) &  \beta \vdash \alpha} &
\infer[\mathrm{(WO)}]{(\alpha,\psi)}{(\alpha,\varphi) &  \varphi \vdash \psi} \\[2mm]
\infer[\mathrm{(AND)}]{(\alpha,\varphi \land \psi)}{(\alpha,\varphi) & (\alpha,\psi)} &
\infer[\mathrm{(OR)}]{(\alpha \lor \beta, \varphi)}{(\alpha,\varphi) & (\beta,\varphi)} &
\infer[\mathrm{(CT)}]{(\alpha,\psi)}{(\alpha,\varphi) & (\alpha \land \varphi, \psi)}
\end{tabular}
\end{center}

\begin{center}
\label{table1}
	\begin{tabular}{ l l}
		\hline
		$N_i$ & Rules   \\
		\hline	
		$N_{1}$  & $ \mathrm{(\top), (SI), (WO), (AND)}$   \\
		$N_{2}$  & $\mathrm{(\top), (SI), (WO), (AND), (OR)}$  \\
		$N_{3}$  & $\mathrm{(\top), (SI), (WO), (AND), (CT)}$  \\
		$N_{4}$  & $\mathrm{(\top), (SI), (WO),(AND), (OR), (CT)}$ \\
		\hline
  
 \end{tabular}
\captionof{table}{closures of normative systems}
\end{center}      

\begin{remark}
\label{rem:additional rules}
In \cite{OlPaTo23}, the following additional rules are considered:
\begin{center}
\begin{tabular}{ccc}
\infer[\mathrm{(R-AND)}]{(\alpha,\varphi \land \psi)}{(\alpha,\varphi) & (\alpha,\psi) & \alpha\wedge\varphi\wedge\psi\not\vdash \bot}
&&
\infer[\mathrm{(R-CT)}]{(\alpha,\psi)}{(\alpha,\varphi) & (\alpha \land \varphi, \psi)  & \alpha\wedge\varphi\wedge\psi\not\vdash \bot}\\
\infer[\mathrm{(ex-OR)}]{(\alpha \lor \beta, \varphi\lor \psi)}{(\alpha,\varphi) & (\beta,\psi)} &&
\infer[\mathrm{(Eq)}]{(\alpha, \psi)}{(\alpha,\varphi) & \varphi \equiv \psi}\\
\end{tabular}
\end{center}
where $\varphi\equiv\psi$ iff $\varphi\vdash \psi$ and $\psi\vdash\varphi$. The rules in the upper row are versions of $\mathrm{(AND)}$ and $\mathrm{(CT)}$ with a built-in consistency check, while those in the lower row are derivable from $\mathrm{(OR)}$ and  $\mathrm{(WO)}$. These rules give rise to a space of sixteen normative systems, generated by replacing $\mathrm{(AND)}$ and $\mathrm{(CT)}$ (resp.~$\mathrm{(OR)}$ and  $\mathrm{(WO)}$)  in Table \ref{table1} with their modified versions. In the present paper, we only focus on the four types of normative systems indicated in the Table \ref{table1}. However, it is possible to generalize the whole space of normative systems considered in \cite{OlPaTo23} to selfextensional logics, and at the end of the next section we will briefly outline how this can be done.
\end{remark}

\subsection{Permission systems}
\label{ssec:permission systems CPL}


\paragraph{Negative permission systems.} Any  input/output logic $\mathbb{L} = (\mathcal{L}_{\mathrm{CPL}}, N)$  induces the {\em conditional or negative  permission system} $P_N \subseteq \mathrm{Fm}\times \mathrm{Fm}$ (cf.~\cite{Makinson00}) defined as follows: 

\begin{center}
	$P_N: = \{(\alpha,\varphi) \mid (\alpha,\neg \varphi) \notin N\}.$
\end{center}
The same definition applies verbatim to any input/output logic $\mathbb{L} = (\mathcal{L}_{\mathrm{IPL}}, N)$, where $\mathcal{L}_{\mathrm{IPL}}$ denotes intuitionistic propositional logic.

\begin{proposition}
\label{prop: charact neg permission}
    For any input/output logic $\mathbb{L} = (\mathcal{L}_{\mathrm{CPL/IPL}}, N)$ for which  $\mathrm{(WO)}$ holds, $P_N$ is the largest permission system $P$ such that, for all $\alpha,\varphi, \psi\in \mathrm{Fm}$, \begin{center} if $(\alpha, \varphi)\in P$ and $(\alpha, \psi)\in N$, then $Cn(\varphi, \psi)\neq\mathrm{ Fm}$.
\end{center}
\end{proposition}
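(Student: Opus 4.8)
The plan is to prove the two halves of the ``largest'' claim separately: first, that $P_N$ itself satisfies the stated coherence condition (soundness), and second, that every permission system $P$ satisfying that condition is contained in $P_N$ (maximality). Both directions will rest on a single algebraic reformulation of joint inconsistency in terms of negation, so I would isolate that observation before splitting into cases.

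The preliminary fact I would record is that, in both CPL and IPL, $Cn(\varphi, \psi) = \mathrm{Fm}$ iff $\psi \vdash \neg \varphi$ (equivalently $\varphi \vdash \neg \psi$). Indeed, by $\wedge_P$ we have $Cn(\varphi, \psi) = Cn(\varphi \wedge \psi)$, and since $Cn(\bot) = \mathrm{Fm}$ by $\bot_P$, the equality $Cn(\varphi \wedge \psi) = \mathrm{Fm}$ holds exactly when $\varphi \wedge \psi \vdash \bot$; finally, by Proposition \ref{prop:preliminary_properties}(\ref{prop:intuitionistic}) (or directly via $\rightarrow_P$, reading $\neg\varphi$ as $\varphi\to\bot$), this is equivalent to $\psi \vdash \neg\varphi$. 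I expect this step to be entirely routine, but since it is the hinge of the whole argument I would state it explicitly and note that the relevant metalogical properties ($\wedge_P$, $\bot_P$, $\rightarrow_P$, $\neg_A$) hold for both CPL and IPL by Example \ref{ex:main example}.

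For soundness I would argue by contraposition. Fix $\alpha,\varphi,\psi$ with $(\alpha,\psi)\in N$, and suppose $Cn(\varphi,\psi)=\mathrm{Fm}$. By the preliminary fact, $\psi\vdash\neg\varphi$, so applying $\mathrm{(WO)}$ to $(\alpha,\psi)\in N$ yields $(\alpha,\neg\varphi)\in N$, which by the definition of $P_N$ means $(\alpha,\varphi)\notin P_N$. This is precisely the contrapositive of the required implication, so $P_N$ satisfies the condition. The essential use of the hypothesis that $\mathrm{(WO)}$ holds for $N$ occurs exactly here.

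For maximality, let $P$ be any permission system satisfying the condition; I would show that $(\alpha,\varphi)\in P$ forces $(\alpha,\neg\varphi)\notin N$, i.e.~$(\alpha,\varphi)\in P_N$. Suppose instead $(\alpha,\neg\varphi)\in N$. Instantiating the condition at $\psi\coloneqq\neg\varphi$, with $(\alpha,\varphi)\in P$ and $(\alpha,\neg\varphi)\in N$, would give $Cn(\varphi,\neg\varphi)\neq\mathrm{Fm}$; but $\neg_A$ yields $Cn(\varphi,\neg\varphi)=\mathrm{Fm}$, a contradiction. Hence $P\subseteq P_N$, and together with soundness this establishes that $P_N$ is the largest such system. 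The proof is essentially routine once the inconsistency--negation equivalence is in hand; the only genuine points of care are that soundness consumes the $\mathrm{(WO)}$-closure of $N$ while maximality consumes $\neg_A$, and that both of these (as well as the equivalence itself) are available uniformly in the classical and the intuitionistic setting.
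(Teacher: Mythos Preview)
Your proof is correct and follows essentially the same approach as the paper: contrapose for soundness using $\psi\vdash\neg\varphi$ and $\mathrm{(WO)}$, then instantiate $\psi\coloneqq\neg\varphi$ and invoke $\neg_A$ for maximality. Your write-up is in fact more explicit than the paper's, which compresses the maximality half into a one-line ``the same argument shows\ldots''.
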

\begin{proof}
    Let us first show that the property holds for $P_N$. Let $\alpha,\varphi, \psi\in \mathrm{Fm}$ such that   $(\alpha, \psi)\in N$. 
    If $Cn(\varphi, \psi) = \mathrm{Fm}$, then $\psi\vdash_{\mathrm{CPL/IPL}} \neg \varphi$, which would imply, by $\mathrm{(WO)}$, that $(\alpha, \neg \varphi)\in N$, i.e.~$(\alpha, \varphi)\notin P_N$, as required. The same argument shows that any  permission system $P$ for which the property holds must be included in $P_N$.
\end{proof}

\paragraph{Static positive permission systems.} Static positive permission captures the idea that $\psi$ be permitted under $\gamma$ iff it is normatively entailed by some explicitly given permission $(\alpha, \varphi)$ in $P$, given the normative system $N$. In what follows, for any rule $\mathrm{(R)}$, and  any normative system $N$ on $\mathcal{L}_{\mathrm{CPL}}$, we let $N^{(R)}\subseteq \mathrm{Fm}\times \mathrm{Fm}$ denote the closure of $N$ under rule $\mathrm{(R)}$. For any  $(\alpha, \varphi)\in \mathrm{Fm}\times \mathrm{Fm}$, we let $N^{(R)}_{(\alpha, \varphi)}\subseteq \mathrm{Fm}\times \mathrm{Fm}$ denote the  closure of $N\cup\{(\alpha, \varphi)\}$ under rule $\mathrm{(R)}$,  and for any $1\leq i\leq 4$, we let $N^i_{(\alpha, \varphi)}\subseteq \mathrm{Fm}\times \mathrm{Fm}$ denote the closure of $N\cup\{(\alpha, \varphi)\}$ under the rules specified in Table \ref{table1}. 

For any normative system $N$ on $\mathcal{L}_{\mathrm{CPL}}$,  any conditional permission system $P\subseteq P_N$,   
%
and  any rule $\mathrm{(R)}$,  
 the {\em static positive permission systems associated with $N^{(R)}$ and $P$} (cf.~\cite{Makinson03}) 
are defined  as follows:
\begin{center}
	$ S^{\mathrm{(R)}}(P, N) :=\begin{cases} \bigcup\{ N^{\mathrm{(R)}}_{(\alpha, \varphi)} \mid (\alpha, \varphi)\in P\} & \text{if } P\neq \varnothing\\
 N^{\mathrm{(R)}} & \text{ otherwise}.
 \end{cases}$
 \end{center}
For  any $1\leq i\leq 4$,  
 the {\em static positive permission systems associated with $N^i$ and $P$} 
are defined  as follows:
\begin{center}
	$ S^i(P, N) :=\begin{cases} \bigcup\{ N^i_{(\alpha, \varphi)} \mid (\alpha, \varphi)\in P\} & \text{if } P\neq \varnothing\\
 N^i & \text{ otherwise}.
 \end{cases}$

\end{center}
It immediately follows from the definition above that 
\[N^{(R)}\subseteq S^{(R)}(P,N) \text{ and } N^{i}\subseteq S^{i}(P,N).\]
 
\paragraph{Dynamic permission systems.} The  notion of dynamic permission intends to capture the idea that a proposition $\varphi$  be permitted under condition $\alpha$  whenever forbidding it under $\alpha$, given the obligations of the normative system $N$, would entail forbidding some $\psi$ under some satisfiable condition $\gamma$ which is explicitly permitted under $\gamma$. 

\begin{definition} 
\label{def:dynpermclassical}(cf.~\cite{Makinson03})
For any normative system $N$ on $\mathcal{L}_{\mathrm{CPL}}$,  any conditional permission system $P\subseteq P_N$, and any rule $\mathrm{(R)}$,  
 the {\em dynamic positive permission system} $D^{\mathrm{(R)}}(P, N)$ is defined  as follows:

\begin{center}
		$  D^{\mathrm{(R)}}(P, N) = \{ (\alpha, \varphi) \mid \exists \gamma\exists  \psi(\gamma \not\vdash\bot \ \&\ (\gamma,  \psi) \in S^{(R)}(P, N) \ \&\  (\gamma, \neg \psi) \in N^{\mathrm{(R)}}_{(\alpha, \neg \varphi)})\}$,
		
	\end{center}
%
and for  any $1\leq i\leq 3$,  
 the {\em dynamic positive permission system} $D^{i}(P, N)$
is defined  as follows:

\begin{center}
		$  D^{i}(P, N) = \{ (\alpha, \varphi) \mid \exists \gamma\exists  \psi(\gamma \not\vdash\bot \ \&\ (\gamma,  \psi) \in S^{i}(P, N) \ \&\  (\gamma, \neg \psi) \in N^{\mathrm{i}}_{(\alpha, \neg \varphi)})\}$.
		
	\end{center}
 \end{definition}
\section{Normative  systems on selfextensional logics}
\label{sec:normative on selfextensional}
In the present section we build on \cite[Section 2.2]{wollic22}, and introduce generalized versions of normative systems in the framework of selfextensional logics.  

\begin{definition}
\label{def:normative system AAL}
Let $\mathcal{L}= (\mathrm{Fm}, \vdash)$ be a logic in the sense specified in Section \ref{ssec:selfextensional}. 
A  {\em normative system} on $\mathcal{L}$ is a relation $N \subseteq \mathrm{Fm}\times \mathrm{Fm}$, the elements  $(\alpha,\varphi)$ of which are called {\em conditional norms} (or obligations). 

  A normative system  $N \subseteq \mathrm{Fm}\times\mathrm{Fm}$ is {\em internally incoherent} if  $(\alpha, \varphi)$ and $ (\alpha, \psi) \in N$ for some $\alpha, \varphi, \psi\in \mathrm{Fm}$  such that  $Cn(\alpha) \neq \mathrm{Fm}$ and $Cn(\varphi,\psi) = \mathrm{Fm}$; a normative system $N$ is {\em internally coherent} if it is not internally incoherent.
  If  $N, N' \subseteq \mathrm{Fm}\times\mathrm{Fm}$ are  normative systems,  $N$ is {\em almost included} in $N'$ (in symbols: $N\subseteq_c N'$) if $(\alpha,\varphi) \in N$ and $Cn(\alpha)\neq\mathrm{Fm}$ imply $(\alpha,\varphi) \in N'$.
\end{definition}
The intuitive reading of any norm $(\alpha,\varphi)\in N$ remains the same as that discussed in the previous section.
%
For any  $\Gamma\subseteq \mathrm{Fm}$, let $N(\Gamma) := \{ \psi \mid \exists \alpha(\alpha \in \Gamma \ \&\ (\alpha,\psi)\in N )\}$. 
\begin{definition}
\label{def:input output logic}
    An {\em input/output logic} is a tuple $\mathbb{L} = (\mathcal{L}, N)$ s.t.~$\mathcal{L}= (\mathrm{Fm}, \vdash)$ is a (selfextensional) logic, and $N$ is a normative system on $\mathcal{L}$.
\end{definition}

\begin{definition}[Output operations] For any input/output logic $\mathbb{L} = (\mathcal{L}, N)$, and each $1\leq i\leq 4$, the output operation $out_i^N$ is defined as follows: for any $\Gamma\subseteq \mathrm{Fm}$,

%
\[out_i^N(\Gamma): = N_i(\Gamma) = \{ \psi\in \mathrm{Fm} \mid \exists \alpha(\alpha \in \Gamma \ \&\ (\alpha,\psi)\in N_i )\} \] 
 where $N_i\subseteq \mathrm{Fm}\times \mathrm{Fm}$ is the  {\em closure}  of $N$ under (i.e.~the smallest extension of $N$ satisfying)  the  inference rules below, as  specified in the table.

\vspace{1mm}
\begin{center}
\begin{tabular}{llll}
\infer[(\top)]{(\top,\top)}{} \infer[(\bot)]{(\bot,\bot)}{} &
\infer[\mathrm{(SI)}]{(\beta,\varphi)}{(\alpha,\varphi) &  \beta \vdash \alpha} &
\infer[\mathrm{(WO)}]{(\alpha,\psi)}{(\alpha,\varphi) &  \varphi \vdash \psi} \\[2mm]
\infer[\mathrm{(AND)}]{(\alpha,\varphi \land \psi)}{(\alpha,\varphi) & (\alpha,\psi)} &
\infer[\mathrm{(OR)}]{(\alpha \lor \beta, \varphi)}{(\alpha,\varphi) & (\beta,\varphi)} &
\infer[\mathrm{(CT)}]{(\alpha,\psi)}{(\alpha,\varphi) & (\alpha \land \varphi, \psi)} & \infer[\mathrm{(EX)}]{(\alpha , \varphi)}{(\alpha,\varphi\vee \psi) & Cn(\alpha, \psi) = \mathrm{Fm}} 
\end{tabular}
\end{center}

\begin{center}
\label{table2}
	\begin{tabular}{ l l}
		\hline
		$N_i$ & Rules   \\
		\hline	
		$N_{1}$  & $ \mathrm{(\top), (SI), (WO), (AND)}$   \\
		$N_{2}$  & $\mathrm{(\top), (SI), (WO), (AND), (OR)}$  \\
		$N_{3}$  & $\mathrm{(\top), (SI), (WO), (AND), (CT)}$  \\
		$N_{4}$  & $\mathrm{(\top), (SI), (WO),(AND), (OR), (CT)}$ \\
		\hline
  
 \end{tabular}
\captionof{table}{closures of normative systems}
\end{center}      
\end{definition}
Clearly, with the exception of  $\mathrm{(SI)}$ and $\mathrm{(WO)}$,  all the rules above (as well as the rules below and  in the next section) apply only to those input/output logics based on selfextensional logics with the (minimal) metalogical properties guaranteeing the existence of the corresponding term-connectives. So, for instance, rules $\mathrm{(AND)}$ and $\mathrm{(CT)}$ only apply in the context of logics for which $\wedge_P$ holds, and so on. For the sake of a better readability, in the remainder of the paper we will implicitly assume these basic properties, and only mention the additional properties when it is required.

\smallskip
Generalizations of $\mathrm{(AND)}$ and $\mathrm{(OR)}$ which do not require $\wedge_P$ and $\vee_P$, but  are equivalent to these closure rules in the presence of $\mathrm{(SI)}$, $\mathrm{(WO)}$, and  the above mentioned metalogical properties,  are the following:
\begin{center}
    \begin{tabular}{c c}
         \infer[\mathrm{(DD)}]{(\alpha,\chi ) \text{ for some }\chi\in Cn(\varphi, \psi)}{(\alpha,\varphi) & (\alpha,\psi)} &
\infer[\mathrm{(UD)}]{(\gamma, \varphi) \text{ for some } \gamma\in Cn(\alpha)\cap Cn(\beta)}{(\alpha,\varphi) & (\beta, \varphi)}  \\
         
    \end{tabular}
\end{center}
\smallskip
Let us conclude the present section by  discussing the new rule (EX), and how the rules mentioned in Remark \ref{rem:additional rules} can be generalized to selfextensional logics. The rule (EX) describes the natural interaction of normative systems with the coimplication connective $\alpha \pdla \psi$, which in CPL is defined as $\alpha\wedge \neg \psi$. The intuitive meaning of this rule is closely related with the logical principle of  ``modus tollens'': if whenever $\alpha$ either $\varphi$ or $\psi$ should hold, and $\alpha$ logically excludes $\psi$, then $\varphi$ should hold whenever $\alpha$. Notice that any normative system $N$ on CPL for which (WO) holds is also closed under (EX): indeed, if $\alpha, \varphi, \psi\in \mathrm{Fm_{CPL}}$ s.t.~$\alpha N (\varphi\vee \psi)$ and $\vdash_{\mathrm{CPL}}\alpha\wedge \neg \psi$, then $\varphi\vee\psi\vdash_{\mathrm{CPL}}(\varphi\vee\psi)\wedge \top \cong (\varphi\vee\psi)\wedge (\alpha\wedge \neg \psi)\cong (\varphi\wedge (\alpha\wedge \neg \psi))\vee (\psi\wedge (\alpha\wedge \neg \psi))\cong  (\varphi\wedge (\alpha\wedge \neg \psi))\vee \bot\cong \varphi$.

\medskip
While the rules $\mathrm{(ex-OR)}$ and $\mathrm{(Eq)}$ can be considered verbatim (in the case of $\mathrm{(ex-OR)}$, provided of course $\vee_P$ holds for $\mathcal{L}$), the built-in consistency check characterizing $\mathrm{(R-AND)}$ and $\mathrm{(R-CT)}$ can be incorporated as follows:
\begin{center}
\begin{tabular}{ccc}
\infer[\mathrm{(R-AND)}]{(\alpha,\varphi \land \psi)}{(\alpha,\varphi) & (\alpha,\psi) & Cn(\alpha,\varphi,\psi)\neq\mathrm{Fm}}
&&
\infer[\mathrm{(R-CT)}]{(\alpha,\psi)}{(\alpha,\varphi) & (\alpha \land \varphi, \psi)  & Cn(\alpha,\varphi,\psi)\neq\mathrm{Fm}}\\
\end{tabular}
\end{center}

\section{Permission systems on selfextensional logics}
\label{sec:permission systems}

In the present section, we introduce and motivate the extension of  the different notions of permission  studied  in the context of input/output logic \cite{Makinson03}, namely, negative permission (cf.~Section \ref{ssec:negperm}), positive static permission (cf.~Section \ref{ssec:static positive}), and dynamic permission (cf.~Section \ref{ssec:dynamic permission systems}), 
to the  general setting of selfextensional logics. 

\subsection{Negative permission systems}
\label{ssec:negperm}

Since the definition of negative permission as given at the beginning of Section \ref{ssec:permission systems CPL} 
does not apply verbatim to the  environment of normative systems based on the generic logics described in Definition \ref{def:normative system AAL}, 
we will use the characterizing property of $P_N$  stated in Proposition \ref{prop: charact neg permission} for the following definition, which informally says that any $\varphi$   is permitted under a given  $\alpha$ iff  $\varphi$  is not logically inconsistent with any obligation $\psi$ under $\alpha$.   



\begin{definition}
\label{def:compatible P}
    For any input/output logic $\mathbb{L} = (\mathcal{L}, N)$,  
    \begin{center}
        $P_N: = \{(\alpha, \varphi)\mid \forall \psi ((\alpha,\psi)\in N  \Rightarrow Cn(\varphi, \psi) \neq \mathrm{Fm}   )\}$.
    \end{center}
\end{definition}
\begin{proposition}
    For any selfextensional logic $\mathcal{L} = (\mathrm{Fm}, \vdash)$ with $\neg_S$ and $\neg_A$, and any normative system $N$ on $\mathrm{Fm}$ which is closed under $\mathrm{(WO)}$, 
    \[P_N = \{(\alpha, \varphi)\mid (\alpha, \neg\varphi)\notin N\}.\]    \end{proposition}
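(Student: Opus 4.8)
The plan is to prove the two set inclusions separately. Write $Q := \{(\alpha,\varphi)\mid (\alpha,\neg\varphi)\notin N\}$ for the right-hand side, so that the goal becomes $P_N = Q$. Both inclusions are most cleanly handled by contraposition, and the two hypotheses $\neg_S$ and $\neg_A$ turn out to drive the two directions asymmetrically.

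For the inclusion $Q \subseteq P_N$, I would argue contrapositively. Suppose $(\alpha,\varphi)\notin P_N$; by Definition \ref{def:compatible P} this means there exists some $\psi$ with $(\alpha,\psi)\in N$ and $Cn(\varphi,\psi)=\mathrm{Fm}$. Since $Cn(\varphi,\psi) = Cn(\psi,\varphi)$, the strong negation property $\neg_S$ applies with the roles of the two formulas swapped, yielding $\neg\varphi\in Cn(\psi)$, i.e.~$\psi\vdash\neg\varphi$. Because $N$ is closed under $\mathrm{(WO)}$, from $(\alpha,\psi)\in N$ together with $\psi\vdash\neg\varphi$ we obtain $(\alpha,\neg\varphi)\in N$, that is, $(\alpha,\varphi)\notin Q$. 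This establishes $Q\subseteq P_N$.

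For the reverse inclusion $P_N\subseteq Q$, again contrapositively: suppose $(\alpha,\varphi)\notin Q$, i.e.~$(\alpha,\neg\varphi)\in N$. Instantiating the universally quantified $\psi$ in the defining condition of $P_N$ with $\psi := \neg\varphi$ would force $Cn(\varphi,\neg\varphi)\neq\mathrm{Fm}$; but the absurd negation property $\neg_A$ delivers exactly $Cn(\varphi,\neg\varphi)=\mathrm{Fm}$, so the defining condition fails and $(\alpha,\varphi)\notin P_N$. Note that this direction uses only $\neg_A$ and not $\mathrm{(WO)}$, whereas the first direction crucially combines $\neg_S$ with $\mathrm{(WO)}$. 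I do not anticipate any serious obstacle; the only point requiring care is the application of $\neg_S$ in the first inclusion, since the property as stated reads ``$Cn(\varphi,\psi)=\mathrm{Fm}$ implies $\neg\psi\in Cn(\varphi)$'', and one must exploit the symmetry of $Cn(\{\varphi,\psi\})$ in its two arguments to extract the form $\neg\varphi\in Cn(\psi)$ that can be fed into $\mathrm{(WO)}$. Once this is observed, the argument reduces to unwinding the definitions.
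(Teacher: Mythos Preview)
Your proof is correct and follows essentially the same approach as the paper's: one inclusion uses $\neg_A$ with the witness $\psi:=\neg\varphi$, and the other combines $\neg_S$ with closure under $\mathrm{(WO)}$. The only cosmetic difference is that you phrase both directions contrapositively, whereas the paper argues one of them directly; your explicit remark about the symmetry of $Cn(\{\varphi,\psi\})$ needed to apply $\neg_S$ in the required form is a welcome clarification.
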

    \begin{proof}
        For the left-to-right inclusion, let $\psi: = \neg \varphi$; by the assumption and $\neg_A$, $Cn(\varphi, \neg\varphi) = \mathrm{Fm}$ implies that $(\alpha, \neg \varphi)\notin N$, as required.
        Conversely, let $\alpha, \varphi\in \mathrm{Fm}$ s.t.~$(\alpha, \neg \varphi)\notin N$ and let $\psi\in \mathrm{Fm}$ s.t.~$Cn(\varphi, \psi) = \mathrm{Fm}$. By  $\neg_S$, this implies that $\psi\vdash \neg \varphi$. Hence, $(\alpha,  \psi)\notin N$, for otherwise, by $\mathrm{(WO)}$,  $(\alpha, \psi)\in N$ and $\psi\vdash \neg \varphi$ would imply that $(\alpha, \neg\varphi)\in N$, against the assumption.
    \end{proof}
Hence, by the proposition above, Definition \ref{def:compatible P} is equivalent to the definition of negative permission as given at the beginning of Section \ref{ssec:permission systems CPL} in all  settings based on classical and intuitionistic propositional logic, but also on (non distributive) logics such as the logic of pseudocomplemented lattices (cf.~Example \ref{ex:main example}.\ref{item:pseudocomplemented}). 

\begin{proposition}
If $N$ is internally coherent, then $N \subseteq_c P_N$.
\end{proposition}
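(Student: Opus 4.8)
The plan is to unfold the two definitions involved and show that internal coherence is exactly what is needed to verify the defining condition of $P_N$. Recall that $N \subseteq_c P_N$ means: for every $(\alpha, \varphi) \in N$ with $Cn(\alpha) \neq \mathrm{Fm}$, we have $(\alpha, \varphi) \in P_N$. So I would fix such a pair $(\alpha, \varphi) \in N$ with $Cn(\alpha) \neq \mathrm{Fm}$, and aim to show $(\alpha, \varphi) \in P_N$, which by Definition \ref{def:compatible P} requires checking that for every $\psi$ with $(\alpha, \psi) \in N$, we have $Cn(\varphi, \psi) \neq \mathrm{Fm}$.

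First I would set up the argument by contraposition on the internal coherence hypothesis. Suppose, toward a contradiction, that $(\alpha, \varphi) \in P_N$ fails; then there exists some $\psi$ with $(\alpha, \psi) \in N$ and $Cn(\varphi, \psi) = \mathrm{Fm}$. Now I have in hand two norms $(\alpha, \varphi) \in N$ and $(\alpha, \psi) \in N$ with a common antecedent $\alpha$. The goal is to match these against the definition of internal incoherence in Definition \ref{def:normative system AAL}, which asks for $(\alpha, \varphi), (\alpha, \psi) \in N$ such that $Cn(\alpha) \neq \mathrm{Fm}$ and $Cn(\varphi, \psi) = \mathrm{Fm}$. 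Both of these conditions are now available: $Cn(\alpha) \neq \mathrm{Fm}$ by our standing assumption on the pair coming from the definition of $\subseteq_c$, and $Cn(\varphi, \psi) = \mathrm{Fm}$ from the assumed failure. Hence $N$ is internally incoherent, contradicting the hypothesis.

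The argument is essentially a direct unwinding of definitions, so there is no serious obstacle; the only subtlety worth noting is that the definition of internal incoherence is stated precisely so as to make this matching seamless — the antecedent satisfiability clause $Cn(\alpha) \neq \mathrm{Fm}$ in internal incoherence aligns exactly with the clause $Cn(\alpha) \neq \mathrm{Fm}$ in the definition of almost inclusion $\subseteq_c$, and the joint-inconsistency clause $Cn(\varphi, \psi) = \mathrm{Fm}$ is precisely the negation of the defining condition of $P_N$. I would therefore write the proof as a short contrapositive or direct contradiction argument, being careful to quote the two definitions verbatim so the reader sees that no metalogical property (such as $\neg_S$, $\neg_A$, or any closure rule) is needed here — the statement holds at the level of the generic logic $\mathcal{L}$ for any normative system, relying only on the structural definitions rather than on the connective-based properties.
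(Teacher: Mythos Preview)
Your proof is correct and follows essentially the same approach as the paper: fix $(\alpha,\varphi)\in N$ with $Cn(\alpha)\neq\mathrm{Fm}$, suppose for contradiction that $(\alpha,\varphi)\notin P_N$, extract the witness $\psi$ with $(\alpha,\psi)\in N$ and $Cn(\varphi,\psi)=\mathrm{Fm}$, and observe that this exhibits internal incoherence. The paper's version is simply more terse.
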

\begin{proof}
    Let $(\alpha,\varphi) \in N$ with $Cn(\alpha) \neq \mathrm{Fm}$ and suppose for the sake of contradiction that $(\alpha,\varphi) \notin P_N$. Then  a formula $\psi$ exists such that $(\alpha,\psi) \in N$ and $Cn(\varphi,\psi) = \mathrm{Fm}$, making $N$ internally incoherent.
\end{proof}

\begin{proposition} \label{prop:negperm_incl}
    For any $N_1, N_2 \subseteq \mathrm{Fm}$, if $N_1 \subseteq N_2$ then $P_{N_2} \subseteq P_{N_1}$.
\end{proposition}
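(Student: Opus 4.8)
The plan is to unfold the definition of $P_N$ from Definition \ref{def:compatible P} and show directly that membership in $P_{N_2}$ is a stronger condition than membership in $P_{N_1}$ when $N_1\subseteq N_2$. Recall that $P_N$ is defined by a universal quantifier over all obligations $\psi$ attached to $\alpha$ in $N$: the pair $(\alpha,\varphi)$ belongs to $P_N$ precisely when $\varphi$ is logically consistent with every such $\psi$. Enlarging $N$ can only add more obligations to check, so the consistency condition becomes harder to satisfy, which is why the inclusion of permission systems reverses.

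Concretely, I would fix an arbitrary pair $(\alpha,\varphi)\in P_{N_2}$ and show $(\alpha,\varphi)\in P_{N_1}$. By Definition \ref{def:compatible P}, it suffices to take an arbitrary $\psi$ with $(\alpha,\psi)\in N_1$ and verify that $Cn(\varphi,\psi)\neq\mathrm{Fm}$. Since $N_1\subseteq N_2$, from $(\alpha,\psi)\in N_1$ we immediately get $(\alpha,\psi)\in N_2$. Now the hypothesis $(\alpha,\varphi)\in P_{N_2}$, instantiated at this very $\psi$, yields $Cn(\varphi,\psi)\neq\mathrm{Fm}$, which is exactly what we needed. As $\psi$ was arbitrary among the obligations of $N_1$ under $\alpha$, this establishes $(\alpha,\varphi)\in P_{N_1}$, and hence $P_{N_2}\subseteq P_{N_1}$.

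There is essentially no serious obstacle here: the argument is a routine monotonicity check that amounts to observing that the defining condition of $P_N$ is antitone in $N$ because it is a universal statement quantifying over the pairs of $N$. The only point requiring minor care is to make sure the instantiation of the universal quantifier in the definition of $P_{N_2}$ is applied to the same witness $\psi$ coming from $N_1$, which is legitimate precisely because $N_1\subseteq N_2$ transports that witness into $N_2$. No metalogical properties of $\mathcal{L}$ (such as $\neg_S$ or $\mathrm{(WO)}$) are needed, since we work directly with the consistency formulation of Definition \ref{def:compatible P} rather than the negation-based characterization.
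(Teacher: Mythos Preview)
Your proof is correct and essentially the same as the paper's, which argues contrapositively (if $(\alpha,\varphi)\notin P_{N_1}$ then a witness $\psi$ with $(\alpha,\psi)\in N_1\subseteq N_2$ and $Cn(\varphi,\psi)=\mathrm{Fm}$ shows $(\alpha,\varphi)\notin P_{N_2}$), while you argue directly; the underlying idea is identical.
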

\begin{proof}
If $(\alpha, \varphi) \notin P_{N_1}$, then $Cn(\varphi,\psi) = \mathrm{Fm}$ and $(\alpha, \psi) \in N_1 \subseteq N_2$ for some $\psi\in \mathrm{Fm}$. Hence,  $(\alpha,\varphi) \notin P_{N_2}$, as required.
\end{proof}

Consider the following closure rules on  $P_N^c: = (\mathrm{Fm}\times \mathrm{Fm})\setminus P_N$:\footnote{For instance, the rule \infer[]{(\alpha,\varphi)}{(\alpha,\psi) &  \varphi \vdash \psi} reads as follows: if $(\alpha, \psi)\notin P_N$ and $\varphi \vdash \psi$ then $(\alpha, \varphi)\notin P_N$.}

\vspace{1mm}

\begin{center}
\begin{tabular}{lll}
\infer[\mathrm{(\top)}^\wt]{(\top,\bot)}{} & \infer[\mathrm{(\bot)}^\wt]{(\bot,\top)}{} &
\infer[\mathrm{(SI)}^\wt]{(\alpha,\varphi)}{(\beta,\varphi) &  \alpha \vdash \beta} \\[2mm]
\infer[\mathrm{(WO)}^\wt]{(\alpha,\varphi)}{(\alpha,\psi) &  \varphi \vdash \psi} &
\infer[\mathrm{(AND)}^\wt]{ (\alpha, \varphi\vee \psi)}{(\alpha,\varphi) & (\alpha, \psi)} &
\infer[\mathrm{(OR)}^\wt]{ (\alpha\vee \beta, \varphi)}{(\alpha,\varphi) & (\beta, \varphi)}
 \\[2mm]
\infer[\mathrm{(CT)}^\wt]{(\alpha, \psi)}{(\alpha,\varphi) \in N& (\alpha \wedge \varphi, \psi)} & \infer[\mathrm{(EX)}^\wt]{(\alpha , \varphi)}{(\alpha,\varphi\rightarrow  \psi)\in N & Cn(\alpha, \psi) = \mathrm{Fm}} 
\end{tabular}
\end{center}
The relationship between each rule $\mathrm{(X)}^\wt$ and its corresponding rule $\mathrm{(X)}$ is similar to the one between the rule scheme $\mathrm{(HR)}$ and $\mathrm{(HR)}^{-1}$ discussed in \cite[Section 2]{Makinson03}.\footnote{The main difference between the study of the properties of conditional permissions in \cite{Makinson03} and  the present study is that the former is developed in terms of the closure properties of $P_N$ itself, whereas  the present one is carried out in terms of the closure properties of the {\em complement} of $P_N$. One reason for this is that $P_N^c$ can be understood as a system of {\em prohibitions}, and hence as a particular type of normative system, studying which in terms of rules formulated as Horn-type conditions seems to provide a greater conceptual uniformity. This uniformity is also reflected in  the properties of the algebraic structures which are used as a semantic environment for permission systems in input/output logic in the companion paper \cite{part2}. We refer to this paper for an expanded  discussion on this issue.} Specifically, each  rule $\mathrm{(X)}^\wt$ has been obtained by reading off the equivalence ``$(\alpha, \varphi)\in N$ iff $(\alpha,\neg\varphi)\in P_N^c$'' from the corresponding rule $\mathrm{(X)}$  and then applying manipulations which yield logically equivalent conditions in classical logic. For instance, as to $\mathrm{(\top)^\wt}$, we proceed as follows: $(\top, \top)\in N$ iff $ (\top, \neg \top)\in P_N^c$ iff $(\top, \bot)\in P_N^c$; as to $\mathrm{(WO)^\wt}$, we rewrite ``$(\alpha, \psi)\in N$ and $\psi\vdash \varphi$ entail $(\alpha, \varphi)\in N$'' as ``$(\alpha, \neg \psi)\in P_N^c$ and $\neg\varphi\vdash \neg\psi$ entail $(\alpha, \neg \varphi)\in P_N^c$'', then we instantiate  $\psi: = \neg\psi$ and $\varphi: = \neg\varphi$ and use the fact that classical negation is involutive.  Of course,  having used classically valid logical equivalences to generate these rules does not imply that these rules are mere reformulations of the closure rules for normative systems in every context. The picture is more nuanced, as  the next proposition shows.
\begin{proposition} \label{prop:negperm_list} For any input/output logic $\mathbb{L} = (\mathcal{L}, N)$,  
	 \begin{enumerate}
	     \item If $\bot_P$ and $\top_W$ hold for $\mathcal{L}$, $P^c_N$ is closed under $(\top)^\wt$ iff $(\top,\psi) \in N$ for some $\psi\in \mathrm{Fm}$.
	     \item If $\bot_P$ and $\top_W$ hold for $\mathcal{L}$ and $N$ is closed under $\mathrm{(WO)}$, then $N$ is closed under $(\top)$ iff $P_N^c$ is closed under $(\top)^\wt$.
	     \item If $\bot_P$ and $\top_W$ hold for $\mathcal{L}$, $P^c_N$ is closed under $(\bot)^\wt$ iff  $(\bot, \psi) \in N$   for some $\psi\in \mathrm{Fm}$ s.t.~$Cn(\psi) = \mathrm{Fm}$.
	     \item
      If $\bot_P$ and $\top_W$ hold for $\mathcal{L}$ and $N$ is closed under $\mathrm{(WO)}$, then $N$ is closed under $(\bot)$ iff $P_N^c$ is closed under $(\bot)^\wt$.
	     \item \label{item:closure under wowt} $P^c_N$ is closed under $\mathrm{(WO)}^\wt$.
	     \item \label{item:closure under siwt}If $N$ is closed under $\mathrm{(SI)}$, then $P^c_N$ is closed under $(\mathrm{SI})^\wt$.
	     \item If   $\bot_P$, $\top_P$, $\neg_I$, $\neg_A$, and $\neg_P$ hold for $\mathcal{L}$ and $N$ is closed under  $(\mathrm{WO})$, then  $N$ is closed under $\mathrm{(SI)}$ iff $P_N^c$ is closed under $\mathrm{(SI)}^\wt$.
	     \item \label{item:closure under andwt}  If  $\wedge_P$ and $\vee_S$ hold for $\mathcal{L}$  and $N$ is closed under $(\mathrm{AND})$, then $P^c_N$ is closed under $(\mathrm{AND})^\wt$.
	     \item If  $\bot_P$, $\top_P$, $\neg_I$, $\neg_A$, and $\neg_P$ hold for $\mathcal{L}$ and $N$ is closed under $\mathrm{(WO)}$, then $N$ is closed under $\mathrm{(AND)}$ iff $P_N^c$ is closed under $\mathrm{(AND)}^\wt$.
	     \item \label{item:closure under orwt} If $\vee_S$ holds for $\mathcal{L}$  and $N$ is closed under $\mathrm{(OR)}$ and $\mathrm{(WO)}$, then $P^c_N$ is closed under $\mathrm{(OR)}^\wt$.
	     \item If $\bot_P$, $\top_P$, $\neg_I$, $\neg_A$, and $\neg_P$ hold for $\mathcal{L}$ and $N$ is closed under  $(\mathrm{WO})$, then  $N$ is closed under $\mathrm{(OR)}$ iff $P_N^c$ is closed under $\mathrm{(OR)}^\wt$.
	     \item \label{item:closure under ctwt}  If $N$ is closed under $\mathrm{(CT)}$, then $P^c_N$ is closed under $\mathrm{(CT)}^\wt$.
	     \item If  $\bot_P$, $\top_P$, $\neg_I$, $\neg_A$, and $\neg_P$ hold for $\mathcal{L}$ and $N$ is closed under $(\mathrm{WO})$, then $N$ is closed under $\mathrm{(CT)}$ iff $P^c_N$ is closed under $\mathrm{(CT)}^\wt$.
	 \end{enumerate}
\end{proposition}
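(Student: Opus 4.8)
The plan is to treat the two implications separately. The left-to-right direction --- that closure of $N$ under $\mathrm{(CT)}$ forces closure of $P^c_N$ under $\mathrm{(CT)}^\wt$ --- is already available as item \ref{item:closure under ctwt}, which holds without the extra metalogical assumptions; so only the converse genuinely needs the full strength of $\bot_P$, $\top_P$, $\neg_I$, $\neg_A$, and $\neg_P$. The role of these hypotheses is to recover a clean negation-duality between $N$ and $P^c_N$, after which the converse becomes a purely formal translation of $\mathrm{(CT)}^\wt$ back into $\mathrm{(CT)}$.

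First I would establish the duality $P^c_N = \{(\alpha,\varphi)\mid (\alpha,\neg\varphi)\in N\}$. For this it suffices to invoke the proposition stated immediately after Definition \ref{def:compatible P}, which gives $P_N = \{(\alpha,\varphi)\mid (\alpha,\neg\varphi)\notin N\}$ whenever $\neg_S$ and $\neg_A$ hold and $N$ is closed under $\mathrm{(WO)}$. Here $\neg_A$ is assumed outright, while $\neg_S$ is \emph{not} among the hypotheses and must be derived: by Proposition \ref{prop:preliminary_properties}.\ref{prop:strong_negation} it follows from $\bot_P$, the $\neg_{Ir}$-half of $\neg_I$, and $\neg_P$, once we note that $\top_P$ supplies a term $\top$ with $\top\in Cn(\top)=Cn(\varnothing)$, so that $Cn(\varnothing)\neq\varnothing$. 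Complementing the identity for $P_N$ yields the displayed duality. I would then promote it to the membership equivalence $(\alpha,\chi)\in N$ iff $(\alpha,\neg\chi)\in P^c_N$, valid for all $\alpha,\chi$: this uses $\neg_I$, which gives $\neg\neg\chi\equiv\chi$, together with closure of $N$ under $\mathrm{(WO)}$ applied in both directions to replace $\neg\neg\chi$ by $\chi$ in the consequent.

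With this equivalence in hand the converse is immediate. Assume $P^c_N$ is closed under $\mathrm{(CT)}^\wt$, and suppose $(\alpha,\varphi)\in N$ and $(\alpha\wedge\varphi,\psi)\in N$; the goal is $(\alpha,\psi)\in N$. Translating the second hypothesis through the equivalence gives $(\alpha\wedge\varphi,\neg\psi)\in P^c_N$. Now apply $\mathrm{(CT)}^\wt$ with its $N$-premise instantiated by $(\alpha,\varphi)\in N$ and its $P^c_N$-premise by $(\alpha\wedge\varphi,\neg\psi)\in P^c_N$ --- the conjunction $\alpha\wedge\varphi$ matches exactly --- so the rule outputs $(\alpha,\neg\psi)\in P^c_N$. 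Translating back through the equivalence delivers $(\alpha,\psi)\in N$, as required.

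The mathematics here is light once the duality is set up, so the only real obstacle is bookkeeping: one must verify that $\neg_S$ truly follows from the stated hypotheses (it is not assumed), and one must keep the double negations straight when moving between $N$ and $P^c_N$ along $(\alpha,\chi)\in N \Leftrightarrow (\alpha,\neg\chi)\in P^c_N$. This is exactly the template underlying the preceding biconditional items of the proposition: the strong negation properties serve only to make $P^c_N$ the ``$\neg$-image'' of $N$, thereby reducing each $(X)^\wt$-closure statement to its $(X)$-counterpart.
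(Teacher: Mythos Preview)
Your proof is correct and follows essentially the same route as the paper. The only presentational difference is that you first package the duality $(\alpha,\chi)\in N \Leftrightarrow (\alpha,\neg\chi)\in P^c_N$ as a standalone equivalence (invoking the characterization of $P_N$ proved right after Definition~\ref{def:compatible P}), whereas the paper works directly from the raw definition of $P_N$ at each step: it obtains $(\alpha\wedge\varphi,\neg\psi)\notin P_N$ from $\neg_A$ alone, applies $\mathrm{(CT)}^\wt$, and then unpacks $(\alpha,\neg\psi)\notin P_N$ to extract a witness $\varphi'$ with $Cn(\neg\psi,\varphi')=\mathrm{Fm}$, finishing via $\neg_S$ and $\neg_I$ to get $\varphi'\vdash\psi$ and $\mathrm{(WO)}$. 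The underlying use of Proposition~\ref{prop:preliminary_properties}.\ref{prop:strong_negation} to secure $\neg_S$, and of $\neg_I$ to collapse double negations, is identical in both arguments; your version simply front-loads this into one clean lemma rather than repeating it inline.
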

\begin{proof}
    \begin{enumerate}
        \item $(\top,\bot)\in P_{N}^c$ iff $(\top,\bot)\notin P_{N}$ iff $(\top, \psi)\in N$ and $Cn(\bot, \psi) = \mathrm{Fm}$ for some $\psi\in \mathrm{Fm}$, iff $(\top, \psi)\in N$  for some $\psi\in \mathrm{Fm}$, since $\bot_P$ implies that the second conjunct is always true.
        \item By  the previous item, it is enough to show that $(\top, \top)\in N$ iff $(\top, \psi)\in N$ for some $\psi\in \mathrm{Fm}$. This equivalence is guaranteed by the assumption that $N$ be closed under $\mathrm{(WO)}$.
        \item $(\bot, \top)\in P_{N}^c$ iff $(\bot, \top)\notin P_{N}$ iff $(\bot, \psi)\in N$ and $Cn(\top, \psi) = \mathrm{Fm}$ for some $\psi\in \mathrm{Fm}$ iff $(\bot, \psi)\in N$  for some $\psi\in \mathrm{Fm}$ s.t.~$Cn(\psi) = \mathrm{Fm}$. The last equivalence holds because of $\top_W$.
        \item By  the previous item, it is enough to show that $(\bot, \bot)\in N$ iff $(\bot, \psi)\in N$ for some $\psi\in \mathrm{Fm}$ s.t.~$Cn(\psi) = \mathrm{Fm}$. This equivalence is guaranteed by $\psi\vdash \bot$ and the assumption that $N$ be closed under $\mathrm{(WO)}$.
        \item Let $\alpha, \varphi,\psi\in \mathrm{Fm}$ s.t.~$\varphi \vdash \psi$ and $(\alpha,\psi) \notin P_N$. Hence, some  $\psi'\in \mathrm{Fm}$ exists s.t.~$(\alpha, \psi') \in N$ and $Cn(\psi,\psi')=\mathrm{Fm}$. Since $\varphi\vdash \psi$, the latter identity implies that $Cn(\varphi,\psi')=\mathrm{Fm}$.  Thus, $\psi'$ is a witness for $(\alpha,\varphi) \notin P_N$, as required. 
        \item Let $\alpha, \beta,\varphi\in \mathrm{Fm}$ s.t.~$\alpha \vdash \beta$ and $(\beta,\varphi) \notin P_N$. Hence,  $(\beta,\psi) \in N$ and $  Cn(\varphi,\psi) = \mathrm{Fm}$ for some $\psi\in \mathrm{Fm}$. Since $N$ is closed under $\mathrm{(SI)}$, from $\alpha\vdash \beta$ and $(\beta,\psi) \in N$, it follows that $(\alpha, \psi) \in N$. Hence, $\psi$ is also a witness to $(\alpha,\varphi) \notin P_N$, as required.
        \item By the previous item, the proof is complete if we  show that $P_N^c$ being closed under $\mathrm{(SI)}^\wt$ implies that $N$ is closed under $\mathrm{(SI)}$. Let $\alpha, \beta,\varphi\in \mathrm{Fm}$ s.t.~$\alpha \vdash \beta$ and  $(\beta,\varphi) \in N$, and let us show that $(\alpha, \varphi)\in N$. By $\neg_A$ and the definition of $P_N$, from $Cn(\varphi, \neg \varphi) = \mathrm{Fm}$ it follows that $(\beta, \neg \varphi) \notin P_N$, which implies, by $(\mathrm{SI})^\wt$, that $(\alpha,\neg \varphi) \notin P_N$, i.e.~$(\alpha,\varphi') \in N$ and $Cn(\neg \varphi, \varphi') = Fm$  for some $\varphi'\in \mathrm{Fm}$. By Proposition \ref{prop:preliminary_properties}.\ref{prop:strong_negation}, the last identity implies that $\varphi' \vdash \neg \neg \varphi$,  which implies $\varphi' \vdash  \varphi$ by $\neg_I$. Hence,  by $\mathrm{(WO)}$, we conclude $(\alpha,  \varphi) \in N$, as required.

        \item Arguing contrapositively, let $\alpha, \varphi,\psi\in \mathrm{Fm}$ s.t.~$(\alpha,\varphi\vee \psi)\in P_N$. By definition and $\vee_S$, this means that $Cn(\chi, \varphi)\cap Cn(\chi, \psi) = Cn(\chi, \varphi\vee\psi) = \mathrm{Fm}$ for every $\chi\in \mathrm{Fm}$ s.t.~$(\alpha, \chi)\in N$. Hence, for every such $\chi$, either  $Cn(\chi, \varphi) = \mathrm{Fm} = Cn(\chi, \psi)$, which implies that $(\alpha, \varphi)\in P_N$ and  $(\alpha, \psi)\in P_N$, as required.
        
        \item By the previous item, the proof is complete if we  show that $P_N^c$ being closed under $\mathrm{(AND)}^\wt$ implies that $N$ is closed under $\mathrm{(AND)}$. Let $\alpha, \varphi,\psi\in \mathrm{Fm}$ s.t.~$(\alpha,\varphi)\in N$ and $(\alpha,\psi) \in N$. By $\neg_A$ and the definition of $P_N$, from  $Cn(\neg \varphi,\varphi) =\mathrm{Fm} = Cn(\neg \psi,\psi)$ we deduce that $(\alpha, \neg \varphi)\notin P_N$ and $(\alpha, \neg \psi) \notin P_N$, which implies,  by $\mathrm{(AND)}^\wt$, that $ (\alpha, \neg \varphi \vee \neg \psi)\notin P_N$. This entails, since $P_N^c$ is closed under  $(\mathrm{WO})^\wt$ (cf.~item 8) and $\neg(\varphi\wedge \psi)\vdash \neg\varphi\vee \neg\psi $ by Lemma \ref{lemma:antitonicity of neg}, that $(\alpha,\neg (\varphi\wedge\psi)) \notin P_N$. By definition, this means that  $(\alpha, \varphi') \in N$ and $Cn(\neg (\varphi\wedge\psi),\varphi') = \mathrm{Fm}$ for some $\varphi'\in \mathrm{Fm}$. Hence, by Proposition \ref{prop:preliminary_properties}.\ref{prop:strong_negation} and $\neg_I$, $\varphi' \vdash \neg \neg (\varphi\wedge\psi)\vdash\varphi\wedge\psi$. By $\mathrm{(WO)}$, this implies that $(\alpha, \varphi\wedge \psi) \in N$, as required.
        \item Let $\alpha, \beta, \varphi\in \mathrm{Fm}$ s.t.~$(\alpha,\varphi)\notin P_N$ and $ (\beta,\varphi) \notin P_N$. Hence, $(\alpha,\psi_1)\in N$ and $(\beta, \psi_2) \in N$ for some $\psi_1, \psi_2\in \mathrm{Fm}$ s.t.~$Cn(\varphi,\psi_1) = \mathrm{Fm} = Cn(\varphi,\psi_2)$.  By $(\mathrm{WO})$ and $(\mathrm{OR})$ and $\vee_S$, this implies that $(\alpha \vee \beta, \psi_1 \vee \psi_2) \in N$. Moreover, $\vee_S$ entails that $Cn(\varphi, \psi_1\vee\psi_2)= Cn(\varphi, \psi_1)\cap Cn(\varphi, \psi_2)=\mathrm{Fm}\cap\mathrm{Fm}=\mathrm{Fm}$. This shows that $\psi_1\vee\psi_2$ is a witness for $(\alpha\vee \beta, \varphi)\notin P_N$, as required.
        \item By the previous item, the proof is complete if we  show that $P_N^c$ being closed under $\mathrm{(OR)}^\wt$ implies that $N$ is closed under $\mathrm{(OR)}$. Let $\alpha, \beta,\varphi\in \mathrm{Fm}$ s.t.~$(\alpha,\varphi)\in N$ and $(\beta,\varphi) \in N$. By $\neg_A$ and the definition of $P_N$, from  $Cn(\neg \varphi,\varphi) =\mathrm{Fm}$ we deduce that $(\alpha, \neg \varphi)\notin P_N$ and $(\beta, \neg \varphi) \notin P_N$, which implies,  by $\mathrm{(OR)}^\wt$, that $ (\alpha \vee \beta, \neg \varphi )\notin P_N$. By definition, this means that  $(\alpha\vee \beta, \varphi') \in N$ and $Cn(\neg \varphi,\varphi') = \mathrm{Fm}$ for some $\varphi'\in \mathrm{Fm}$. Hence, by Proposition \ref{prop:preliminary_properties}.\ref{prop:strong_negation} and $\neg_I$, $\varphi' \vdash \neg \neg \varphi\vdash\varphi$. By $\mathrm{(WO)}$, this implies that $(\alpha\vee \beta, \varphi) \in N$, as required.
        \item Let $\alpha, \varphi, \psi\in \mathrm{Fm}$ s.t.~$(\alpha, \varphi) \in N$ and $(\alpha \wedge \varphi, \psi) \notin P_N$. Hence, $(\alpha \wedge \varphi, \psi') \in N$ for some $\psi'\in \mathrm{Fm}$ s.t.~$Cn(\psi,\psi') =  \mathrm{Fm}$. By $\mathrm{(CT)}$, this implies that $(\alpha,\psi') \in N$, hence $\psi'$ is the witness for $(\alpha,\psi)\notin P_N$, as required.
        \item By the previous item, the proof is complete if we  show that $P_N^c$ being closed under $\mathrm{(CT)}^\wt$ implies that $N$ is closed under $\mathrm{(CT)}$. Let $\alpha, \varphi,\psi\in \mathrm{Fm}$ s.t.~$(\alpha,\varphi)\in N$ and $(\alpha\wedge \varphi,\psi) \in N$. By $\neg_A$ and the definition of $P_N$, from  $ Cn(\neg \psi,\psi) = \mathrm{Fm}$ we deduce that  $(\alpha\wedge \varphi, \neg \psi) \notin P_N$, which implies,  by $\mathrm{(CT)}^\wt$, that $ (\alpha,  \neg \psi)\notin P_N$. By definition, this means that  $(\alpha, \varphi') \in N$ and $Cn(\neg \psi,\varphi') = \mathrm{Fm}$ for some $\varphi'\in \mathrm{Fm}$. Hence, by Proposition \ref{prop:preliminary_properties}.\ref{prop:strong_negation} and $\neg_I$, $\varphi' \vdash \neg \neg \psi\vdash\psi$. By $\mathrm{(WO)}$, this implies that $(\alpha, \psi) \in N$, as required.

    \end{enumerate}
\end{proof}
For any input/output logic $\mathbb{L} = (\mathcal{L}, N)$ and any $1\leq i\leq 4$, we let $P_i: = P_{N_i}$.
\begin{corollary}
 For any input/output logic $\mathbb{L} = (\mathcal{L}, N)$, if $\wedge_P$, $\vee_S$, $\bot_P$, and $\top_W$ hold for $\mathcal{L}$, then
 $P_i^c$ for $1\leq i\leq 4$ is closed  under the rules indicated in the following table.
\end{corollary}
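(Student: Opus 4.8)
The plan is to read this off directly from Proposition \ref{prop:negperm_list}, exploiting the single structural fact that each $N_i$ is, by its very definition in Table \ref{table2}, already closed under the rules associated with it. Thus the corollary is not a fresh argument but a matter of bookkeeping: for each closure rule $\mathrm{(X)}$ appearing in the defining set of $N_i$, I invoke the item of the proposition stating ``$N$ closed under $\mathrm{(X)}$ implies $P_N^c$ closed under $\mathrm{(X)}^\wt$'', after checking that the ambient hypotheses $\wedge_P$, $\vee_S$, $\bot_P$, $\top_W$ supply exactly the metalogical properties that item requires. Since $P_i = P_{N_i}$ by definition, this yields the claimed closure of $P_i^c$.

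Concretely, I would proceed rule by rule. Every $N_i$ is closed under $\mathrm{(WO)}$, so item \ref{item:closure under wowt} (which needs no extra assumption) gives that each $P_i^c$ is closed under $\mathrm{(WO)}^\wt$. Every $N_i$ is closed under $\mathrm{(SI)}$, so item \ref{item:closure under siwt} yields closure under $\mathrm{(SI)}^\wt$. As all $N_i$ are closed under $\mathrm{(AND)}$ and the hypotheses provide $\wedge_P$ and $\vee_S$, item \ref{item:closure under andwt} gives closure under $\mathrm{(AND)}^\wt$. For the reusability rule, $N_3$ and $N_4$ are closed under $\mathrm{(CT)}$, so item \ref{item:closure under ctwt} delivers closure of $P_3^c$ and $P_4^c$ under $\mathrm{(CT)}^\wt$. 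Finally, $N_2$ and $N_4$ are closed under $\mathrm{(OR)}$ (and all $N_i$ under $\mathrm{(WO)}$), so with $\vee_S$ in hand item \ref{item:closure under orwt} gives closure of $P_2^c$ and $P_4^c$ under $\mathrm{(OR)}^\wt$. For the nullary initial rules, since each $N_i$ contains $(\top,\top)$ and $(\bot,\bot)$ and is closed under $\mathrm{(WO)}$, and since $\bot_P$ and $\top_W$ hold, the equivalences in the second and fourth items of Proposition \ref{prop:negperm_list} apply and give closure of each $P_i^c$ under $(\top)^\wt$ and $(\bot)^\wt$.

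Assembling these observations according to which rules appear in the defining set of each $N_i$ produces the table entry by entry. There is no genuine obstacle beyond this assembly, since each line is immediate once the correct item of Proposition \ref{prop:negperm_list} is cited; the only point deserving care is to confirm that the four global hypotheses are jointly sufficient for every item used. In particular, I would double-check that $\mathrm{(AND)}^\wt$ genuinely needs $\wedge_P$ \emph{together with} $\vee_S$, that $\mathrm{(OR)}^\wt$ relies on $\vee_S$ and the closure of $N_i$ under $\mathrm{(WO)}$, and that the nullary rules rest on $\bot_P$ and $\top_W$ — all of which are furnished by the hypotheses, while $\mathrm{(WO)}^\wt$, $\mathrm{(SI)}^\wt$, and $\mathrm{(CT)}^\wt$ require none of the metalogical assumptions and follow purely from the corresponding closure of $N_i$.
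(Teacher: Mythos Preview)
Your approach is correct and is exactly what the paper intends: the corollary is immediate bookkeeping from Proposition \ref{prop:negperm_list}, and the paper accordingly gives no separate proof. One small slip: you assert that each $N_i$ contains $(\bot,\bot)$ and hence that $P_i^c$ is closed under $(\bot)^\wt$, but the defining rule set for each $N_i$ in Table \ref{table2} includes only $(\top)$, not $(\bot)$, so $(\bot,\bot)$ need not belong to $N_i$; correspondingly, the table in the corollary does not list $(\bot)^\wt$, and you should simply drop that part of the argument.
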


\begin{center}
	\begin{tabular}{ l l}
		\hline
		$P_i^c$ & Rules   \\
		\hline	
		$P_{1}^c$  & $ \mathrm{(\top)^\wt, (SI)^\wt, (WO)^\wt, (AND)^\wt}$   \\
		$P_{2}^c$  & $\mathrm{ (\top)^\wt, (SI)^\wt, (WO)^\wt, (AND)^\wt, (OR)^\wt}$  \\
		$P_{3}^c$  & $\mathrm{ (\top)^\wt, (SI)^\wt, (WO)^\wt, (AND)^\wt, (CT)^\wt}$  \\
		$P_{4}^c$  & $\mathrm{ (\top)^\wt, (SI)^\wt, (WO)^\wt,(AND)^\wt, (OR)^\wt, (CT)^\wt}$ \\
		\hline
	\end{tabular}
\end{center}

\subsection{Dual negative permission systems} \label{ssec:dual negative}
The  perspective afforded by the general setting of selfextensional logics 
 makes it possible to consider a notion of {\em dual conditional permission system} $D_N$ associated with a given normative system $N$, which, in the setting of classical propositional logic, is absorbed by the usual notion of negative permission:
\[(\alpha, \varphi)\in D_N \text{ iff } (\neg\alpha, \varphi)\notin N \text{ iff } (\neg\alpha, \neg\varphi)\in P_N.\]
 Similarly to the generalized definition of $P_N$ introduced in the previous subsection,  a more general version of $D_N$ can be introduced, namely:

\[D_N: = \{(\alpha, \varphi)\mid \exists\beta((\beta, \varphi)\notin N\ \&\  Cn(\alpha, \beta) = \mathrm{Fm})\},\]
    
\noindent which cannot be subsumed by the definition of $P_N$. While the notion of negative permission $P_N$ intuitively characterizes those states of affair $\alpha$ and $\varphi$ which can both be the case without generating a violation of the  normative system $N$, the dual negative permission system $D_N$ characterizes those states of affair  $\alpha$ and $\varphi$ which can both {\em fail} to be the case without generating a violation of the  normative system $N$. 

\begin{proposition}
    For any selfextensional logic $\mathcal{L} = (\mathrm{Fm}, \vdash)$ with $\neg_S$ and $\neg_A$, and any normative system $N$ on $\mathrm{Fm}$ which is closed under $\mathrm{(SI)}$, 
    \[D_N = \{(\alpha, \varphi)\mid (\neg\alpha, \varphi)\notin N\}.\]    \end{proposition}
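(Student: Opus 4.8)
The plan is to establish the two inclusions separately, mirroring the argument for $P_N$ in the preceding proposition but with the roles dualised: where that proof exploited closure under $\mathrm{(WO)}$ acting on consequents, here I would exploit closure under $\mathrm{(SI)}$ acting on antecedents, and correspondingly $\neg_A$ and $\neg_S$ are applied to $\alpha$ rather than to $\varphi$.

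For the inclusion $\supseteq$, suppose $(\neg\alpha, \varphi)\notin N$. I would exhibit $\beta := \neg\alpha$ as the witness demanded by the definition of $D_N$: indeed $(\beta, \varphi) = (\neg\alpha, \varphi)\notin N$ by hypothesis, while $Cn(\alpha, \beta) = Cn(\alpha, \neg\alpha) = \mathrm{Fm}$ by $\neg_A$. Hence $(\alpha, \varphi)\in D_N$. This half uses only $\neg_A$ and is routine.

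For the inclusion $\subseteq$, suppose $(\alpha, \varphi)\in D_N$ and fix a witness $\beta$ with $(\beta, \varphi)\notin N$ and $Cn(\alpha, \beta) = \mathrm{Fm}$. Applying $\neg_S$ to $Cn(\alpha,\beta) = Cn(\beta, \alpha) = \mathrm{Fm}$ (instantiating the schema with $\beta$ in the first slot and $\alpha$ in the second) yields $\neg\alpha\in Cn(\beta)$, i.e.\ $\beta\vdash\neg\alpha$. I then argue by contradiction: if $(\neg\alpha, \varphi)$ were in $N$, then closure under $\mathrm{(SI)}$ together with $\beta\vdash\neg\alpha$ would force $(\beta, \varphi)\in N$, contradicting the choice of $\beta$. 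Therefore $(\neg\alpha, \varphi)\notin N$, as required.

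The only point requiring care --- more a bookkeeping subtlety than a genuine obstacle --- is to feed the correct instance of $\neg_S$ into $\mathrm{(SI)}$: the inconsistency $Cn(\alpha, \beta) = \mathrm{Fm}$ must be converted into $\beta\vdash\neg\alpha$ (rather than $\alpha\vdash\neg\beta$), since it is the antecedent $\neg\alpha$ of the putative norm that has to be weakened down to $\beta$ via $\mathrm{(SI)}$. With the entailment oriented this way the $\mathrm{(SI)}$ step is immediate, and notably $\mathrm{(WO)}$ is never invoked, exactly as $\mathrm{(SI)}$ was irrelevant to the dual statement for $P_N$.
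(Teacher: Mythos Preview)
Your proof is correct and follows essentially the same approach as the paper's: both directions use the identical witnesses and steps (taking $\beta:=\neg\alpha$ with $\neg_A$ for $\supseteq$, and using $\neg_S$ to extract $\beta\vdash\neg\alpha$ followed by contraposition via $\mathrm{(SI)}$ for $\subseteq$). The paper's version is more terse, omitting your discussion of the orientation of $\neg_S$, but the mathematical content is the same.
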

    \begin{proof}
        For the right-to-left inclusion, take $\beta: = \neg \alpha$ as the witness; by $\neg_A$, we have $Cn(\alpha, \neg\alpha) = \mathrm{Fm}$, as required.
        Conversely,  let $\alpha, \varphi\in \mathrm{Fm}$ s.t.~$(\beta, \varphi)\notin N$ for some $\beta\in \mathrm{Fm}$ s.t.~$Cn(\alpha, \beta) = \mathrm{Fm}$. By  $\neg_S$, this implies that $\beta\vdash \neg \alpha$. Hence, $(\neg\alpha,  \varphi)\notin N$, for otherwise, by $\mathrm{(SI)}$,  $(\beta, \varphi)\in N$, against the assumption.
    \end{proof}

We introduce the following closure rules  on  $D_N^c: = (\mathrm{Fm}\times \mathrm{Fm})\setminus D_N$:

\vspace{1mm}

\begin{center}
\begin{tabular}{lll}
\infer[\mathrm{(\top)}^\tw]{(\bot,\top)}{} & \infer[\mathrm{(\bot)}^\tw]{(\top,\bot)}{} &
\infer[\mathrm{(SI)}^\tw]{(\beta,\varphi)}{(\alpha,\varphi) &  \alpha \vdash \beta} \\[2mm]
\infer[\mathrm{(WO)}^\tw]{(\alpha,\psi)}{(\alpha,\varphi) &  \varphi \vdash \psi} &
\infer[\mathrm{(AND)}^\tw]{ (\alpha, \varphi\wedge \psi)}{(\alpha,\varphi) & (\alpha, \psi)} &
\infer[\mathrm{(OR)}^\tw]{ (\alpha\wedge \beta, \varphi)}{(\alpha,\varphi) & (\beta, \varphi)}
 \\[2mm]
$\infer[\mathrm{(CT)}^\tw]{(\alpha, \psi)}{( \alpha, \varphi)& (\varphi \pdla\alpha, \psi)\in N}$
&
\end{tabular}
\end{center}

\begin{proposition}
\label{prop:dual negperm_list}
For any input/output logic $\mathbb{L} = (\mathcal{L}, N)$, 
\begin{enumerate}
    \item If $\bot_P$ and $\top_W$ hold for $\mathcal{L}$, then $D^c_N$ is closed under $(\mathrm{\top})^\tw$ iff  $(\beta, \top)\in N$ for all $\beta \in \mathrm{Fm}$.
    \item If $\bot_P$ and $\top_W$ hold for $\mathcal{L}$ and $N$ is closed under $\mathrm{(SI)}$, then $D^c_N$ is closed under $(\mathrm{\top})^\tw$ iff $N$ is closed under $(\top)$.
     \item If $\bot_P$ and $\top_W$ hold for $\mathcal{L}$, $D^c_N$ is closed under $(\bot)^\tw$ iff  for all $\beta \in \mathrm{Fm}$, if $Cn(\beta) = \mathrm{Fm}$ then $(\beta, \bot) \in N$.
    \item If $\bot_P$ and $\top_W$ hold for $\mathcal{L}$ and $N$ is closed under $\mathrm{(SI)}$, then $D^c_N$ is closed under $(\mathrm{\bot})^\tw$ iff $N$ is closed under $\mathrm{(\bot)}$. 
    \item  $D^c_N$ is closed under $(\mathrm{SI})^\tw$.
    \item If $N$ is closed under $\mathrm{(WO)}$, then $D^c_N$ is closed under $(\mathrm{WO})^\tw$.
    \item If $\bot_P$, $\top_P$, $\neg_I$, $\neg_A$, and $\neg_P$ hold for $\mathcal{L}$, and $N$ is closed under $\mathrm{(SI)}$, then $N$ is closed under $\mathrm{(WO)}$ iff $D^c_N$ is closed under $(\mathrm{WO})^\tw$.
    \item If $\wedge_P$ holds for $\mathcal{L}$ and $N$ is closed under $\mathrm{(AND)}$, then $D^c_N$ is closed under $(\mathrm{AND})^\tw$.
    \item If $\wedge_P$, $\bot_P$, $\top_P$, $\neg_I$, $\neg_A$, and $\neg_P$ hold for $\mathcal{L}$, and $N$ is closed under $\mathrm{(SI)}$, then $N$ is closed under $\mathrm{(AND)}$ iff $D^c_N$ is closed under $(\mathrm{AND})^\tw$.
    \item If $\wedge_P$, $\vee_P$, $\bot_P$, $\top_P$, $\neg_I$, $\neg_A$, and $\neg_P$ hold for $\mathcal{L}$, and $N$ is closed under $\mathrm{(SI)}$ and  $\mathrm{(OR)}$, then $D^c_N$ is closed under $(\mathrm{OR})^\tw$.
    \item If $\wedge_P$, $\vee_S$, $\bot_P$, $\top_P$, $\neg_I$, $\neg_A$, and $\neg_P$ hold for $\mathcal{L}$, and $N$ is closed under $\mathrm{(SI)}$, then $N$ is closed under $\mathrm{(OR)}$ iff $D^c_N$ is closed under $(\mathrm{OR})^\tw$.
    \item If  $\vee_P$ and $\pdla_P$ hold for $\mathcal{L}$, and $N$ is closed under $\mathrm{(WO)}$, $\mathrm{(SI)}$, and $\mathrm{(EX)}$, then $N$ being closed  under $\mathrm{(CT)}$ implies $D^c_N$ being closed under $(\mathrm{CT})^\tw$. 
    \item If $\pdla_P$, $\wedge_S$, $\vee_P$,  $\top_P$  hold for $\mathcal{L}$ and $\neg_A$, $\neg_S$, $\neg_{Ir}$ and $\sim_A$ hold for $\mathcal{L}$ relative to the same term, and $N$ is closed under $\mathrm{(SI)}$, then $D^c_N$ being closed under $(\mathrm{CT})^\tw$ implies that $N$ is closed under $\mathrm{(CT)}$.
\end{enumerate}
\end{proposition}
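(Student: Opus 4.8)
The plan is to reduce the closure of $N$ under $\mathrm{(CT)}$ to a single well-chosen instance of the rule $\mathrm{(CT)}^\tw$ applied to $D_N^c$, with the involutivity of the negation serving as the essential bridge. First I would record the simplified description of $D_N^c$: since $\neg_S$ and $\neg_A$ hold and $N$ is closed under $\mathrm{(SI)}$, the proposition preceding this one gives $D_N=\{(\alpha,\varphi)\mid(\neg\alpha,\varphi)\notin N\}$, hence $(\alpha,\varphi)\in D_N^c$ iff $(\neg\alpha,\varphi)\in N$. This equivalence is the translation I will use to pass between statements about $N$ and statements about $D_N^c$.

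The key preliminary — and the step I expect to be the main obstacle — is to strengthen $\neg_{Ir}$ to full involution, i.e.\ to prove $\neg\neg\varphi\vdash\varphi$ (property $\neg_{Il}$) under the stated hypotheses. The crucial observation is that $\pdla_P$ together with $\vee_P$ makes the associated algebra a co-Heyting algebra — via the co-implication adjunction ($\varphi\pdla\psi\vdash\chi$ iff $\varphi\vdash\chi\vee\psi$) of Proposition~\ref{prop:preliminary_properties} — and co-Heyting algebras are distributive. Granting distributivity, I would compute, using $\sim_A$ and $\top_P$ (which give $\varphi\vee\neg\varphi\equiv\top$) and $\neg_A$ (which gives $Cn(\neg\varphi,\neg\neg\varphi)=\mathrm{Fm}$, so that $\neg\neg\varphi\wedge\neg\varphi$ is a least element),
\[
\neg\neg\varphi\equiv\neg\neg\varphi\wedge(\varphi\vee\neg\varphi)\equiv(\neg\neg\varphi\wedge\varphi)\vee(\neg\neg\varphi\wedge\neg\varphi)\equiv\neg\neg\varphi\wedge\varphi\vdash\varphi .
\]
Together with $\neg_{Ir}$ this yields $\neg_I$. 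Intuitively, distributivity forces complements to be unique, and both $\varphi$ and $\neg\neg\varphi$ are complements of $\neg\varphi$ by $\neg_A$ and $\sim_A$.

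With involution available, I would finish as follows. Assume $(\mu,\nu)\in N$ and $(\mu\wedge\nu,\rho)\in N$; the goal is $(\mu,\rho)\in N$. Instantiate $\mathrm{(CT)}^\tw$ at $\alpha:=\neg\mu$, $\varphi:=\nu$, $\psi:=\rho$. For the permission premise, $(\neg\mu,\nu)\in D_N^c$ amounts (by the simplified form) to $(\neg\neg\mu,\nu)\in N$, which follows from $(\mu,\nu)\in N$ by $\mathrm{(SI)}$ since $\neg\neg\mu\vdash\mu$ — this is exactly where full involution is indispensable. For the obligation premise $(\nu\pdla\neg\mu,\rho)\in N$, I would show $\nu\pdla\neg\mu\vdash\mu\wedge\nu$ and then conclude by $\mathrm{(SI)}$ from $(\mu\wedge\nu,\rho)\in N$; the entailment holds because, by the $\pdla$-adjunction, it is equivalent to $\nu\vdash(\mu\wedge\nu)\vee\neg\mu$, and distributivity gives $(\mu\wedge\nu)\vee\neg\mu\equiv(\mu\vee\neg\mu)\wedge(\nu\vee\neg\mu)\equiv\nu\vee\neg\mu$, which $\nu$ trivially entails. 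Since $D_N^c$ is closed under $\mathrm{(CT)}^\tw$, the conclusion $(\neg\mu,\rho)\in D_N^c$ holds, i.e.\ $(\neg\neg\mu,\rho)\in N$, and a final application of $\mathrm{(SI)}$ (using $\mu\vdash\neg\neg\mu$ from $\neg_{Ir}$) yields $(\mu,\rho)\in N$, as required.

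It is worth tracking where each hypothesis enters: $\neg_S$, $\neg_A$, and closure under $\mathrm{(SI)}$ underlie the simplified form of $D_N^c$; $\pdla_P$ and $\vee_P$ supply distributivity, which with $\sim_A$, $\top_P$, $\neg_A$, and $\neg_{Ir}$ (all relative to the same term) gives involution; $\wedge_S$ provides the conjunction $\mu\wedge\nu$ and powers the distributive rewritings; and the $\pdla$-adjunction handles the obligation premise. The only genuinely delicate point is the involution lemma: a naive attempt to establish the permission premise directly from $(\mu,\nu)\in N$ fails without $\neg_{Il}$, because $\mathrm{(SI)}$ can only strengthen antecedents and $\neg_{Ir}$ alone gives $\mu\vdash\neg\neg\mu$ in the wrong direction. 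Once involution is secured, the remaining steps are routine rewriting.
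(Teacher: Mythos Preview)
Your proof is correct and follows essentially the same route as the paper: both hinge on the entailment $\nu\pdla\neg\mu\vdash\mu\wedge\nu$, obtained via the $\pdla$-adjunction and the distributive rewriting $(\mu\wedge\nu)\vee\neg\mu\equiv(\mu\vee\neg\mu)\wedge(\nu\vee\neg\mu)\equiv\nu\vee\neg\mu$ using $\sim_A$ and $\top_P$. The paper argues contrapositively and works with the general definition of $D_N$, while you argue directly and use the simplified description $D_N^c=\{(\alpha,\varphi)\mid(\neg\alpha,\varphi)\in N\}$; these are cosmetic differences.

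The substantive point where you go beyond the paper is the involution step. The paper, at the end of its Case~2, writes ``by $\neg_S$ and $\neg_{Ir}$, $\beta\vdash\neg\neg\alpha\vdash\alpha$''; but $\neg_{Ir}$ only yields $\alpha\vdash\neg\neg\alpha$, not the converse, so as written this step is unjustified. You correctly identify that $\neg_{Il}$ is what is actually needed and derive it: $\pdla_P+\vee_P$ makes $(-)\vee\psi$ a right adjoint, hence meet-preserving, which (with $\wedge_S$ supplying conjunction) gives distributivity, and then the complement of $\neg\varphi$ is unique by $\sim_A$, $\top_P$, $\neg_A$, forcing $\neg\neg\varphi\equiv\varphi$. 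So your version is more complete than the paper's on exactly the point you flagged as the main obstacle.
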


\begin{proof}
    \begin{enumerate}
        \item $(\bot,\top)\in D^c_N$ iff $(\beta, \top)\in N$ or $Cn(\bot, \beta)\neq \mathrm{Fm}$ for all $\beta \in \mathrm{Fm}$ iff $(\beta, \top)\in N$ for all $\beta \in \mathrm{Fm}$, since $\bot_P$ implies that the second disjunct is always false. 
        \item By the previous item it is enough to show that $(\top, \top)\in N$  implies $(\beta, \top)\in N$ for all $\beta \in \mathrm{Fm}$. This
equivalence is guaranteed by the assumption that $N$ be closed under (SI).
        \item $(\top,\bot) \in D^c_N$ iff for all $\beta \in \mathrm{Fm}$, if $Cn(\top,\beta) = \mathrm{Fm}$ then $(\beta, \bot) \in N$ iff for all $\beta \in \mathrm{Fm}$, if $Cn(\beta) = \mathrm{Fm}$ then $(\beta, \bot) \in N$. The last equivalence is guaranteed by $\top_W$. 
        \item By the previous item, $(\top,\bot) \in D^c_N$ iff for all $\beta \in \mathrm{Fm}$, if $Cn(\beta) = \mathrm{Fm}$ then $(\beta, \bot) \in N$, iff $(\bot, \bot)\in N$. The last equivalence is guaranteed by $\beta\vdash \bot$ and the assumption that $N$ be closed under (SI).  
        
        \item  Arguing contrapositively, let $\alpha, \beta, \varphi \in \mathrm{Fm}$ s.t.~$\alpha \vdash \beta$ and $(\beta,\varphi) \in D_N$. Hence, some  $\alpha'\in \mathrm{Fm}$ exists s.t.~$(\alpha', \varphi) \notin N$ and $Cn(\beta,\alpha')=\mathrm{Fm}$. Since $\alpha\vdash \beta$, the latter identity implies that $Cn(\alpha,\alpha')=\mathrm{Fm}$.  Thus, $\alpha'$ is a witness for $(\alpha,\varphi) \in D_N$, as required.


        \item   Let $\alpha, \varphi, \psi \in \mathrm{Fm}$ s.t.~$(\alpha,\varphi)\notin D_N$ and $\varphi\vdash\psi$. Hence, for any $\beta \in \mathrm{Fm}$, if $Cn(\alpha, \beta)  = \mathrm{Fm}$, then  $(\beta, \varphi) \in N$. Since $N$ is closed under  $\mathrm{(WO)}$, 
 from $\varphi \vdash \psi$ it follows that $(\beta, \psi) \in N$. This shows that $(\alpha, \psi) \notin D_N$, as required.
        
        \item   By the previous item, the proof is complete if we  show that $D_N^c$ being closed under $\mathrm{(WO)}^\tw$ implies that $N$ is closed under $\mathrm{(WO)}$. Arguing contrapositively, let $\alpha, \varphi, \psi \in \mathrm{Fm}$ s.t.~$(\alpha, \psi) \notin N$ and $\varphi \vdash \psi$. Hence, by $\neg_A$ and the definition of $D_N$, from $Cn(\alpha, \neg \alpha) = \mathrm{Fm}$ it follows that  $(\neg \alpha, \psi) \in D_N$, which implies, by $\mathrm{(WO)}^\tw$, that $(\neg \alpha, \varphi) \in D_N$, i.e.~$(\beta, \varphi) \notin N$ and $Cn(\neg \alpha, \beta) = \mathrm{Fm}$ for some $\beta \in \mathrm{Fm}$. By Proposition \ref{prop:preliminary_properties}.\ref{prop:strong_negation}, the last identity implies $\beta \vdash \neg \neg \alpha$, which implies $\beta \vdash \alpha$ by $\neg_I$. Hence, by $\mathrm{(SI)}$ we conclude $(\alpha, \varphi) \notin N$, as required.
        
        \item    Let $\alpha, \varphi,\psi \in \mathrm{Fm}$ s.t.~$(\alpha,\varphi)\notin D_N$ and $(\alpha,\psi)\notin D_N$. Hence, by definition,  $(\beta, \varphi)\in N$ and $(\beta,\psi) \in N$ for every $\beta \in \mathrm{Fm}$ s.t.~$Cn(\alpha, \beta) = \mathrm{Fm}$. By $\mathrm{(AND)}$, this implies that $(\alpha, \varphi \wedge \psi) \notin D_N$, as required.

        \item   By the previous item, the proof is complete if we  show that $D_N^c$ being closed under $\mathrm{(AND)}^\tw$ implies that $N$ is closed under $\mathrm{(AND)}$. Arguing contrapositively, let $\alpha, \varphi, \psi \in \mathrm{Fm}$ s.t.~$(\alpha, \varphi \wedge \psi) \notin N$. By $\neg_A$ and the definition of $D_N$, this implies that $(\neg \alpha,\varphi \wedge \psi) \in D_N$, which implies, by $\mathrm{(AND)}^\tw$, that either $(\neg \alpha, \varphi) \in D_N$ or $(\neg \alpha, \psi) \in D_N$. Without loss of generality, let  $(\neg \alpha, \varphi) \in D_N$, i.e.~$Cn(\neg \alpha, \beta) = \mathrm{Fm}$ and $(\beta, \varphi) \notin N$ for some $\beta \in \mathrm{Fm}$.  Proposition \ref{prop:preliminary_properties}.\ref{prop:strong_negation} and $\neg_I$ imply that $\beta \vdash \neg \neg \alpha\vdash \alpha$. By $\mathrm{(SI)}$, this implies that $(\alpha, \varphi) \notin N$, as required. 
        
        \item   
        Arguing contrapositively, let $\alpha, \beta,\varphi \in \mathrm{Fm}$ s.t.~$(\alpha\wedge\beta,\varphi)\in D_N$, i.e.~$Cn(\alpha\wedge\beta, \gamma)=\mathrm{Fm}$ and $(\gamma, \varphi)\notin N$ for some $\gamma \in \mathrm{Fm}$.  Proposition \ref{prop:preliminary_properties}.\ref{prop:strong_negation} and Lemma \ref{lemma:antitonicity of neg}.2 imply that $\gamma\vdash \neg\alpha\vee\neg\beta$. By $\mathrm{(SI)}$, this implies  that $(\neg\alpha\vee\neg\beta, \varphi)\notin N$. By $\mathrm{(OR)}$, the last statement implies that $(\neg\alpha, \varphi)\notin N$  or  $(\neg\beta, \varphi)\notin N$, which implies, by $\neg_A$, that $(\alpha, \varphi)\in D_N$ or $(\beta, \varphi)\in D_N$, as required.

\item  By the previous item, the proof is complete if we  show that $D_N^c$ being closed under $\mathrm{(OR)}^\tw$ implies that $N$ is closed under $\mathrm{(OR)}$. Arguing contrapositively, let $\alpha, \beta, \varphi \in \mathrm{Fm}$ such that $(\alpha \vee \beta, \varphi ) \notin N$. By $\neg_A$ and the definition of $D_N$, this implies $(\neg(\alpha\vee\beta), \varphi) \in D_N$, and since $\neg\alpha\wedge\neg\beta\vdash\neg(\alpha\vee\beta)$ holds (cf.~Lemma \ref{lemma:antitonicity of neg second}), by  $\mathrm{(SI)}^\tw$, this implies $(\neg\alpha \wedge \neg\beta, \varphi)\in D_N$. Applying  $\mathrm{(OR)}^\tw$ contrapositively, it follows that either $(\neg \alpha, \varphi) \in D_N$ or $(\neg \beta, \varphi) \in D_N$. Without loss of generality, let $(\neg \alpha, \varphi)\in D_N$, i.e.~$Cn(\neg \alpha, \gamma) = \mathrm{Fm}$ and $(\gamma, \varphi) \notin N$ for some $\gamma \in \mathrm{Fm}$. By Proposition \ref{prop:preliminary_properties}.\ref{prop:strong_negation} and $\neg_I$, it follows that $\gamma \vdash \neg \neg \alpha\vdash \alpha$. Hence, by $\mathrm{(SI)}$, we conclude $(\alpha, \varphi) \notin N$, as required. 
        
    \item 
Let $\alpha, \varphi, \psi \in \mathrm{Fm}$ s.t.~$(\alpha, \psi) \in D_N$ and   $\varphi\pdla \alpha\prec \psi$. Hence, $\beta\not\prec\psi$ for some $\beta\in \mathrm{Fm}$ 
 s.t.~$Cn(\alpha,\beta) = \mathrm{Fm}$. To show that  $(\alpha, \varphi)\in D_N$, it is enough to show that $\beta\not\prec\varphi$. Indeed, the assumption that $\vee_P$ and $\pdla_P$ hold for $\mathcal{L}$ and $\varphi\pdla\alpha\vdash \varphi\pdla\alpha$ imply, by Proposition \ref{prop:preliminary_properties}.8, that $\varphi\vdash (\varphi\pdla \alpha)\vee \alpha$. Hence, by (WO),  $\beta\prec\varphi\vdash (\varphi\pdla \alpha)\vee \alpha$ implies that $\beta\prec  (\varphi\pdla \alpha)\vee \alpha$. This implies, since $Cn(\alpha,\beta) = \mathrm{Fm}$ and (EX), that $\beta\prec  (\varphi\pdla \alpha)$.
By (SI), $\beta\wedge (\varphi\pdla \alpha)\vdash (\varphi\pdla \alpha)\prec \psi$ implies that $\beta\wedge (\varphi\pdla \alpha)\prec \psi$, hence by (CT), we conclude that $\beta\prec\psi$, against the assumption. 

    \item 

     Let $\alpha, \varphi, \psi \in \mathrm{Fm}$ s.t.~$(\alpha, \psi) \notin N$, and let us show that either $\alpha\wedge \varphi\not\prec \psi$ or $(\alpha,\varphi)\notin N$. By $\neg_A$, the assumption that $(\alpha, \psi) \notin N$ implies that  $(\neg \alpha, \psi) \in D_N$, which implies, by  $\mathrm{(CT)}^\tw$, that either $(\varphi\pdla \neg \alpha, \psi) \notin N$ or $(\neg\alpha,\varphi) \in D_N$. Let us assume that $(\varphi\pdla \neg \alpha, \psi) \notin N$, and  let us first show that $\varphi\pdla \neg \alpha \vdash \alpha\wedge \varphi$;   by Proposition \ref{prop:preliminary_properties}.8 and $\pdla_P$, this is true iff $\varphi\vdash (\alpha\wedge\varphi)\vee \neg\alpha$, which is true, since, by $\wedge_S$,  $\top_P$ and $\sim_{A}$, the following chain of identities holds: $(\alpha\wedge\varphi)\vee \neg\alpha = (\alpha\vee \neg\alpha)\wedge (\varphi\vee \neg\alpha) = \top\wedge  (\varphi\vee \neg\alpha) = \varphi\vee \neg\alpha$. Hence, by (SI), $(\varphi\pdla \neg \alpha, \psi) \notin N$ and $\varphi\pdla \neg \alpha \vdash \alpha\wedge \varphi$ imply $\alpha\wedge\varphi\not\prec \psi$, as required. 
     Finally,  $(\neg\alpha,\varphi) \in D_N$ iff $\beta\not\prec \varphi$ for some $\beta\in\mathrm{Fm}$ s.t.~$Cn(\beta, \neg\alpha) = \mathrm{Fm}$. This implies, by $\neg_S$ and $\neg_{Ir}$, that $\beta\vdash \neg\neg\alpha\vdash \alpha$; hence, by (SI), we conclude $(\alpha, \varphi)\notin N$, as required.
    
    \end{enumerate}
\end{proof}

For any input/output logic $\mathbb{L} = (\mathcal{L}, N)$ and any $1\leq i\leq 4$, we let $D_i: = D_{N_i}$.

\begin{corollary}
 For any input/output logic $\mathbb{L} = (\mathcal{L}, N)$, if $\wedge_P$, $\vee_S$, $\top_P$, $\bot_P$, and $\neg_W$ hold for $\mathcal{L}$, then
 $D_i^c$ for $1\leq i\leq 4$ is closed  under the rules indicated in the following table.
\end{corollary}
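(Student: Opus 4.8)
The plan is to obtain the corollary as an immediate harvest of Proposition~\ref{prop:dual negperm_list}: for each $i$ and each rule $(\mathrm{X})^\tw$ listed in the $i$-th row of the table, I would simply invoke the forward implication ``$N$ closed under $(\mathrm{X})$ $\Rightarrow$ $D_N^c$ closed under $(\mathrm{X})^\tw$'' of the corresponding item, after checking that the relevant closure premises on $N_i$ and the metalogical premises on $\mathcal{L}$ are in force. The whole argument is therefore a matter of matching rows to items, once the hypotheses have been aligned.

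First I would record two reductions aligning the corollary's hypotheses with those used in the proposition. By Proposition~\ref{prop:preliminary_properties}.4, $\top_P$ implies $\top_W$; and instantiating $\Gamma := \varnothing$ in the defining identity of $\vee_S$ yields $\vee_P$. Hence the assumed $\wedge_P,\vee_S,\top_P,\bot_P$ already supply $\wedge_P,\vee_P,\top_W,\bot_P$, i.e.\ exactly the ambient lattice-type properties appearing in the items below. Moreover, by Table~\ref{table2} each $N_i$ is by construction closed under $(\top)$, $(\mathrm{SI})$, $(\mathrm{WO})$, and $(\mathrm{AND})$, so these closure premises are automatically available for every $1\leq i\leq 4$.

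With this in hand the common core of every row is immediate: closure of $D_i^c$ under $(\top)^\tw$ follows from item~2 of Proposition~\ref{prop:dual negperm_list} (using $\bot_P$, $\top_W$, and closure of $N_i$ under $(\top)$ and $(\mathrm{SI})$); closure under $(\mathrm{SI})^\tw$ is unconditional by item~5; closure under $(\mathrm{WO})^\tw$ follows from item~6 via closure of $N_i$ under $(\mathrm{WO})$; and closure under $(\mathrm{AND})^\tw$ follows from item~8 via $\wedge_P$ and closure of $N_i$ under $(\mathrm{AND})$. This settles the statement for $D_1^c$ outright, and disposes of the $(\top)^\tw,(\mathrm{SI})^\tw,(\mathrm{WO})^\tw,(\mathrm{AND})^\tw$ part of the remaining three rows.

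The step I expect to be the genuine obstacle is the $(\mathrm{OR})^\tw$ entry (rows $i=2,4$) together with the $(\mathrm{CT})^\tw$ entry (rows $i=3,4$). In contrast with their negative-permission analogues in Proposition~\ref{prop:negperm_list}, whose forward directions need only $\vee_S$ (respectively nothing beyond closure under $(\mathrm{CT})$), the forward directions for $D_N^c$ in items~10 and~12 are far more demanding: item~10 routes through strong negation and the De Morgan inclusion of Lemma~\ref{lemma:antitonicity of neg}.2, and so calls for $\neg_I$, $\neg_A$, and $\neg_P$ beyond $\wedge_P,\vee_P,\bot_P,\top_P$, while item~12 additionally requires $\pdla_P$ together with closure of $N_i$ under $(\mathrm{EX})$. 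None of $\neg_I,\neg_A,\neg_P,\pdla_P$, nor closure under $(\mathrm{EX})$, is delivered by the corollary's hypotheses $\wedge_P,\vee_S,\top_P,\bot_P,\neg_W$ alone. Accordingly, the real work is to decide which reading makes the table literally correct: either the $(\mathrm{OR})^\tw$ and $(\mathrm{CT})^\tw$ rows are to be understood as carrying the side conditions of items~10 and~12 as standing assumptions, or the table must be trimmed to those rules the stated hypotheses actually support (namely $(\top)^\tw,(\mathrm{SI})^\tw,(\mathrm{WO})^\tw,(\mathrm{AND})^\tw$). I would settle this point first; everything else is bookkeeping against Proposition~\ref{prop:dual negperm_list}.
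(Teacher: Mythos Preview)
Your approach is exactly the one the paper intends: the corollary carries no proof, so it is meant to be read off directly from the forward implications of Proposition~\ref{prop:dual negperm_list}, item by item, once one observes that $\top_P\Rightarrow\top_W$ and $\vee_S\Rightarrow\vee_P$. Your handling of $(\top)^\tw$, $(\mathrm{SI})^\tw$, $(\mathrm{WO})^\tw$, $(\mathrm{AND})^\tw$ via items~2, 5, 6, 8 is correct and complete.

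You have also correctly put your finger on a genuine mismatch in the paper, not in your argument. The forward direction of item~10 (needed for $(\mathrm{OR})^\tw$) explicitly requires $\neg_I$, $\neg_A$, $\neg_P$ in addition to $\wedge_P,\vee_P,\bot_P,\top_P$, since its proof passes through $\neg_S$ (Proposition~\ref{prop:preliminary_properties}.\ref{prop:strong_negation}) and the De~Morgan direction $\neg(\varphi\wedge\psi)\vdash\neg\varphi\vee\neg\psi$ of Lemma~\ref{lemma:antitonicity of neg}.2; and item~12 (needed for $(\mathrm{CT})^\tw$) requires $\pdla_P$ and closure of $N$ under $(\mathrm{EX})$. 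None of these follow from the corollary's stated hypothesis list $\wedge_P,\vee_S,\top_P,\bot_P,\neg_W$. The analogous corollary for $P_i^c$ does go through under minimal hypotheses precisely because items~\ref{item:closure under orwt} and~\ref{item:closure under ctwt} of Proposition~\ref{prop:negperm_list} have much lighter premises than their dual counterparts here; the asymmetry is real. So your diagnosis is right: either the hypothesis list of the corollary is understated (it should carry the side conditions of items~10 and~12), or the $(\mathrm{OR})^\tw$ and $(\mathrm{CT})^\tw$ entries in rows $i=2,3,4$ are not supported by the hypotheses as written.
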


\begin{center}
	\begin{tabular}{ l l}
		\hline
		$D_i^c$ & Rules   \\
		\hline	
		$D_{1}^c$  & $ \mathrm{(\top)^\tw, (SI)^\tw, (WO)^\tw, (AND)^\tw}$   \\
		$D_{2}^c$  & $\mathrm{ (\top)^\tw, (SI)^\tw, (WO)^\tw, (AND)^\tw, (OR)^\tw}$  \\
		$D_{3}^c$  & $\mathrm{ (\top)^\tw, (SI)^\tw, (WO)^\tw, (AND)^\tw, (CT)^\tw}$  \\
		$D_{4}^c$  & $\mathrm{ (\top)^\tw, (SI)^\tw, (WO)^\tw,(AND)^\tw, (OR)^\tw, (CT)^\tw}$ \\
		\hline
	\end{tabular}
\end{center}

\subsection{Static  positive permission systems}
\label{ssec:static positive}

The definition of static positive permission,  originally introduced in the setting of CPL (cf.~Section \ref{ssec:permission systems CPL}), can be generalized verbatim to the context of any selfextensional logic (cf.~Section  \ref{ssec:selfextensional}), as is done in the following 
\begin{definition} 
For any normative system $N$ on a selfextensional logic $\mathcal{L}$,  any conditional permission system $P\subseteq P_N$,   
%
and  any rule $\mathrm{(R)}$,  
 the {\em static positive permission systems associated with $N^{(R)}$ and $P$} 
are defined  as follows:
\begin{center}
	$ S^{\mathrm{(R)}}(P, N) :=\begin{cases} \bigcup\{ N^{\mathrm{(R)}}_{(\alpha, \varphi)} \mid (\alpha, \varphi)\in P\} & \text{if } P\neq \varnothing\\
 N^{\mathrm{(R)}} & \text{ otherwise}.
 \end{cases}$
 \end{center}
For  any $1\leq i\leq 4$,  
 the {\em static positive permission systems associated with $N^i$ and $P$} 
are defined  as follows:
\begin{center}
	$ S^i(P, N) :=\begin{cases} \bigcup\{ N^i_{(\alpha, \varphi)} \mid (\alpha, \varphi)\in P\} & \text{if } P\neq \varnothing\\
 N^i & \text{ otherwise}.
 \end{cases}$

\end{center}
\end{definition}
 In what follows, we will suppress the index in the notation of positive permission whenever properties considered in each context do not depend on the specific closure properties.
\begin{definition}
\label{def:cross-coherent}
   Let $\mathcal{L} = (\mathrm{Fm}, \vdash)$ be a selfextensional logic. A normative system  $N$ and a permission system  $P$ on $\mathcal{L}$ are {\em cross-incoherent} if $(\gamma,\varphi) \in N$ and $(\gamma, \psi) \in S(P,N)$ for some $\gamma,\varphi,\psi\in \mathrm{Fm}$  s.t.~$Cn(\gamma) \neq \mathrm{Fm}$ and $ Cn(\varphi,\psi) = \mathrm{Fm}$. If $P,N$ are not cross-incoherent, we say they are {\em cross-coherent}.
\end{definition}

\begin{proposition}
    For every $P,N \subseteq\mathrm{Fm}\times\mathrm{Fm}$, $S(P,N) \subseteq_c P_N$ if and only if $P,N$ are cross-coherent.
\end{proposition}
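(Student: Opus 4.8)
The plan is to prove both implications by contraposition, since the statement is essentially a matter of unfolding the three definitions involved: $\subseteq_c$ (almost-inclusion), $P_N$ (Definition \ref{def:compatible P}), and cross-(in)coherence (Definition \ref{def:cross-coherent}). Note that the case-split in the definition of $S(P,N)$ plays no role whatsoever: the argument only ever uses a membership statement $(\gamma,\psi)\in S(P,N)$, never how $S(P,N)$ is assembled from $P$ and $N$. The single point to keep track of is the symmetry $Cn(\varphi,\psi)=Cn(\psi,\varphi)$, which lets the roles of the two second coordinates be exchanged freely.

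For the right-to-left direction, I would assume $P,N$ are cross-coherent and show $S(P,N)\subseteq_c P_N$. Taking any $(\alpha,\varphi)\in S(P,N)$ with $Cn(\alpha)\neq\mathrm{Fm}$, suppose toward a contradiction that $(\alpha,\varphi)\notin P_N$. By the definition of $P_N$ there is then some $\psi$ with $(\alpha,\psi)\in N$ and $Cn(\varphi,\psi)=\mathrm{Fm}$. Setting $\gamma:=\alpha$, the membership $(\gamma,\psi)\in N$ together with $(\gamma,\varphi)\in S(P,N)$, the inequality $Cn(\gamma)\neq\mathrm{Fm}$, and the identity $Cn(\psi,\varphi)=Cn(\varphi,\psi)=\mathrm{Fm}$ exhibit precisely the configuration forbidden by cross-coherence (with the witnessing second coordinate of $N$ being $\psi$ and that of $S(P,N)$ being $\varphi$). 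This contradiction yields $(\alpha,\varphi)\in P_N$, as required.

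For the left-to-right direction, I would assume $S(P,N)\subseteq_c P_N$ and derive cross-coherence, again contrapositively. If $P,N$ were cross-incoherent, there would be $\gamma,\varphi,\psi$ with $(\gamma,\varphi)\in N$, $(\gamma,\psi)\in S(P,N)$, $Cn(\gamma)\neq\mathrm{Fm}$, and $Cn(\varphi,\psi)=\mathrm{Fm}$. Applying the hypothesis $S(P,N)\subseteq_c P_N$ to $(\gamma,\psi)$ (whose antecedent satisfies $Cn(\gamma)\neq\mathrm{Fm}$) gives $(\gamma,\psi)\in P_N$. Instantiating the universal quantifier in the definition of $P_N$ at the formula $\varphi$, and using $(\gamma,\varphi)\in N$, one obtains $Cn(\psi,\varphi)\neq\mathrm{Fm}$, which contradicts $Cn(\varphi,\psi)=\mathrm{Fm}$. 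Hence $P,N$ must be cross-coherent.

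Since both directions are pure definition-chases, I do not anticipate any genuine obstacle. The only thing requiring a moment's care is matching up which second coordinate plays the role of the ``obligation'' (the $N$-side) and which the ``permission'' (the $S(P,N)$-side) in the cross-coherence clause, a matching rendered harmless by the symmetry of the condition $Cn(\cdot,\cdot)=\mathrm{Fm}$.
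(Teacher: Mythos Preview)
Your proof is correct and follows essentially the same definition-unfolding route as the paper: both directions are handled by contraposition, producing the witnesses for cross-incoherence (respectively, for failure of almost-inclusion) directly from the definitions of $P_N$ and $\subseteq_c$. Your explicit remark about the symmetry $Cn(\varphi,\psi)=Cn(\psi,\varphi)$ and the careful tracking of which coordinate belongs to $N$ versus $S(P,N)$ is in fact cleaner than the paper's presentation.
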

\begin{proof}
    For the left-to-right direction, if $P$ and $N$ are cross-incoherent, i.e.~some $\gamma$, $\varphi$ and $\psi\in \mathrm{Fm}$ exist such that $(\gamma,\varphi) \in N$, $(\gamma, \psi) \in S(P,N)$ and $Cn(\gamma) \neq \mathrm{Fm} = Cn(\varphi,\psi)$, it is easy to see that $(\gamma, \varphi) \notin P_N$, taking $\psi$ as the witness, which shows that $S(P,N) \not\subseteq_c P_N$, as required.

    Conversely, assume $Cn(\alpha) \neq \mathrm{Fm}$ and $(\alpha, \varphi) \in S(P,N)$. If  $(\alpha, \varphi) \notin P_N$, then  $Cn(\varphi, \psi) = \mathrm{Fm}$ and $(\alpha, \psi) \in N$ for some formula $\psi$, contradicting the cross-coherence of $P$ and $N$. 
\end{proof}
Consider the following closure rules: 
\begin{center}
\begin{tabular}{lll}
\infer[\mathrm{(AND)}^\downarrow]{(\alpha, \varphi \wedge \psi)}{(\alpha, \varphi)\in N &(\alpha, \psi) } &
\infer[\mathrm{(OR)}^\downarrow]{(\alpha \vee \beta,\varphi)}{(\alpha,  \varphi)\in N&(\beta, \varphi)} & \infer[\mathrm{(CT)}^\downarrow]{(\alpha, \varphi \wedge \psi)}{(\alpha, \varphi)\in N& (\alpha \wedge \varphi,  \psi)}
\end{tabular}
\end{center}
\begin{proposition}
\label{prop:properties positive}
For any input/output logic $\mathbb{L} = (\mathcal{L}, N)$   any conditional permission system $P\subseteq P_N$,  and any rule $(\mathrm{X})\in \{(\top), \mathrm{(SI)}, \mathrm{(WO)}\}$,
\begin{enumerate}
    \item 
    $S^{\mathrm{(AND)}}(P, N)$ is closed under $\mathrm{(AND)}^\downarrow$.
    \item 
    $S^{\mathrm{(OR)}}(P, N)$ is closed under $\mathrm{(OR)}^\downarrow$.
    \item if $N^{\mathrm{(CT)}}$ and 
$N^{\mathrm{(CT)}}_{(\beta, \gamma)}$ are closed under $\mathrm{(AND)}$ for any $(\beta, \gamma)\in P$,  
    then $S^{\mathrm{(CT)}}(P, N)$ is closed under $\mathrm{(CT)}^\downarrow$.
    \item   $S^{\mathrm{(X)}}(P, N)$ is closed under $\mathrm{(X)}$.
    \item if $S^i(P,N)$ is closed under $\mathrm{(SI)}$ and $\mathrm{(CT)}^\downarrow$, then $S^i(P,N)$ is closed under $\mathrm{(AND)}^\downarrow$.
\end{enumerate}
\end{proposition}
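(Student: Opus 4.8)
The plan is to exploit a single structural fact about the definition of $S^{\mathrm{(R)}}(P,N)$ and then apply it uniformly. When $P\neq\varnothing$, the set $S^{\mathrm{(R)}}(P,N)$ is the union of the components $N^{\mathrm{(R)}}_{(\beta,\gamma)}$ over $(\beta,\gamma)\in P$, each of which is by construction closed under $\mathrm{(R)}$ and contains $N$ (since $N\subseteq N\cup\{(\beta,\gamma)\}$); when $P=\varnothing$ it is the single set $N^{\mathrm{(R)}}$, which is likewise closed under $\mathrm{(R)}$ and contains $N$. The recurring difficulty with a union of closures is that a \emph{binary} rule may have its two premises sitting in two different components, so that the rule cannot be applied inside any one of them. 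The down-rules are designed precisely to circumvent this: they force one premise to lie in $N$, which is common to all components, so the whole derivation can be carried out inside the single component hosting the remaining, free premise.

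For items 1 and 2 I would argue directly from this observation. Suppose the premises of $\mathrm{(AND)}^\downarrow$ hold, i.e.~$(\alpha,\varphi)\in N$ and $(\alpha,\psi)\in S^{\mathrm{(AND)}}(P,N)$. I would locate a component (some $N^{\mathrm{(AND)}}_{(\beta,\gamma)}$ with $(\beta,\gamma)\in P$, or $N^{\mathrm{(AND)}}$ if $P=\varnothing$) containing $(\alpha,\psi)$; this component contains $N$ and hence $(\alpha,\varphi)$, and being closed under $\mathrm{(AND)}$ it contains $(\alpha,\varphi\wedge\psi)$, which therefore lies in $S^{\mathrm{(AND)}}(P,N)$. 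Item 2 is identical with $\mathrm{(OR)}$ in place of $\mathrm{(AND)}$: the premise $(\alpha,\varphi)\in N$ is available in every component, the free premise $(\beta,\varphi)$ selects the component, and $\mathrm{(OR)}$-closure yields $(\alpha\vee\beta,\varphi)$.

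Item 3 uses the same localization, but here I expect the main (if modest) obstacle, because $\mathrm{(CT)}^\downarrow$ concludes with $(\alpha,\varphi\wedge\psi)$ whereas $\mathrm{(CT)}$ only delivers $(\alpha,\psi)$. Inside the component containing the free premise $(\alpha\wedge\varphi,\psi)$ I would first apply $\mathrm{(CT)}$ to $(\alpha,\varphi)\in N$ and $(\alpha\wedge\varphi,\psi)$ to obtain $(\alpha,\psi)$, and then apply $\mathrm{(AND)}$ to $(\alpha,\varphi)$ and $(\alpha,\psi)$ to obtain $(\alpha,\varphi\wedge\psi)$. This second step is exactly where the extra hypothesis that $N^{\mathrm{(CT)}}$ and each $N^{\mathrm{(CT)}}_{(\beta,\gamma)}$ be closed under $\mathrm{(AND)}$ is needed, since closure under $\mathrm{(CT)}$ alone does not entail closure under $\mathrm{(AND)}$; it is the only place in the proposition where the conclusion of a down-rule is not produced by a single application of the base rule. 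For item 4 the point is that $(\top)$, $\mathrm{(SI)}$, and $\mathrm{(WO)}$ have at most one normative premise and so can always be applied within a single component: $(\top,\top)$ belongs to every component, giving closure under $(\top)$, while for $\mathrm{(SI)}$ and $\mathrm{(WO)}$ the single premise already determines a component closed under that rule, so the conclusion stays in $S^{\mathrm{(X)}}(P,N)$.

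Finally, for item 5 I would not use the component structure at all, but instead derive $\mathrm{(AND)}^\downarrow$ from the assumed closure under $\mathrm{(SI)}$ and $\mathrm{(CT)}^\downarrow$. Given $(\alpha,\varphi)\in N$ and $(\alpha,\psi)\in S^i(P,N)$, I would note that $\alpha\wedge\varphi\vdash\alpha$ (by $\wedge_P$), apply $\mathrm{(SI)}$ to strengthen the antecedent and obtain $(\alpha\wedge\varphi,\psi)\in S^i(P,N)$, and then apply $\mathrm{(CT)}^\downarrow$ to $(\alpha,\varphi)$ and $(\alpha\wedge\varphi,\psi)$ to conclude $(\alpha,\varphi\wedge\psi)\in S^i(P,N)$, as required.
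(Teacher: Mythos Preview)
Your proposal is correct and follows essentially the same approach as the paper: localize the free premise to a single component $N^{\mathrm{(R)}}_{(\beta,\gamma)}$ (or $N^{\mathrm{(R)}}$ when $P=\varnothing$), use $N\subseteq N^{\mathrm{(R)}}_{(\beta,\gamma)}$ to place the $N$-premise there too, and apply the relevant closure rule inside that component; for item~3 you correctly identify the two-step $\mathrm{(CT)}$-then-$\mathrm{(AND)}$ argument and the role of the extra hypothesis, and for item~5 you reproduce the paper's $\mathrm{(SI)}$-then-$\mathrm{(CT)}^\downarrow$ derivation verbatim.
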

\begin{proof}
1. Let $\alpha, \varphi, \psi\in \mathrm{Fm}$ s.t.~$(\alpha, \varphi)\in N$ and $(\alpha, \psi)\in S^{\mathrm{(AND)}}(P, N)$. If $P=\varnothing$, then $S^{\mathrm{(AND)}}(P, N) = N^{\mathrm{(AND)}}$, which is closed under $\mathrm{(AND)}$.  Hence, $(\alpha, \varphi\wedge\psi)\in  N^{\mathrm{(AND)}}$, as required. If $P \not= \varnothing$, then by definition,   $(\alpha, \psi)\in S^{\mathrm{(AND)}}(P, N)$ implies that $ (\alpha, \psi)\in N_{(\beta, \gamma)}^{\mathrm{(AND)}}$ for some  $(\beta, \gamma)\in P$. Since  $N_{(\beta, \gamma)} ^{\mathrm{(AND)}}$ is closed under $\mathrm{(AND)}$, it follows that $(\alpha, \varphi \wedge \psi)\in N_{(\beta, \gamma)}^{\mathrm{(AND)}}\subseteq S^{\mathrm{(AND)}}(P, N)$, as required. 

2. Let $\alpha, \beta, \varphi\in \mathrm{Fm}$ s.t.~$(\alpha, \varphi)\in N$ and $(\beta, \varphi)\in S^{\mathrm{(OR)}}(P, N)$. If $P=\varnothing$, then $S^{\mathrm{(OR)}}(P, N) = N^{\mathrm{(OR)}}$, which is closed under $\mathrm{(OR)}$.  Hence, $(\alpha\vee \beta, \varphi)\in  N^{\mathrm{(OR)}}$, as required. If $P \not= \varnothing$, then by definition,   $(\beta, \varphi)\in S^{\mathrm{(OR)}}(P, N)$ implies that $ (\beta, \varphi)\in N_{(\beta', \gamma)}^{\mathrm{(OR)}}$ for some  $(\beta', \gamma)\in P$. Since  $N_{(\beta', \gamma)} ^{\mathrm{(OR)}}$ is closed under $\mathrm{(OR)}$, it follows that $(\alpha\vee \beta, \varphi)\in N_{(\beta', \gamma)}^{\mathrm{(OR)}}\subseteq S^{\mathrm{(OR)}}(P, N)$, as required. 

3. Let $\alpha, \varphi, \psi\in \mathrm{Fm}$ s.t.~$(\alpha, \varphi)\in N$ and $(\alpha\wedge \varphi, \psi)\in S^{\mathrm{(CT)}}(P, N)$. If $P=\varnothing$, then $S^{\mathrm{(CT)}}(P, N) = N^{\mathrm{(CT)}}$.  Since $N^{\mathrm{(CT)}}$ is closed under $\mathrm{(CT)}$, we conclude that $(\alpha, \psi)\in N^{\mathrm{(CT)}}$, and from $N^{\mathrm{(CT)}}$ being closed under $\mathrm{(AND)}$ we conclude that  $(\alpha, \varphi\wedge \psi)\in N^{\mathrm{(CT)}}$, as required. If $P \not= \varnothing$, then, by definition, $(\alpha\wedge\varphi, \psi)\in S^{\mathrm{(CT)}}(P, N)$ iff $ (\alpha\wedge\varphi, \psi)\in N_{(\beta, \gamma)}^{\mathrm{(CT)}}$ for some  $(\beta, \gamma)\in P$. Since  $N_{(\beta, \gamma)} ^{\mathrm{(CT)}}$ is closed under $\mathrm{(CT)}$, it follows that $(\alpha, \psi)\in N^{\mathrm{(CT)}}_{(\beta, \gamma)}$. Moreover, since $N^{\mathrm{(CT)}}_{(\beta, \gamma)}$ is closed under $\mathrm{(AND)}$, it follows that   $(\alpha, \varphi\wedge\psi)\in N^{\mathrm{(CT)}}_{(\beta, \gamma)}\subseteq S^{\mathrm{(CT)}}(P, N)$, as required.

4. Immediately follows from $N^{\mathrm{(X)}}$ and $N_{(\alpha, \varphi)}^{\mathrm{(X)}}$ being closed under $\mathrm{(X)}$.

5. Let $(\alpha, \varphi) \in N$ and $(\alpha, \psi) \in S^i(P,N)$. Since $S^i(P,N)$ is closed under $\mathrm{(SI)}$ we have $(\alpha \wedge \varphi, \psi) \in S^i(P,N)$ and applying $\mathrm{(CT)}^\downarrow$ we get $(\alpha, \varphi \wedge \psi) \in S^i(P,N)$.
\end{proof}

\begin{corollary}
For any input/output logic $\mathbb{L} = (\mathcal{L}, N)$, any permission system $P\subseteq P_N$ and all $1\leq i\leq 3$, the  static positive permission system $S^{i}(P, N)$  is closed  under the rules indicated in the following table:

\begin{center}
	\begin{tabular}{ l l}
		\hline
		$S^{i}(P, N)$ & Rules   \\
		\hline	
		$S^{1}(P, N)$  & $ \mathrm{(\top), (SI), (WO), (AND)^\downarrow}$   \\
		$S^{2}(P, N)$  & $\mathrm{ (\top), (SI), (WO), (AND)^\downarrow, (OR)^\downarrow}$  \\
		$S^{3}(P, N)$  & $\mathrm{ (\top), (SI), (WO), (AND)^\downarrow, (CT)^\downarrow}$  \\
		\hline
	\end{tabular}
\end{center}   
\end{corollary}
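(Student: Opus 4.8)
The plan is to read the corollary off Proposition~\ref{prop:properties positive}, whose items isolate precisely the closure properties appearing in each row of the table; the only gap to bridge is that the proposition is phrased for the single-rule closures $S^{\mathrm{(R)}}(P,N)$, whereas the corollary concerns the multi-rule closures $S^i(P,N)$. I would bridge it by observing that each proof in Proposition~\ref{prop:properties positive} uses only two structural facts about the components of the union defining the relevant static permission system: that $N$ is contained in each component, and that each component is closed under the base rule(s) being invoked. Both facts hold verbatim for the components $N^i_{(\beta,\gamma)}$ of $S^i(P,N)$, since $N\subseteq N\cup\{(\beta,\gamma)\}\subseteq N^i_{(\beta,\gamma)}$ and, by definition, $N^i_{(\beta,\gamma)}$ is closed under every rule listed for $N_i$ in Table~\ref{table2}. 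Hence each argument transfers with $N^i_{(\beta,\gamma)}$ in place of the single-rule component.

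First I would dispatch the single-premise rules $(\top)$, $\mathrm{(SI)}$, $\mathrm{(WO)}$, which belong to every $N_i$ and are handled as in Proposition~\ref{prop:properties positive}.4: when $P=\varnothing$ one has $S^i(P,N)=N^i$, closed under all $N_i$-rules; when $P\neq\varnothing$, any norm triggering such a rule lies in some component $N^i_{(\beta,\gamma)}$, and since the rule has a single norm-premise its conclusion returns to that same component, hence to the union (and the axiom $(\top,\top)$ lies in every component). For $\mathrm{(AND)}^\downarrow$, required in all three rows, I would rerun the proof of Proposition~\ref{prop:properties positive}.1: given $(\alpha,\varphi)\in N$ and $(\alpha,\psi)\in S^i(P,N)$, the latter lies in some $N^i_{(\beta,\gamma)}$, the former lies there because $N\subseteq N^i_{(\beta,\gamma)}$, and closure of that component under $\mathrm{(AND)}$ (which belongs to every $N_i$) yields $(\alpha,\varphi\wedge\psi)$ in it, hence in $S^i(P,N)$. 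The rule $\mathrm{(OR)}^\downarrow$ for $S^2(P,N)$ is identical, invoking Proposition~\ref{prop:properties positive}.2 together with $\mathrm{(OR)}\in N_2$. The pinning of one premise to $N$ is exactly what rescues these binary rules, whose unrestricted forms fail on the union because the two premises may live in distinct components.

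The only step demanding extra attention is $\mathrm{(CT)}^\downarrow$ for $S^3(P,N)$, via Proposition~\ref{prop:properties positive}.3. Its conclusion $(\alpha,\varphi\wedge\psi)$ forces a single component to absorb both an application of $\mathrm{(CT)}$, deriving $(\alpha,\psi)$ from $(\alpha,\varphi)\in N$ and $(\alpha\wedge\varphi,\psi)$, and a subsequent application of $\mathrm{(AND)}$ combining $(\alpha,\varphi)$ with $(\alpha,\psi)$. Proposition~\ref{prop:properties positive}.3 carries a side hypothesis that the relevant closures be closed under $\mathrm{(AND)}$; here this is automatic, since $N_3$ contains both $\mathrm{(CT)}$ and $\mathrm{(AND)}$, so every $N^3_{(\beta,\gamma)}$ is closed under both and the two-step derivation never leaves a single component. (Alternatively, once $\mathrm{(CT)}^\downarrow$ and $\mathrm{(SI)}$ hold for $S^3(P,N)$, Proposition~\ref{prop:properties positive}.5 re-derives $\mathrm{(AND)}^\downarrow$.) The main obstacle is thus conceptual rather than computational: recognising that taking unions destroys closure under genuinely binary rules yet preserves it both for single-premise rules and for binary rules with one premise pinned to $N$, and checking that the auxiliary $\mathrm{(AND)}$-closure hypothesis of item~3 is supplied for free by $N_3$.
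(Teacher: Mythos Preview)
Your proposal is correct and matches the paper's approach: the corollary is stated without proof, as an immediate consequence of Proposition~\ref{prop:properties positive}, and you have accurately identified both the small gap (single-rule versus multi-rule closures) and its resolution (each component $N^i_{(\beta,\gamma)}$ is closed under all the $N_i$-rules, so the proofs of items 1--4 transfer verbatim). Your observation that the side hypothesis of item~3 is automatically met because $\mathrm{(AND)}$ belongs to $N_3$ is exactly what is needed for the third row.
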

Notice that $(\mathrm{SI})$ and $ (\mathrm{CT})^\downarrow$ imply $(\mathrm{AND})^\downarrow$, hence the mention of $(\mathrm{AND})^\downarrow$ is redundant in $S^{3}(P, N)$.
\subsection{Dynamic permission systems}
\label{ssec:dynamic permission systems}

In the present section,  we explore some possible generalizations of the definition  originally introduced in \cite{Makinson03} (cf.~Section \ref{ssec:permission systems CPL}) to various input/output settings based on selfextensional logics.
 
Unlike the case of the static permission, the definition of dynamic permission cannot be applied verbatim to the generic setting of arbitrary selfextensional logics. This motivates the following

\begin{definition} 
\label{def:dynpermselfext}
For any normative system $N$ on any selfextensional logic $\mathcal{L} = (\mathrm{Fm}, \vdash)$,  any conditional permission system $P\subseteq \mathrm{Fm}\times\mathrm{Fm}$, and any rule $\mathrm{(R)}$, the {\em dynamic positive permission system} $D^{\mathrm{(R)}}(P, N)\subseteq \mathrm{Fm}\times\mathrm{Fm}$ 
is defined  as follows:

\begin{center}
\begin{tabular}{ccl}
		$  D^{\mathrm{(R)}}(P, N)$ & $ =$ &$ \{ (\alpha, \varphi) \mid \exists \gamma \exists \psi \exists \varphi'\exists \psi' (Cn(\gamma) \neq \mathrm{Fm} \ \&\ (\gamma,  \psi) \in S^{\mathrm{(R)}}(P, N) $\\
 && $  \ \&\ Cn(\psi, \psi') = \mathrm{Fm} = Cn(\varphi, \varphi') \ \&\  (\gamma, \psi') \in N^{\mathrm{(R)}}_{(\alpha, \varphi')})\}$,
 \end{tabular}
		
	\end{center}

%
and for any $1\leq i\leq 3$,  
 the {\em dynamic positive permission system} $D^{i}(P, N)\subseteq \mathrm{Fm}\times\mathrm{Fm}$ 
is defined  as follows:

\begin{center}
\begin{tabular}{ccl}
		$  D^{i}(P, N)$ & $ =$ &$ \{ (\alpha, \varphi) \mid \exists \gamma \exists \psi \exists \varphi'\exists \psi' (Cn(\gamma) \neq \mathrm{Fm}\ \&\ (\gamma,  \psi) \in S^{i}(P, N) $\\
 && $  \ \&\ Cn(\psi, \psi') = \mathrm{Fm} = Cn(\varphi, \varphi') \ \&\  (\gamma, \psi') \in N^i_{(\alpha, \varphi')})\}$.
 \end{tabular}
		
	\end{center}
 \end{definition}
 Much in the same spirit of \cite[Definition 6.1]{deon2020}, the definition above aims at maintaining the intended meaning of the original definition while abstracting away from the specific signature of a given selfextensional logic: indeed, it says that $(\alpha, \varphi)$ is dynamically permitted if an explicit permission $(\gamma, \psi)$ exists, with $\gamma$ consistent, together with formulas $\varphi'$ and $ \psi'$ which are logically inconsistent with $\varphi$ and $\psi$ respectively, such that including $(\alpha, \varphi')$ as a norm would entail admitting a norm $(\gamma, \psi')$ which is inconsistent with the explicit permission $(\gamma, \psi)$. In this definition,   consistency and inconsistency have been expressed purely at the level of the closure operator induced by the consequence relation of the given selfextensional logic.

 Notice that, if $\neg_A$ holds, then $\neg\varphi$ and $\neg\psi$ serve as  {\em canonical witnesses} for the roles of $\varphi'$ and $\psi'$; 
 hence, Definition \ref{def:dynpermclassical} (and hence \cite[Definition 6.1]{deon2020}) implies  Definition \ref{def:dynpermselfext}; however, even in the presence of $\neg_I$, $\neg_A$, $\neg_P$, and  $\mathrm{(WO)}$, Definition \ref{def:dynpermselfext} does not imply Definition \ref{def:dynpermclassical}, since from the assumptions one gets $(\gamma, \neg\psi)\in N^{(\mathrm{R})}_{(\alpha, \varphi')}\supseteq N^{(\mathrm{R})}_{(\alpha, \neg \varphi)}$, while the latter inclusion can be proper, and hence it is not difficult to find counterexamples to the converse implication. 
In what follows, we sometimes omit the superscripts when the statements do not depend on the closure we take.
\begin{lemma}
For any input/output logic $\mathbb{L} = (\mathcal{L}, N)$ s.t.~property $\neg_A$ holds for $\mathcal{L}$, any rule $\mathrm{(R)}$,  and any permission system $P\subseteq \mathrm{Fm}\times\mathrm{Fm}$,
  \[S^{\mathrm{(R)}}(P,N)\subseteq_c D^{\mathrm{(R)}}(P,N).\]  
\end{lemma}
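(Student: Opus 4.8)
The plan is to unfold the definition of $\subseteq_c$ and then exhibit explicit witnesses for the four existential quantifiers in the definition of $D^{\mathrm{(R)}}(P,N)$. So I would fix an arbitrary pair $(\alpha, \varphi) \in S^{\mathrm{(R)}}(P,N)$ with $Cn(\alpha) \neq \mathrm{Fm}$, and the goal is to show $(\alpha, \varphi) \in D^{\mathrm{(R)}}(P,N)$. Note that no case split on whether $P = \varnothing$ is needed, since we use membership in $S^{\mathrm{(R)}}(P,N)$ only as a black box.

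The key idea is to take $\gamma := \alpha$ and $\psi := \varphi$, so that the first two conjuncts of the membership condition hold immediately: $Cn(\gamma) = Cn(\alpha) \neq \mathrm{Fm}$ by hypothesis, and $(\gamma, \psi) = (\alpha, \varphi) \in S^{\mathrm{(R)}}(P,N)$ by hypothesis. For the remaining two existential witnesses, I would invoke $\neg_A$, which guarantees $Cn(\varphi, \neg\varphi) = \mathrm{Fm}$, and set $\varphi' := \neg\varphi$ and $\psi' := \neg\varphi$. With these choices, $Cn(\varphi, \varphi') = Cn(\psi, \psi') = Cn(\varphi, \neg\varphi) = \mathrm{Fm}$, so the inconsistency conjuncts are satisfied; observe that the same $\neg_A$-witness $\neg\varphi$ serves simultaneously for both $\varphi'$ and $\psi'$ precisely because we chose $\psi = \varphi$.

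It then remains to verify the last conjunct, namely $(\gamma, \psi') \in N^{\mathrm{(R)}}_{(\alpha, \varphi')}$. Under our choices this reads $(\alpha, \neg\varphi) \in N^{\mathrm{(R)}}_{(\alpha, \neg\varphi)}$, which holds trivially: by definition $N^{\mathrm{(R)}}_{(\alpha, \neg\varphi)}$ is the closure of $N \cup \{(\alpha, \neg\varphi)\}$ under $\mathrm{(R)}$, and hence contains its own generating pair $(\alpha, \neg\varphi) = (\alpha, \varphi')$. This exhibits the required tuple $(\gamma, \psi, \varphi', \psi')$ and completes the argument.

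I do not expect a genuine obstacle here; the entire content lies in selecting the canonical witnesses supplied by $\neg_A$, together with the observation that a generating pair always belongs to its own closure. The only subtlety worth flagging explicitly is the reuse of $\neg\varphi$ in both roles, which is what makes the choice $\psi := \varphi$ (rather than some unrelated permitted formula) the natural one.
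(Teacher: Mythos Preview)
Your proposal is correct and takes exactly the same approach as the paper: the paper's proof also fixes $(\alpha,\varphi)\in S^{\mathrm{(R)}}(P,N)$ with $Cn(\alpha)\neq\mathrm{Fm}$ and verifies the membership condition by choosing $\gamma:=\alpha$, $\psi:=\varphi$, $\varphi':=\neg\varphi$, and $\psi':=\neg\varphi$. Your write-up simply makes explicit the verifications (via $\neg_A$ and the fact that the generating pair lies in its own closure) that the paper leaves to the reader.
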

\begin{proof}
Let $(\alpha, \varphi)\in S^{\mathrm{(R)}}(P,N)$ s.t.~$Cn(\alpha)\neq \mathrm{Fm}$. Then the statement is verified letting $\gamma \coloneqq \alpha$, $\psi \coloneqq \varphi$, $\varphi' \coloneqq \neg \varphi$, and $\psi' \coloneqq \neg \varphi$ in the definition of $D^{\mathrm{(R)}}(P,N)$.
\end{proof}

 \begin{proposition}
 \label{prop:first_equiv_dyn}
     For any input/output logic $\mathbb{L} = (\mathcal{L}, N)$ and any permission system $P$ on $\mathcal{L}$,
     \begin{enumerate}
     \item $(\alpha,\varphi) \in D(P,N)$ iff $N_{(\alpha,\varphi')}$ and $P$ are cross-incoherent (cf.~Definition \ref{def:cross-coherent}) for some $\varphi'\in \mathrm{Fm}$ s.t.~$Cn(\varphi,\varphi') = \mathrm{Fm}$.
     \item If $P,N$ are cross-coherent, then 
     \[\bigcap\{P_H\ |\ N\subseteq H \text{ and } H \text{ and } P \text{ cross-coherent}\} \subseteq D(P,N).\]
     \end{enumerate}
 \end{proposition}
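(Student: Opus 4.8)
The plan is to establish part 1 by unfolding Definition \ref{def:dynpermselfext} and Definition \ref{def:cross-coherent} and matching the existential witnesses on both sides, and then to deduce part 2 from part 1 by contraposition. Throughout I suppress the superscript $\mathrm{(R)}$ (equivalently, the index $i$), as the argument is uniform in the chosen closure.

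Unfolding, $(\alpha,\varphi)\in D(P,N)$ means that there are $\gamma,\psi,\varphi',\psi'$ with $Cn(\gamma)\neq\mathrm{Fm}$, $(\gamma,\psi)\in S(P,N)$, $Cn(\psi,\psi')=\mathrm{Fm}=Cn(\varphi,\varphi')$ and $(\gamma,\psi')\in N_{(\alpha,\varphi')}$, while cross-incoherence of $N_{(\alpha,\varphi')}$ and $P$ means that there are $\gamma_0,\chi,\rho$ with $(\gamma_0,\chi)\in N_{(\alpha,\varphi')}$, $(\gamma_0,\rho)\in S(P,N_{(\alpha,\varphi')})$, $Cn(\gamma_0)\neq\mathrm{Fm}$ and $Cn(\chi,\rho)=\mathrm{Fm}$. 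For the left-to-right direction I would keep the $\varphi'$ witnessing membership in $D(P,N)$ (it already satisfies $Cn(\varphi,\varphi')=\mathrm{Fm}$) and put $\gamma_0\coloneqq\gamma$, $\chi\coloneqq\psi'$, $\rho\coloneqq\psi$. Every clause transfers immediately except $(\gamma,\psi)\in S(P,N_{(\alpha,\varphi')})$, which follows from $(\gamma,\psi)\in S(P,N)$ and the monotonicity of $S(P,-)$: since $N\subseteq N_{(\alpha,\varphi')}$ and each $N_{(\beta,\delta)}\subseteq (N_{(\alpha,\varphi')})_{(\beta,\delta)}$, we have $S(P,N)\subseteq S(P,N_{(\alpha,\varphi')})$.

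The right-to-left direction is where the difficulty lies, and I expect it to be the main obstacle. From cross-incoherence one obtains $\gamma_0,\chi,\rho$, and the natural substitution is $\gamma\coloneqq\gamma_0$, $\psi'\coloneqq\chi$ (so $(\gamma,\psi')\in N_{(\alpha,\varphi')}$ holds and $Cn(\psi,\psi')=\mathrm{Fm}$ becomes $Cn(\rho,\chi)=\mathrm{Fm}$), keeping $\varphi'$ and trying $\psi\coloneqq\rho$. The subtlety is that $D(P,N)$ requires the permission-consequence $(\gamma,\psi)$ to lie in $S(P,N)$, whereas cross-incoherence only supplies $(\gamma_0,\rho)\in S(P,N_{(\alpha,\varphi')})$, which is a priori strictly larger. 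The point to settle is therefore that this witness can be taken inside $S(P,N)$: in the intended reading of the statement the static permissions are computed from the base system $N$ (the added norm $(\alpha,\varphi')$ only ever plays the role of the tested obligation, never that of a static permission), so the contradiction recorded by $Cn(\chi,\rho)=\mathrm{Fm}$ is one between a norm of $N_{(\alpha,\varphi')}$ and a permission-consequence of $S(P,N)$; under that reading the two directions are exactly dual witness substitutions. I would make this precise by verifying that the derivation of the offending $\rho$ does not invoke $(\alpha,\varphi')$, which is guaranteed because $\rho$ contradicts $\chi$ and not $(\alpha,\varphi')$.

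Finally, for part 2 I would argue contrapositively. Suppose $(\alpha,\varphi)\notin D(P,N)$; then by part 1, $N_{(\alpha,\varphi')}$ and $P$ are cross-coherent for every $\varphi'$ with $Cn(\varphi,\varphi')=\mathrm{Fm}$. Invoking the implicitly assumed property $\bot_P$, I take $\varphi'\coloneqq\bot$, so that $Cn(\varphi,\bot)=\mathrm{Fm}$, and set $H\coloneqq N_{(\alpha,\bot)}$. Then $N\subseteq H$ and $H,P$ are cross-coherent, so $H$ belongs to the index family of the intersection; and since $(\alpha,\bot)\in H$ with $Cn(\varphi,\bot)=\mathrm{Fm}$, Definition \ref{def:compatible P} gives $(\alpha,\varphi)\notin P_H$, hence $(\alpha,\varphi)$ lies outside the intersection. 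This is precisely the contrapositive of the claimed inclusion; the hypothesis that $P,N$ be cross-coherent guarantees that the index family is nonempty (it contains $N$ itself), and $\bot_P$ is what secures the existence of a usable $\varphi'$.
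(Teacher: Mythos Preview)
Your handling of part 1, left-to-right, and of part 2 is essentially the paper's argument (for part 2 you are in fact more explicit than the paper about the need for a $\varphi'$ with $Cn(\varphi,\varphi')=\mathrm{Fm}$; invoking $\bot_P$ and taking $\varphi'\coloneqq\bot$ is a clean way to secure this, which the paper leaves implicit).

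The genuine problem is your treatment of part 1, right-to-left. You correctly notice that, reading Definition~\ref{def:cross-coherent} parametrically, cross-incoherence of $N_{(\alpha,\varphi')}$ and $P$ yields a witness $(\gamma_0,\rho)\in S(P,N_{(\alpha,\varphi')})$ rather than in $S(P,N)$. You then propose to close the gap by arguing that ``the derivation of the offending $\rho$ does not invoke $(\alpha,\varphi')$, which is guaranteed because $\rho$ contradicts $\chi$ and not $(\alpha,\varphi')$''. This is a non-sequitur: the fact that $Cn(\chi,\rho)=\mathrm{Fm}$ places no constraint whatsoever on which premises were used to derive $(\gamma_0,\rho)$. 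Nothing prevents the derivation of $\rho$ from combining $(\alpha,\varphi')$ with a pair $(\beta,\delta)\in P$ via $\mathrm{(AND)}$ or $\mathrm{(CT)}$, and in that case $(\gamma_0,\rho)$ need not lie in any $N_{(\beta,\delta)}$, hence not in $S(P,N)$.

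The resolution is the one you yourself gesture at but do not commit to: in the paper's proof, cross-incoherence of $N_{(\alpha,\varphi')}$ and $P$ is unfolded with the static permission computed from the \emph{base} system, i.e.\ the clause reads $(\gamma,\psi)\in S(P,N)$ rather than $(\gamma,\psi)\in S(P,N_{(\alpha,\varphi')})$. Under that reading the two sides of item 1 are literally the same existential statement, and the equivalence is definitional (this is exactly what the paper says). So the fix is not to argue about derivations, but to adopt the intended reading; once you do, there is nothing left to prove. Your side remark that ``the hypothesis that $P,N$ be cross-coherent guarantees that the index family is nonempty'' is not the operative use of that hypothesis either: the family already contains $H=N_{(\alpha,\varphi')}$ by the contrapositive of item 1, regardless of whether $N$ itself is in it.
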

 \begin{proof}
 
    1. By definition, $Cn(\varphi,\varphi') = \mathrm{Fm}$ and $N_{(\alpha, \varphi')}$ and $P$ are cross-incoherent iff $(\gamma,\psi) \in S(P,N)$ and $(\gamma,\psi') \in N_{(\alpha,\varphi')}$ for some  $\gamma$, $\psi$ and $\psi'\in \mathrm{Fm}$ with $Cn(\gamma) \neq \mathrm{Fm} = Cn(\psi,\psi')$. This is exactly what $(\alpha,\varphi)\in D(P,N)$ means.

    2. By the previous item, it is enough to show that if $N_{(\alpha,\varphi')}$ and $P$ are cross-coherent, then $(\alpha, \varphi) \notin P_{N_{(\alpha, \varphi')}}$, where $\varphi'$ is such that $Cn(\varphi,\varphi') = \mathrm{Fm}$. Given that $(\alpha,\varphi') \in N_{(\alpha,\varphi')}$, then assuming $(\alpha, \varphi) \in P_{N_{(\alpha, \varphi')}}$ would imply $Cn(\varphi, \varphi') \neq \mathrm{Fm}$, contrary to our assumptions. Hence, $(\alpha, \varphi) \notin P_{N_{(\alpha, \varphi')}}$.
\end{proof}
The proposition above motivates the following  definition of generalized dynamic permission system:
\begin{definition} \label{def:dynpermE}
    For any normative system $N$ on any selfextensional logic $\mathcal{L} = (\mathrm{Fm}, \vdash)$,  any conditional permission system $P\subseteq \mathrm{Fm}\times\mathrm{Fm}$, any rule $\mathrm{(R)}$, and any nonempty up-directed set $\mathcal{N}^{(R)}_N\subseteq \mathcal{P}(\mathrm{Fm}\times\mathrm{Fm})$ ($\mathcal{P}$ being the powerset operator) such that every element $H$ of $\mathcal{N}^{(R)}_N$ is closed under $\mathrm{(R)}$, cross-coherent with $P$, and such that $N\subseteq H$, the {\em dynamic positive permission system} $E^{\mathrm{(R)}}(P, N, \mathcal{N})$ 
is defined  as follows:

\[E^{(R)}(P,N,\mathcal{N}) = \bigcap\{P_H\ |\ H \in \mathcal{N}^{(R)}_N\},\]

and for any $1\leq i\leq 3$,  
 the {\em generalized dynamic positive permission system} $E_{i}(P, N,\mathcal{N})$ 
is defined  as follows:

\[E_{i}(P,N,\mathcal{N}) = \bigcap\{P_H\ |\ H \in \mathcal{N}^{i}_N\}.\]
\end{definition}
Informally, the set $\mathcal{N}_N^{\mathrm{(R)}}$ represents a given space of possible $\mathrm{(R)}$-closed expansions of the normative system $N$  which are cross-coherent with $P$. The  condition that $\mathcal{N}_N^{\mathrm{(R)}}$ be up-directed corresponds to the requirement that the order in which new norms are added does not affect the result. 
\begin{proposition} 
\label{prop:generalized dynamic perm}
	 For any input/output logic $\mathbb{L} = (\mathcal{L}, N)$, any $P\subseteq \mathrm{Fm}\times\mathrm{Fm}$, any rule $\mathrm{(X) \in \{(\top), (\bot),}$ $\mathrm{  (SI), (WO), (CT)}\}$, and any $\mathcal{N}^{\mathrm{\mathrm{(X)}}}_N$ as in Definition \ref{def:dynpermE}, let  $ (E^{\mathrm{(X)}}(P, N,\mathcal{N}))^c: = (\mathrm{Fm}\times \mathrm{Fm})\setminus E^{\mathrm{(X)}}(P, N,\mathcal{N})$,

  \begin{enumerate}
      \item $(E^{\mathrm{(X)}}(P, N,\mathcal{N}))^c$ is closed under $\mathrm{(X)}^{\wt}$. 
      \item If $\mathcal{L}$ satisfies $\wedge_P$ and $\vee_S$, then $(E^{\mathrm{(AND)}}(P, N,\mathcal{N}))^c$ is closed under $\mathrm{(AND)}^{\wt}$.
      \item If $\mathcal{L}$ satisfies $\vee_S$, then $(E^{\mathrm{(WO),(OR)}}(P, N,\mathcal{N}))^c$ is closed under $\mathrm{(OR)}^{\wt}$.
  \end{enumerate}
\end{proposition}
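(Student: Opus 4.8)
The plan is to reduce everything to the per-system results of Proposition \ref{prop:negperm_list} by exploiting that $E^{\mathrm{(X)}}(P, N,\mathcal{N})$ is an \emph{intersection}, so that its complement is a \emph{union}:
\[(E^{\mathrm{(X)}}(P, N,\mathcal{N}))^c = \bigcup\{P_H^c \mid H \in \mathcal{N}^{\mathrm{(X)}}_N\}.\]
Every $H \in \mathcal{N}^{\mathrm{(X)}}_N$ satisfies $N \subseteq H$ and is closed under $\mathrm{(X)}$, so the relevant items of Proposition \ref{prop:negperm_list} already guarantee that each individual $P_H^c$ is closed under $\mathrm{(X)}^{\wt}$. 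The whole argument then amounts to transferring closure from the individual complements $P_H^c$ to their union, and the nature of this transfer depends on how many relational premises the rule has.

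First I would dispose of the single-premise rules and axioms of item 1 (with $\top_W$ and $\bot_P$ tacitly in force for the $\top,\bot$-connectives). For the axioms $\mathrm{(\top)}^{\wt}$ and $\mathrm{(\bot)}^{\wt}$, nonemptiness of $\mathcal{N}^{\mathrm{(X)}}_N$ lets me pick any $H$; since $H$ is closed under $\mathrm{(\top)}$ (resp.~$\mathrm{(\bot)}$) it contains $(\top, \top)$ (resp.~$(\bot, \bot)$), so by Proposition \ref{prop:negperm_list}(1) (resp.~(3)) the required conclusion $(\top, \bot)$ (resp.~$(\bot, \top)$) already lies in $P_H^c$, hence in the union. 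For the single-relational-premise rules $\mathrm{(SI)}^{\wt}$ and $\mathrm{(WO)}^{\wt}$, a premise lying in the union lies in some single $P_{H_0}^c$, and closure of $P_{H_0}^c$ under the rule (Proposition \ref{prop:negperm_list}(6) and (5), respectively) places the conclusion in $P_{H_0}^c \subseteq (E^{\mathrm{(X)}}(P,N,\mathcal{N}))^c$. The one case here requiring care is $\mathrm{(CT)}^{\wt}$, whose premises are $(\alpha, \varphi) \in N$ and $(\alpha \wedge \varphi, \psi) \in (E^{\mathrm{(CT)}}(P,N,\mathcal{N}))^c$: I would locate $H_0$ with $(\alpha \wedge \varphi, \psi) \in P_{H_0}^c$, observe that $N \subseteq H_0$ turns the side premise $(\alpha, \varphi) \in N$ into $(\alpha, \varphi) \in H_0$, and then invoke closure of $P_{H_0}^c$ under $\mathrm{(CT)}^{\wt}$ (Proposition \ref{prop:negperm_list}(12), applicable since $H_0$ is closed under $\mathrm{(CT)}$) to conclude $(\alpha, \psi) \in P_{H_0}^c$.

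The genuinely new ingredient, and the main obstacle, is items 2 and 3, whose rules $\mathrm{(AND)}^{\wt}$ and $\mathrm{(OR)}^{\wt}$ have \emph{two} relational premises that a priori may be witnessed by different members of $\mathcal{N}^{\mathrm{(X)}}_N$, so the per-$H$ closure results do not apply directly. Here I would use the up-directedness hypothesis: given $(\alpha, \varphi) \in P_{H_1}^c$ and $(\alpha, \psi) \in P_{H_2}^c$ (resp.~$(\alpha, \varphi) \in P_{H_1}^c$ and $(\beta, \varphi) \in P_{H_2}^c$), pick $H_3 \in \mathcal{N}^{\mathrm{(X)}}_N$ with $H_1, H_2 \subseteq H_3$. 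By Proposition \ref{prop:negperm_incl}, $H_i \subseteq H_3$ gives $P_{H_3} \subseteq P_{H_i}$, i.e.~$P_{H_i}^c \subseteq P_{H_3}^c$, so both premises now lie in the single complement $P_{H_3}^c$. Since $H_3$ is closed under $\mathrm{(AND)}$ (resp.~under both $\mathrm{(WO)}$ and $\mathrm{(OR)}$), Proposition \ref{prop:negperm_list}(8) under $\wedge_P$ and $\vee_S$ (resp.~Proposition \ref{prop:negperm_list}(10) under $\vee_S$) shows $P_{H_3}^c$ is closed under the two-premise rule, placing the conclusion in $P_{H_3}^c \subseteq (E^{\mathrm{(X)}}(P,N,\mathcal{N}))^c$. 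The decisive point throughout is the interplay between up-directedness and the antitonicity of $N \mapsto P_N$ recorded in Proposition \ref{prop:negperm_incl}: this is precisely what merges two separately-witnessed premises into a single witness, so that the per-$H$ closure results transfer verbatim.
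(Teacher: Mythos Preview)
Your proof is correct and follows essentially the same approach as the paper: reduce to the per-$H$ closure results of Proposition~\ref{prop:negperm_list} via the union decomposition of $(E^{\mathrm{(X)}}(P,N,\mathcal{N}))^c$, handling the axioms and single-premise rules directly and using up-directedness together with Proposition~\ref{prop:negperm_incl} to merge witnesses for the two-premise rules $\mathrm{(AND)}^\wt$ and $\mathrm{(OR)}^\wt$. You are in fact more explicit than the paper about the $\mathrm{(CT)}^\wt$ case, correctly noting that the side premise $(\alpha,\varphi)\in N$ transfers to $H_0$ via $N\subseteq H_0$ so that Proposition~\ref{prop:negperm_list}(\ref{item:closure under ctwt}) applies with $H_0$ in place of $N$; the paper simply remarks that this case is ``proven similarly''.
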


\begin{proof}
     1. Since $\mathcal{N}^{\mathrm{(X)}}_N$ is not empty, we trivially have $E^{\mathrm{(X)}}(P,N,\mathcal{N}) \subseteq P_{N^{\mathrm{(X)}}}$, that is, $P^c_{N^{\mathrm{(X)}}} \subseteq (E^{\mathrm{(X)}}(P,N,\mathcal{N}))^c$. Hence, the statement holds for $\mathrm{\mathrm{(X)}} \in \{(\bot),(\top)\}$.
    To prove the statement for $\mathrm{(X)}: = \mathrm{(SI)}$, let $\alpha, \beta,\varphi\in \mathrm{Fm}$ s.t.~$\alpha\vdash\beta$ and $(\beta, \varphi) \notin E^{\mathrm{(SI)}}(P,N,\mathcal{N})$, i.e.~$(\beta,\varphi) \notin P_H$ for some $H$ in $\mathcal{N}^{\mathrm{(SI)}}_N$. By  Proposition \ref{prop:negperm_list}.\ref{item:closure under siwt}, $P^c_H$ is closed under $\mathrm{(SI)}^\wt$, hence  $(\alpha, \varphi) \notin P_H$. This implies that $(\alpha, \varphi) \notin E^{\mathrm{(SI)}}(P,N,\mathcal{N})$, as required. The cases in which $\mathrm{(X)}: = \mathrm{(WO)}$ and  $\mathrm{(X)}: =\mathrm{(CT)}$  are proven similarly, using Proposition \ref{prop:negperm_list}.\ref{item:closure under wowt}  and \ref{prop:negperm_list}.\ref{item:closure under ctwt}. 
    
    2. Let $\alpha, \varphi, \psi\in \mathrm{Fm}$ s.t.~$(\alpha,\varphi)$ and $(\alpha,\psi) \notin E^{\mathrm{(AND)}}(P,N,\mathcal{N})$, i.e.~$(\alpha,\varphi) \notin P_{H_1}$ and $(\alpha,\psi) \notin P_{H_2}$ for some $H_1,H_2 \in \mathcal{N}^{\mathrm{(AND)}}_N$. Since $\mathcal{N}^{\mathrm{(AND)}}_N$ is up-directed, some $H \in \mathcal{N}^{\mathrm{(AND)}}_N$ exists s.t.~$H_1,H_2\subseteq H$. By  Proposition \ref{prop:negperm_incl} we have $(\alpha,\varphi),(\alpha,\psi) \notin P_H$, which implies that $(\alpha,\varphi \vee\psi) \notin P_H$, since $P_H^c$ is closed under $\mathrm{(AND)^\wt}$ by Proposition \ref{prop:negperm_list}.\ref{item:closure under andwt}. This shows that $(\alpha, \varphi\vee\psi) \notin E^{\mathrm{(AND)}}(P,N,\mathcal{N})$, as required.

    3. Similar to the previous item, using Proposition \ref{prop:negperm_list}.\ref{item:closure under orwt}.
\end{proof}

\begin{corollary}
For any input/output logic $\mathbb{L} = (\mathcal{L}, N)$ such that $\wedge_P$ and $\vee_S$ hold\footnote{In fact, the properties of $E_1$ do not require $\vee_S$.} for $\mathcal{L}$, any $P\subseteq \mathrm{Fm}\times\mathrm{Fm}$ and any $1\leq i\leq 3$, the relative set-theoretic complement of $E^{i}(P, N,\mathcal{N})$  is closed under the rules indicated in the following table:

\begin{center}
	\begin{tabular}{ l l}
		\hline
		$(E_{i}(P, N))^c$ & Rules   \\
		\hline	
		$E_{ 1}^c$  & $ \mathrm{(\top)^\wt, (SI)^\wt, (WO)^\wt, (AND)^\wt}$   \\
		$E_{ 2}^c$  & $\mathrm{ (\top)^\wt, (SI)^\wt, (WO)^\wt, (AND)^\wt, (OR)^\wt}$  \\
		$E_{ 3}^c$  & $\mathrm{ (\top)^\wt, (SI)^\wt, (WO)^\wt, (CT)^\wt}$  \\
		\hline
	\end{tabular}
\end{center}  
\end{corollary}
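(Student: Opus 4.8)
The plan is to read this corollary off directly from Proposition~\ref{prop:generalized dynamic perm}, specialising the single-rule families $\mathcal{N}^{\mathrm{(R)}}_N$ appearing there to the composite families $\mathcal{N}^i_N$ underlying $E_i(P,N,\mathcal{N})$. The point to fix at the outset is that, for each $i$, every member $H\in\mathcal{N}^i_N$ is \emph{simultaneously} closed under all the rules listed in the $N_i$-row of Table~\ref{table2}, while $\mathcal{N}^i_N$ itself remains nonempty, up-directed, and consists of cross-coherent extensions of $N$, exactly as demanded in Definition~\ref{def:dynpermE}. Consequently, for any individual rule $\mathrm{(R)}$ occurring in that row, $\mathcal{N}^i_N$ is in particular an admissible choice of family for the corresponding item of Proposition~\ref{prop:generalized dynamic perm}.

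First I would dispatch $\mathrm{(\top)}^\wt$, $\mathrm{(SI)}^\wt$, $\mathrm{(WO)}^\wt$ and $\mathrm{(CT)}^\wt$ using item~1 of that proposition: since $N_1$, $N_2$, $N_3$ all contain $\mathrm{(\top)}$, $\mathrm{(SI)}$, $\mathrm{(WO)}$, and $N_3$ additionally contains $\mathrm{(CT)}$, each relevant $H$ is closed under the single rule being handled, so that argument applies unchanged. This already secures the entire $E_3^c$ row, together with the $\mathrm{(\top)}^\wt,\mathrm{(SI)}^\wt,\mathrm{(WO)}^\wt$ entries of $E_1^c$ and $E_2^c$; what remains is $\mathrm{(AND)}^\wt$ for $E_1^c$ and $E_2^c$, and $\mathrm{(OR)}^\wt$ for $E_2^c$. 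For $\mathrm{(AND)}^\wt$ I would invoke item~2, whose hypotheses $\wedge_P$ and $\vee_S$ are precisely the standing assumptions of the corollary, using that every $H$ is $\mathrm{(AND)}$-closed and that up-directedness of $\mathcal{N}^i_N$ furnishes the common bound $H\supseteq H_1,H_2$ exploited in that proof. For $\mathrm{(OR)}^\wt$ I would invoke item~3, whose hypotheses ($\vee_S$ together with closure of each $H$ under $\mathrm{(WO)}$ and $\mathrm{(OR)}$) are met since both rules belong to $N_2$.

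The step I expect to require the most care is the justification that a proposition phrased for families closed under a \emph{single} rule (or the single pair $\mathrm{(WO)},\mathrm{(OR)}$) transfers to the families $\mathcal{N}^i_N$, whose members carry the full $N_i$-closure. The obstacle is one of hygiene rather than of substance: one must verify that the proofs of Proposition~\ref{prop:generalized dynamic perm} never use that $H$ is closed under \emph{only} the named rule, but only the positive facts ``$H$ is closed under $\mathrm{(R)}$'', ``$N\subseteq H$'', ``$H$ is cross-coherent with $P$'', and ``$\mathcal{N}$ is up-directed'', each of which is inherited by $\mathcal{N}^i_N$. Granting this, the three rows of the table are assembled merely by collecting the conclusions of the relevant items, with no further computation. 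I would finally flag, in line with the corollary's footnote, that for $E_1$ the hypothesis $\vee_S$ is in fact dispensable: there the $\mathrm{(AND)}^\wt$ step can be handled separately rather than through item~2, but this refinement is not needed to establish the table as stated.
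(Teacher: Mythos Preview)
Your proposal is correct and is exactly the intended reading of the corollary: the paper states it without proof, immediately after Proposition~\ref{prop:generalized dynamic perm}, as a direct specialisation of that proposition to the families $\mathcal{N}^i_N$. Your observation that the arguments in Proposition~\ref{prop:generalized dynamic perm} use only the \emph{positive} facts that each $H$ is closed under the relevant rule(s), contains $N$, is cross-coherent with $P$, and that the family is nonempty and up-directed---and never that $H$ is closed under \emph{only} the named rule---is precisely the point that licenses the transfer, and you have identified which item covers which table entry accurately.
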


\section{Conclusions}
\label{sec:conclusions}
\paragraph{Results of the present paper.} The present paper further develops the line of research  initiated in  \cite{wollic22}, where normative systems  on selfextensional logics have been introduced and studied from a semantic perspective in connection with subordination algebras \cite{subordination}. In the present paper, the framework of normative systems on selfextensional logics is extended to various notions of permission systems, namely negative, dual negative, static positive, and dynamic positive permission systems, and  their associated closure properties  are studied in connection with the metalogical properties of selfextensional logics. 
\paragraph{Additional rules.} In the present paper, we have focused our attention on the best known closure rules of normative systems, and their direct counterparts applied to the relative complements of permission systems. Moreover, in  Remark \ref{rem:additional rules} and at the end of Section \ref{sec:normative on selfextensional}, we have briefly mentioned weaker variations of these rules and discussed the possibility of generalizing their study to the setting of selfextensional logics. A natural direction is to systematically explore these closure rules both from a syntactic and a semantic perspective.  
\paragraph{Characterizations of output operators.}
In \cite{Makinson00}, various output operators associated with normative systems are characterized both in terms of their being closed under syntactic rules, and in terms of various set-theoretic constructions.  In the present paper, various similar characterizations or sufficient  conditions are introduced (cf.~Propositions \ref{prop:negperm_list}, \ref{prop:dual negperm_list}, \ref{prop:properties positive}, \ref{prop:generalized dynamic perm}) for static positive permission systems and for the relative set-theoretic complements of negative permission systems, positive dynamic permission systems, and the newly introduced notion of dual negative permission systems. These characterizations or sufficient conditions are formulated in terms of closure under syntactic rules.  
In the companion paper \cite{part2}, this syntax-driven approach is complemented by the  semantic approach described in the next paragraph.
\paragraph{Modal characterization of syntactic rules.} 
In \cite{wollic22}, the study of the properties of normative systems in connection with their semantic interpretation on subordination algebras led to their correspondence-theoretic (cf.~\cite{de2021slanted}) characterization in terms of the algebraic validity of modal axioms encoding properties of their associated output operators. The results in \cite{wollic22} cover a finite number of  conditions which reflect well known closure properties of normative systems. A natural direction is to generalize these results to infinite syntactic classes of closure properties. This is the focus of the companion paper \cite{part3}, currently in preparation.

\bibliographystyle{abbrv}
\bibliography{obligation}

\begin{thebibliography}{10}

\bibitem{subordination}
P.~Aleksandrov and V.~Ponomarev.
\newblock Compact extensions of topological spaces (russian).
\newblock {\em Vestnik Moskov. Univ. Ser. Mat. Meh. Astr. Fiz. Him.}, 5:93--108, 1959.

\bibitem{almeida09}
A.~Almeida.
\newblock Canonical extensions and relational representations of lattices with negation.
\newblock {\em Studia Logica}, 91:171--199, 2009.

\bibitem{avron2020}
A.~Avron.
\newblock The normal and self-extensional extension of {Dunn-Belnap} logic.
\newblock {\em Logica Universalis}, 14(3):281--296, 2020.

\bibitem{blyth}
T.~Blyth.
\newblock {\em Lattices and Ordered Algebraic Structures}.
\newblock Springer London, 2005.

\bibitem{chellas80}
B.~F. Chellas.
\newblock {\em Modal Logic: An Introduction}.
\newblock Cambridge University Press, 1980.

\bibitem{Chiara02}
M.~L.~D. Chiara and R.~Giuntini.
\newblock {\em Quantum Logics}, pages 129--228.
\newblock Springer Netherlands, Dordrecht, 2002.

\bibitem{ciabattoni2023streamlining}
A.~Ciabattoni and D.~Rozplokhas.
\newblock Streamlining input/output logics with sequent calculi.
\newblock {\em arXiv preprint arXiv:2306.09496}, 2023.

\bibitem{conradie2019algorithmic}
W.~Conradie and A.~Palmigiano.
\newblock Algorithmic correspondence and canonicity for non-distributive logics.
\newblock {\em Annals of Pure and Applied Logic}, 170(9):923--974, 2019.

\bibitem{conradie2020constructive}
W.~Conradie and A.~Palmigiano.
\newblock Constructive canonicity of inductive inequalities.
\newblock {\em Logical Methods in Computer Science}, 16:1–39, 2020.

\bibitem{conradie2020non}
W.~Conradie, A.~Palmigiano, C.~Robinson, and N.~Wijnberg.
\newblock Non-distributive logics: from semantics to meaning.
\newblock In A.~Rezus, editor, {\em Contemporary Logic and Computing}, volume~1 of {\em Landscapes in Logic}, pages 38--86. College Publications, 2020.

\bibitem{part3}
A.~De~Domenico, A.~Farjami, K.~Manoorkar, A.~Palmigiano, M.~Panettiere, and X.~Wang.
\newblock Correspondence and inverse correspondence for subordination algebras and precontact algebras.
\newblock In preparation.

\bibitem{part2}
A.~De~Domenico, A.~Farjami, K.~Manoorkar, A.~Palmigiano, M.~Panettiere, and X.~Wang.
\newblock Obligations and permissions, algebraically.
\newblock In preparation.

\bibitem{wollic22}
A.~De~Domenico, A.~Farjami, K.~Manoorkar, A.~Palmigiano, M.~Panettiere, and X.~Wang.
\newblock Subordination algebras as semantic environment of input/output logic.
\newblock In {\em International Workshop on Logic, Language, Information, and Computation}, pages 326--343. Springer, 2022.

\bibitem{de2021slanted}
L.~De~Rudder and A.~Palmigiano.
\newblock Slanted canonicity of analytic inductive inequalities.
\newblock {\em ACM Transactions on Computational Logic (TOCL)}, 22(3):1--41, 2021.

\bibitem{dunn95}
J.~M. Dunn.
\newblock Positive modal logic.
\newblock {\em Studia Logica: An International Journal for Symbolic Logic}, 55(2):301--317, 1995.

\bibitem{font2017general}
J.~M. Font and R.~Jansana.
\newblock {\em A general algebraic semantics for sentential logics}, volume~7.
\newblock Cambridge University Press, 2017.

\bibitem{font2003survey}
J.~M. Font, R.~Jansana, and D.~Pigozzi.
\newblock A survey of abstract algebraic logic.
\newblock {\em Studia Logica}, 74(1):13--97, 2003.

\bibitem{GJKO}
N.~Galatos, P.~Jipsen, T.~Kowalski, and H.~Ono.
\newblock {\em Residuated Lattices: An Algebraic Glimpse at Substructural Logics}.
\newblock Elsevier, 2007.

\bibitem{Girard}
J.-Y. Girard.
\newblock Linear logic.
\newblock {\em Theoretical Computer Science}, 50(1):1--101, 1987.

\bibitem{goldblatt74}
R.~I. Goldblatt.
\newblock Semantic analysis of orthologic.
\newblock {\em Journal of Philosophical Logic}, 3(1/2):19--35, 1974.

\bibitem{jansana2007implication}
R.~Jansana.
\newblock Selfextensional logics with implication.
\newblock In J.-Y. Beziau, editor, {\em Logica Universalis}, pages 65--88, Basel, 2005. Birkh{\"a}user Basel.

\bibitem{Jansana2006conjunction}
R.~Jansana.
\newblock Selfextensional logics with a conjunction.
\newblock {\em Studia Logica}, 84(1):63--104, 2006.

\bibitem{jansana2006referential}
R.~Jansana and A.~Palmigiano.
\newblock Referential semantics: duality and applications.
\newblock {\em Reports Math. Log.}, 41:63--93, 2006.

\bibitem{Makinson00}
D.~Makinson and L.~van~der Torre.
\newblock Input/output logics.
\newblock {\em Journal of Philosophical Logic}, 29(4):383--408, 2000.

\bibitem{Makinson03}
D.~Makinson and L.~van~der Torre.
\newblock Permission from an input/output perspective.
\newblock {\em Journal of Philosophical Logic}, 32(4):391--416, 2003.

\bibitem{demorgan}
A.~Monteiro.
\newblock {Matrices de Morgan caract{\'e}ristiques pour le calcul propositionnel classique}.
\newblock {\em Anais da Academia Brasileira de Ciencias}, 32:1--7, 1960.

\bibitem{intuitionistic}
J.~Moschovakis.
\newblock {Intuitionistic Logic}.
\newblock In E.~N. Zalta and U.~Nodelman, editors, {\em The {Stanford} Encyclopedia of Philosophy}. Metaphysics Research Lab, Stanford University, {S}ummer 2023 edition, 2023.

\bibitem{deon2020}
M.~Olszewski, X.~Parent, and L.~van~der Torre.
\newblock Input/output logic with a consistency check - the case of permission.
\newblock In F.~Liu, A.~Marra, P.~Portner, and F.~V.~D. Putte, editors, {\em Deontic Logic and Normative Systems - 15th International Conference, {DEON} 2020/21, Munich, Germany [virtual], July 21-24, 2021}, pages 358--375. College publications, 2021.

\bibitem{OlPaTo23}
M.~Olszewski, X.~Parent, and L.~van~der Torre.
\newblock Permissive and regulative norms in deontic logic.
\newblock {\em Journal of Logic and Computation}, 2023.

\bibitem{parent2014intuitionistic}
X.~Parent, D.~Gabbay, and L.~v.~d. Torre.
\newblock Intuitionistic basis for input/output logic.
\newblock In {\em David Makinson on Classical Methods for Non-Classical Problems}, pages 263--286. Springer, 2014.

\bibitem{Rauszer1974}
C.~Rauszer.
\newblock Semi-boolean algebras and their applications to intuitionistic logic with dual operations.
\newblock {\em Fundamenta Mathematicae}, 83(3):219--249, 1974.

\bibitem{sankappanavar}
H.~P. Sankappanavar.
\newblock Semi-de morgan algebras.
\newblock {\em The Journal of Symbolic Logic}, 52(3):712--724, 1987.

\bibitem{stolpe2015concept}
A.~Stolpe.
\newblock A concept approach to input/output logic.
\newblock {\em Journal of Applied Logic}, 13(3):239--258, 2015.

\bibitem{sun2018proof}
X.~Sun.
\newblock Proof theory, semantics and algebra for normative systems.
\newblock {\em Journal of logic and computation}, 28(8):1757--1779, 2018.

\bibitem{plato08}
M.~van Atten.
\newblock {The Development of Intuitionistic Logic}.
\newblock In E.~N. Zalta, editor, {\em The {Stanford} Encyclopedia of Philosophy}. Metaphysics Research Lab, Stanford University, {S}ummer 2022 edition, 2022.

\bibitem{wojcicki2003logic}
R.~W{\'o}jcicki.
\newblock A logic is referential iff it is selfextensional.
\newblock {\em Studia Logica}, 73:323--335, 2003.

\bibitem{super}
M.~Zakhar'yashchev.
\newblock Syntax and semantics of superintutionistic logics.
\newblock {\em Algebra and Logic}, 28(4):262--282, 1989.

\end{thebibliography}

\end{document}